\newtheorem{thm}{Theorem}[section]
\newtheorem{cor}[thm]{Corollary}
\newtheorem{lemma}[thm]{Lemma}
\newtheorem{defn}[thm]{Definition}
\newtheorem{preremark}[thm]{Remark}
\newenvironment{remark}{\begin{preremark}\rm}{\medskip \end{preremark}}
\numberwithin{equation}{section}
\newcommand\res{\hbox{ {\vrule height .22cm}{\leaders\hrule\hskip.2cm} } }
\newcommand{\R}{\mathbb R}
\newcommand{\dist} {\mathrm{dist}}
\def\Xint#1{\mathchoice
                 {\XXint\displaystyle\textstyle{#1}}%
                 {\XXint\textstyle\scriptstyle{#1}}%
                 {\XXint\scriptstyle\scriptscriptstyle{#1}}%
                 {\XXint\scriptscriptstyle\scriptscriptstyle{#1}}%
                 \!\int}
\def\XXint#1#2#3{{\setbox0=\hbox{$#1{#2#3}{\int}$}
       \vcenter{\hbox{$#2#3$}}\kern-.5\wd0}}
\newcommand{\meanbar}[1]{%
\setbox0 = \hbox{$#1 \int$}
\hbox to 0pt{%
\thinspace
\hskip 0.1\wd0
\raise 0.5\ht0
\hbox{%
\lower 0.5\dp0
\hbox{\rule{0.8\wd0}{2\linethickness}}
}%
\hss
}%
}
\newcommand{\palette}[1]{%
\mathchoice{#1 \displaystyle}%
{#1 \textstyle}%
{#1 \scriptstyle}%
{#1 \scriptscriptstyle}%
}
\newcommand{\mean}{\palette \meanbar}
\newcommand{\mint}{\mean \int}
\begin{document}
\title{Divergence form operators in Reifenberg flat domains.}

\author{Emmanouil Milakis\footnote{email: milakis@math.washington.edu}\hspace{0.2em} and Tatiana Toro\footnote{email: toro@math.washington.edu}\footnote{The second author was partially supported by NSF grant DMS-0600915.}\\
University of Washington\\
Department of Mathematics\\
P.O Box 354350 Seattle, WA 98195, USA }
\date{}
\maketitle

\begin{abstract} We study the boundary regularity of solutions of elliptic operators in divergence form with $C^{0,\alpha}$ coefficients or operators which are small perturbations of the Laplacian in non-smooth domains. We show that, as in the case of the Laplacian, there exists a close relationship between the regularity of the corresponding elliptic measure and the geometry of the domain.
\\ \\
AMS Subject Classifications: 35J25, (31B05) \\ \textbf{Keywords}:
Reifenberg flat domain, Chord arc domain, Elliptic measure.
\end{abstract}

\section{Introduction}
\renewcommand{\thesection}{\arabic{section}}

The basic aim of this paper is to study the relationship between
the elliptic measure of operators in divergence form with $C^{0,\alpha}$ coefficients or operators which are small perturbations of the Laplacian, and
the geometry of the boundary of the domain.
We concentrate on domains whose boundary is {\it locally flat},
where this notion will be understood in a weak sense.
Let $\Omega\subset\R^{n+1}$ be an open set. Loosely speaking we say that
$\partial\Omega$ is locally flat if locally it can be well approximated
by affine spaces. In particular,  such domains
are non-tangentially accessible and therefore their
elliptic measure $\omega$ is doubling
(see \cite{jk82}, \cite{k1}).

We prove that if $\partial\Omega$ is well approximated
by $n$-planes in the Hausdorff distance sense then the doubling constant
of the elliptic measure of divergence form operators with H\"older coefficients, $\omega$,
asymptotically approaches the doubling constant of the
$n$-dimensional Lebesgue measure. If moreover the unit normal vector to
$\partial\Omega$ has small (resp. vanishing) mean oscillation, then the logarithm of the corresponding elliptic kernel has small (resp. vanishing) mean oscillation.

In \cite{d1}, Dahlberg showed that if $\Omega$ is Lipschitz domain
then the harmonic measure and the surface measure are mutually
absolutely continuous. In addition the Poisson kernel
is a $B_2$ weight with respect to the surface measure to the boundary, which
implies that
the logarithm of the Poisson kernel is a function of bounded mean
oscillation with respect to the surface measure on $\partial
\Omega$ (i.e it is in $BMO(\partial\Omega$)). Jerison and Kenig \cite{jk82}, showed that if $\Omega$ is
a $C^1$ domain then the logarithm of the Poisson kernel is in
$VMO(\partial \Omega)$. In the case when the boundary is
locally flat, Kenig and Toro gave a full description of the relation between the harmonic measure of
the domain and the geometry of its boundary (see \cite{kt1}). More
precisely if $\Omega\subset \R^{n+1}$ is an open connected
Reifenberg flat domain with vanishing constant then the harmonic
measure behaves asymptotically like the Euclidean measure. If
$\Omega$ is a chord arc domain with vanishing constant then the
logarithm of the Poisson kernel has vanishing mean oscillation (i.e. it is in $VMO{(\partial \Omega)}$).

The regularity of the elliptic kernel for divergence form operators which are perturbations of the Laplacian
on Lipschitz domains has been studied by several authors.
Dahlberg \cite{d}, showed that if the difference between the coefficients of an elliptic, divergence form operator
$L$, and the Laplacian satisfies a Carleson condition with vanishing trace then the corresponding elliptic
kernel is a $B_2$ weight with respect to surface measure.
In \cite{fkp}, Fefferman, Kenig and Pipher studied the case when the same Carleson condition is satisfied but without the smallness assumption. In that case, the elliptic measure of $L$  is an $A_\infty$ weight with respect to surface measure.
In \cite{esc1} Escauriaza proved that on a $C^1$ domain if the difference between the coefficients of
$L$ and the Laplacian satisfies a Carleson condition with vanishing trace then the logarithm  of the elliptic
kernel is in $VMO(\partial\Omega)$.

In this paper we extend the results of \cite{kt1} to more general uniformly elliptic operators in divergence form.
In section 2 we present the preliminaries, define the two classes of operators we intend to study and
state our main results.
In section \ref{optimalDoubling} we prove that the elliptic measure of a divergence form elliptic operator with H\"older coefficients on a Reifenberg flat domain with vanishing constant is asymptotically optimally doubling.  The proofs in
this section follow the arguments presented in \cite{kt1}.
In section \ref{chord} we show that, in a chord arc domain with vanishing constant,  the logarithm of the corresponding elliptic kernel is in $VMO$.  In section \ref{chord}, we also
extend some of the results  in \cite{fkp} to chord arc domains with small constant.
A natural question is whether Escauriaza's  result (see \cite{esc1})
generalizes  to chord arc domains with vanishing constant. We expect this to be the case.

\section{Preliminaries and results.}
\renewcommand{\thesection}{\arabic{section}}

In this section we recall some definitions and state our main results. First we introduce the class of Reifenberg flat domains, which are domains whose boundary can be well approximated by planes.
In particular Lipschitz domains with small constant are Reifenberg flat.
\begin{defn}
Let $\Omega \subset \R^{n+1}$ be a bounded domain, we say that $\partial\Omega$
separates $\R^{n+1}$ if there exist $\delta>0$, and $R>0$ such
that for each $Q\in\partial\Omega$, there exist an $n$-dimensional
plane $\mathcal{L}(Q,R)$ containing $Q$ and a choice of unit
normal vector to $\mathcal{L}(Q,R)$, $n_{Q,R}$ satisfying
\begin{equation}\label{1.1}
{T}^{+}(Q,R)=\{X=(x,t)=x+t n_{Q,R}\in B(Q,R): x\in
{\mathcal L}(Q,R),\ t>2\delta R\}\subset  \Omega,
\end{equation}
and
\begin{equation}\label{1.2}
{T}^{-}(Q,R)=\{X=(x,t)=x+t n_{Q,R}\in B(Q,R): x\in
{\mathcal L}(Q,R),\ t<-2\delta R\}\subset  \Omega^{c}.
\end{equation}
\end{defn}
Here $B(Q,R)$ denotes the $(n+1)$-dimensional ball of radius
$R$ and center $Q$.
\begin{defn}
Let $\Omega\subset  \R^{n+1}$, $\delta> 0$, $R>0$. We say that
$\Omega$ is a $(\delta, R)$-Reifenberg flat domain if
$\partial\Omega$ separates $\R^{n+1}$, and for each
$Q\in\partial\Omega$, and for every $r\in (0, R]$ there
exists an $n$-dimensional plane $\mathcal{L}(Q,R)$ containing $Q$ such
that
\begin{equation}\label{1.3}
\frac{1}{r}D[\partial\Omega\cap B(Q,r), \mathcal{L}(Q,r)\cap B(Q,r)]\leq \delta.
\end{equation}
where $D$
denotes the Hausdorff distance.
\end{defn}

We denote by
\begin{equation}\label{1.4}
\theta(r)=\sup_{Q\in \partial\Omega}\inf_{\mathcal{L}}\left\{\frac{1}{r} D[\partial\Omega\cap B(Q,r), \mathcal{L}\cap B(Q,r)]\right\},
\end{equation}
where the infimum is taken over all $n$-planes containing $Q$.

\begin{defn}
Let $\Omega \subset \R^{n+1}$, we say that $\Omega$ is a
Reifenberg flat domain with vanishing constant if it is $(\delta,
R)$-Reifenberg flat for some $\delta>0$ and $R>0$, and
\begin{equation}\label{1.5}
\limsup_{r \rightarrow 0}\theta(r)=0.
\end{equation}
\end{defn}

Note that definitions \ref{1.1} and \ref{1.2} are only significant
for $\delta>0$ small. Thus when talking about $(\delta, R)$-Reifenberg flat domains we assume that $\delta$ is small enough. In particular, we assume that $\delta$ is small enough so that if $\Omega$ is a
$(\delta, R)$ Reifenberg flat domain it is also an NTA domain (see \cite{kt1}).

\begin{defn}
Let $\Omega \subset \R^{n+1}$. We say that
$\Omega$ is a chord arc domain (CAD) if $\Omega$ is an NTA
set of locally finite perimeter such that there exists $C>1$ so that for $r\in (0,{\rm{diam}}\ \Omega)$ and $Q\in\partial \Omega$
\begin{equation}\label{1.7A}
C^{-1}r^n\leq\sigma(B(Q,r))\leq Cr^{n}.
\end{equation}
Here $\sigma=\mathcal{H}^{n}\res \partial\Omega$ and $\mathcal{H}^{n}$ denotes the $n$-dimensional Hausdorff measure.
\end{defn}

\begin{defn}
Let $\Omega \subset \R^{n+1}$, $\delta> 0$ and $R>0$. We say that
$\Omega$ is a $(\delta,R)$-chord arc domain (CAD) if $\Omega$ is a
set of locally finite perimeter such that
\begin{equation}\label{1.6}
\sup_{0<r\leq R}\theta(r)\leq \delta
\end{equation}
and
\begin{equation}\label{1.7}
\sigma( B(Q,r))\leq (1+\delta)\omega_{n}r^{n}\ \ \forall
Q\in\partial\Omega\ \ {\rm{and}} \ \forall r\in (0,R].
\end{equation}
Here $\omega_{n}$ is the volume of the $n$-dimensional unit
ball in $\R^{n}$.
\end{defn}

\begin{defn}
Let $\Omega \subset \R^{n+1}$, we say that $\Omega$ is a chord arc
domain with vanishing constant if it is a $(\delta, R)$-CAD for
some $\delta>0$ and $R>0$,
\begin{equation}\label{1.8}
{\lim\sup}_{r\rightarrow 0}\theta(r)=0
\end{equation}
and
\begin{equation}\label{1.9}
\lim_{r\rightarrow 0}\sup_{Q\in\partial \Omega}\frac{\sigma(B(Q,r))}{\omega_{n}r^{n}} =1.
\end{equation}
\end{defn}

For the purpose of this paper we assume that $\Omega\subset \R^{n+1}$ is a bounded
domain.  We consider elliptic
operators $L$ of the form
\begin{equation}\label{div-tt}
Lu=\textrm{div}(A(X)\nabla u)
\end{equation}
defined in the domain $\Omega$ with symmetric coefficient matrix $A(X)=(a_{ij}(X))$ and such that there
are $\lambda, \Lambda>0$ satisfying
\begin{equation}\label{ellipticity}
\lambda |\xi|^2\leq \sum_{i,j=1}^{n+1}a_{ij}(X)\xi_i\xi_j\leq \Lambda |\xi|^2
\end{equation}
for all $X \in \Omega$ and $\xi \in \R^{n+1}$.

We say that a function $u$ in $\Omega$ is a solution to $Lu=0$ in $\Omega$ provided that $u\in W_{\rm{loc}}^{1,2}(\Omega)$ and for all $\phi\in C^{\infty}_c(\Omega)$
$$\int_{\Omega}\langle A(x)\nabla u,\nabla\phi\rangle dx =0.$$

A domain $\Omega$ is called regular for the operator $L$, if for
every $g\in C(\partial \Omega)$, the generalized solution of the
classical Dirichlet problem  with boundary data $g$ is a function $u\in C(\overline{\Omega})$.

\begin{defn}
Let $\Omega$ be a regular domain for $L$ as above and $g\in C(\partial \Omega)$. For $X\in
\Omega$ consider the linear functional $g \rightarrow u(X)$ on
$C(\partial \Omega)$, where $u$ is the generalized solution of the
classical Dirichlet problem with boundary data $g$. By the Riesz representation theorem,
there exists a family of regular Borel probability measures
$\{\omega^X_L\}_{X\in\Omega}$ such that
$$u(X)=\int_{\partial\Omega}g(Q)d\omega^X_L(Q).$$
For $X\in \Omega$, $\omega^X_L$ is called the $L-$elliptic measure of $\Omega$ with pole $X$. When no
confusion arises, we will omit the reference to $L$ and simply
called it as the elliptic measure.
\end{defn}

To state our results  we introduce two classes of operators.

We say that elliptic operator $L\in \mathcal{L}(\lambda, \Lambda,\alpha)$
if it satisfies (\ref{div-tt}), (\ref{ellipticity}) and the modulus of continuity of the corresponding matrix is given, up to the boundary, by
\begin{equation}\label{modulus}
w(r)=\sup_{|X-Y|\leq r}|A(X)-A(Y)|\leq c_0 r^{\alpha}
\end{equation}
for some $\alpha \in (0,1]$, that is $A\in C^{\alpha}(\overline{\Omega})$. Without loss of generality we assume that $A$ is defined in $\R^{n+1}$ since $A$ can be extended to a new matrix in the following way. If we start with $A\in C^{\alpha}(\overline{\Omega})$ then there exists an open set $U$ such that $\overline{\Omega}\subset U$ and $A\in C^{\alpha}(U)$. Consider now a smooth function $\phi \in C^{\infty}_{c}(\R^{n+1})$ which is equal to $1$ in $\overline{\Omega}$ and $0$ outside $U$. We then extend $A$ to $B=\phi A+(1-\phi)I$ in $\overline{\Omega}$ which gives that $B\in C^{\alpha}(\R^{n+1})$ and $B=A$ in $\overline{\Omega}$.

An elliptic operator $Lu={\rm{div}}(A(X)\nabla u)$ defined on a chord arc domain $\Omega\subset \R^{n+1}$ is a perturbation of the Laplacian for the purposes of this paper
if the deviation function
\begin{equation}\label{eqn:tt-a}
a(X)=\sup\{|\rm{Id}-A(Y)|: Y\in B(X,\delta(X)/2)\}
\end{equation}
where
$\delta(X)$ is the distance of $X$ to $\partial \Omega$, satisfies the following Carleson measure property: there exists $C>0$ such that
\begin{equation}\label{normfkp}
\sup_{0<r<\rm{diam}\Omega}\sup_{Q\in\partial \Omega}
\bigg\{\frac{1}{\sigma(B(Q,r))}\int_{B(Q,r)\cap \Omega}\frac{a^2(X)}{\delta(X)}dX\bigg\}\leq C,
\end{equation}
where $\sigma=\mathcal{H}^{n}\res \partial\Omega$.
Note that in this case $L=\Delta$ on $\partial \Omega$ and therefore by letting $L=\Delta$ in $\Omega^c$
we may assume that $L$ is an elliptic operator in $\R^{n+1}$.

We now state some of our results:

\begin{thm}
Let $\Omega\subset \R^{n+1}$ be a Reifenberg flat domain with
vanishing constant, let $L\in \mathcal{L}(\lambda, \Lambda,\alpha)$ and let $\omega$ be its elliptic measure. Then
for all $\tau\in (0,1)$,
$$
\lim_{\rho\rightarrow 0} \inf_{Q\in \partial \Omega}
\frac{\omega(B(Q, \tau\rho))} {\omega(B(Q, \rho))}=
\lim_{\rho\rightarrow 0} \sup_{Q\in \partial \Omega}
\frac{\omega(B(Q,\tau\rho))} {\omega(B(Q,\rho))}=
\tau^{n}.
$$
\end{thm}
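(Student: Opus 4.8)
The plan is to reduce the statement about $L$-elliptic measure to the corresponding statement for harmonic measure proven in \cite{kt1}, using the fact that operators in $\mathcal{L}(\lambda,\Lambda,\alpha)$ have H\"older coefficients and hence, at small scales, look like constant-coefficient operators, which after a linear change of variables are the Laplacian. More precisely, fix $Q\in\partial\Omega$ and a small radius $\rho$. After translating $Q$ to the origin we replace $A(X)$ by the frozen matrix $A_0=A(Q)$; by \eqref{modulus} we have $|A(X)-A_0|\le c_0|X-Q|^\alpha\le c_0\rho^\alpha$ on $B(Q,\rho)$, so on that ball $L$ is a perturbation of the constant-coefficient operator $L_0=\dv(A_0\nabla\,\cdot\,)$ whose deviation tends to $0$ with $\rho$. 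Since $A_0$ is symmetric and positive definite we can write $A_0=S^TS$ and change variables $Y=SX$; then $L_0$ becomes the Laplacian on the image domain $\widetilde\Omega=S\Omega$. The key geometric observation is that $S$ is a fixed linear isomorphism with $\norm{S},\norm{S^{-1}}$ controlled by $\lambda,\Lambda$, hence a bi-Lipschitz image of a Reifenberg flat domain with vanishing constant is again Reifenberg flat with vanishing constant: the Hausdorff distance in \eqref{1.3} and the function $\theta(r)$ in \eqref{1.4} are distorted only by the bounded factor $\norm{S}\norm{S^{-1}}$ in the constant, but the vanishing property \eqref{1.5} is preserved because $S$ maps $n$-planes to $n$-planes. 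One subtlety: $S$ depends on $Q$, but $\norm{S},\norm{S^{-1}}$ are uniformly bounded, so the vanishing of $\theta$ is uniform in $Q$.

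The second ingredient is a quantitative comparison between $\omega_L$ and $\omega_{L_0}$ (and then $\omega_{L_0}$ with harmonic measure via the change of variables) on the ball $B(Q,\rho)$. Here I would invoke the perturbation theory already referenced in the introduction: when the deviation of the coefficients from a fixed operator satisfies a Carleson condition with small (vanishing) trace, the two elliptic kernels are comparable with constants tending to $1$; quantitatively, on $B(Q,\tau\rho)$ one gets
$$
(1-\eta(\rho))\,\frac{\omega_{L_0}(B(Q,\tau\rho))}{\omega_{L_0}(B(Q,\rho))}\le \frac{\omega_{L}(B(Q,\tau\rho))}{\omega_{L}(B(Q,\rho))}\le (1+\eta(\rho))\,\frac{\omega_{L_0}(B(Q,\tau\rho))}{\omega_{L_0}(B(Q,\rho))},
$$
with $\eta(\rho)\to 0$ as $\rho\to 0$ uniformly in $Q$, because $\sup_{B(Q,\rho)}|A-A_0|\le c_0\rho^\alpha$ forces the Carleson norm of the deviation restricted to $B(Q,\rho)$ to be small. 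Since $L_0$ has constant coefficients, its elliptic measure transforms under $Y=SX$ exactly into the harmonic measure of $\widetilde\Omega=S\Omega$ evaluated on the (slightly distorted, comparable) balls $S(B(Q,\tau\rho))$ and $S(B(Q,\rho))$; comparing these ellipsoidal regions to concentric balls costs another factor $1+o(1)$ as $\rho\to 0$ once one uses that near $Q$ the boundary is $\delta$-flat, so that $S$ acts on $\partial\Omega\cap B(Q,\rho)$ essentially like its restriction to the tangent plane. Then Theorem in \cite{kt1} applied to $\widetilde\Omega$ gives $\lim_{\rho\to0}\sup_{\widetilde Q}\omega_{\widetilde\Omega}(B(\widetilde Q,\tau\rho))/\omega_{\widetilde\Omega}(B(\widetilde Q,\rho))=\tau^n$ and the analogous $\liminf$, and chaining the comparisons yields the claim.

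The main obstacle is making the change-of-variables step honest: $S=S_Q$ depends on the base point, so $\widetilde\Omega=S_Q\Omega$ changes with $Q$, and one must check that the flatness parameters of $S_Q\Omega$ near the image of $Q$ are controlled uniformly and that the distorted balls $S_Q(B(Q,r))$ can be sandwiched between concentric Euclidean balls $B(S_QQ, c^{-1}r)$ and $B(S_QQ, cr)$ with, crucially, $c=c(\rho)\to 1$ — the last point is where the vanishing Reifenberg condition is used, since only on an almost-flat piece of boundary does an ellipsoid look like a ball to leading order. An alternative, and probably cleaner, route that avoids these geometric gymnastics is to bypass $\omega_{L_0}$ and instead run the entire argument of Section \ref{optimalDoubling} — which the authors say follows \cite{kt1} — directly: establish blow-up/compactness for solutions of $Lu=0$ in Reifenberg flat domains, note that any blow-up limit solves a constant-coefficient equation in a half-space (because $w(r)\to 0$), change variables once at the limit to reduce to harmonic functions in a half-space, and conclude $\tau^n$ from the half-space Poisson kernel; I expect the paper takes this second route, and the scaling-invariance of the Reifenberg and Carleson conditions is what makes the compactness step work.
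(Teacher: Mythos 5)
There is a genuine gap in the first (and more developed) route you propose, and the paper does not take either route. The paper's approach is to run the cylinder-comparison machinery of \cite{kt1} directly for $L$, with one new analytic input: Lemma~\ref{lem2.7}, which proves the asymptotically optimal doubling of the $L$-elliptic measure of a half-cylinder $\mathcal{C}^+(Q_0,Ms)$ by writing
$\omega^X(\Delta(Q_1,r_1))=\int_{\Delta(Q_1,r_1)}\langle A(Q)\nabla_Q G(Q,X),\nu\rangle\,d\sigma$
and invoking the Hopf lemma together with the $C^{1,\alpha}$ regularity up to the flat boundary (available precisely because $A\in C^{\alpha}$) to show that the integrand equals $\langle A(Q_0)\nabla_Q G(Q_0,X),\nu\rangle(1+O(r^\alpha))$. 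This replaces the explicit half-space Poisson kernel used for the Laplacian in \cite{kt1}; the remainder of Section~\ref{optimalDoubling} (Lemmata~\ref{lem4.1}--\ref{lem4.3}, Corollary~\ref{cor4.2}, and the proof of Theorem~\ref{Thm4.1}) then follows \cite{kt1} essentially verbatim, using the boundary Harnack principle and Green function comparisons for $L$ on NTA domains. No change of variables, no frozen-coefficient operator $L_0$, and no blow-up/compactness appears.

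The gap in your first route is that the perturbation results you cite (\cite{d}, \cite{feff}, \cite{fkp}, \cite{esc1}) are formulated on Lipschitz or chord arc domains and compare $\omega_L$ to \emph{surface measure} (or conclude $B_q$/$A_\infty$ relative to it). A Reifenberg flat domain with vanishing constant need not be chord arc: its boundary can be fractal with $\mathcal{H}^n(\partial\Omega)=\infty$ locally, so there is no Ahlfors-regular $\sigma$, no meaningful Carleson condition of the form \eqref{normfkp}, and no $A_\infty$ framework. Thus the quantitative comparison $(1\pm\eta(\rho))$ between $\omega_L$ and $\omega_{L_0}$ that you want does not follow from the cited perturbation theory; and even where that theory applies, $k\in B_q$ or $\omega\in A_\infty$ does not by itself give the asymptotically optimal doubling constant $\tau^n$. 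The change-of-variables step is also never carried out in the paper, and your own caveat about $S=S_Q$ depending on $Q$ (so $\widetilde\Omega=S_Q\Omega$ varies) is a real obstruction you do not resolve. Your second route (blow-up, constant-coefficient limit in a half-space) is plausible in spirit, but the paper deliberately avoids compactness and instead proves the cylinder estimate quantitatively via $C^{1,\alpha}$ boundary regularity — that is the one place where the hypothesis $L\in\mathcal{L}(\lambda,\Lambda,\alpha)$ is used in an essential way.
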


In section \ref{chord} we show that if $\Omega$ is a chord arc domain with vanishing constant and
$L\in \mathcal{L}(\lambda, \Lambda,\alpha)$ then $\omega\in A_\infty(d\sigma)$. Furthermore we obtain the following results.

\begin{thm}
Given $\varepsilon>0$, and $\theta>0$ there exists $\delta>0$ such
that if $L\in \mathcal{L}(\lambda, \Lambda,\alpha)$ and
$\Omega\subset \R^{n+1}$ is a $(\delta, R)$-CAD there
exists $r_0>0$, so that for any
$Q \in \partial\Omega$ and $r < r_0$, if
$k(Q)= \frac{d\omega}{d\sigma}(Q)$ denotes the elliptic kernel of $L$, then
$$
\bigg({\Xint-}_{B(Q,r)}k^{1+\beta}d\sigma\bigg)^\frac{1}{(1+\beta)}\leq
(1+\varepsilon){\Xint-}_{B(Q,r)}k d\sigma,
$$
for any $\beta\in (0,1/\theta)$.
\end{thm}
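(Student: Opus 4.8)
\section*{Proof proposal for Theorem 2.2}

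\textbf{Reduction to a small--BMO estimate.} The plan is to reduce the statement to the following claim $(\star)$: for every $\eta>0$ and every $\theta>0$ there exist $\delta>0$ and $r_0>0$ such that if $L\in\mathcal L(\lambda,\Lambda,\alpha)$ and $\Omega$ is a $(\delta,R)$-CAD, then $\|\log k\|_{\mathrm{BMO}(B(Q,r_0))}\le\eta$ with respect to $\sigma$, for every $Q\in\partial\Omega$ (here $k=d\omega/d\sigma$ with a fixed interior pole, whose particular choice only affects $r_0$ by Harnack's inequality). Granting $(\star)$, fix $Q$ and $r<r_0$. The asserted inequality is unchanged under $k\mapsto ck$, so we may assume $\Xint-_{B(Q,r)}\log k\,d\sigma=0$, whence $\Xint-_{B(Q,r)}k\,d\sigma\ge 1$ by Jensen. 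With $f=\log k$ and $p=1+\beta\le 1+1/\theta$, the sharp form of the John--Nirenberg inequality gives, for a dimensional constant $c_n$ and whenever $p\eta<c_n$,
$$\Xint-_{B(Q,r)}e^{pf}\,d\sigma\le 1+C_n\,p\,\eta,$$
so that, using $(1+x)^{1/p}\le 1+x/p$ for $p\ge1$,
$$\Big(\Xint-_{B(Q,r)}k^{1+\beta}\,d\sigma\Big)^{\frac{1}{1+\beta}}\le(1+C_n(1+\beta)\eta)^{\frac{1}{1+\beta}}\le 1+C_n\eta\le(1+C_n\eta)\Xint-_{B(Q,r)}k\,d\sigma.$$
Choosing $\eta=\eta(\varepsilon,\theta,n)$ so small that $C_n\eta\le\varepsilon$ and $(1+1/\theta)\eta<c_n$ finishes the reduction.

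\textbf{Proof of $(\star)$ by compactness.} Suppose $(\star)$ fails for some $\eta_0>0$ and $\theta>0$. Then there are $\delta_j\downarrow 0$, operators $L_j\in\mathcal L(\lambda,\Lambda,\alpha)$ with matrices $A_j$ (same $\alpha$, $c_0$), $(\delta_j,R)$-CAD $\Omega_j$, points $Q_j\in\partial\Omega_j$, radii $\rho_j\downarrow 0$ and $Q_j'\in\partial\Omega_j\cap B(Q_j,\rho_j)$ with
$$\frac{1}{\sigma_j(B(Q_j',\rho_j))}\int_{B(Q_j',\rho_j)}\big|\log k_j-\langle\log k_j\rangle_{B(Q_j',\rho_j)}\big|\,d\sigma_j>\eta_0,$$
where $\omega_j$ is the $L_j$-elliptic measure with a fixed pole $X_0^j$ and $k_j=d\omega_j/d\sigma_j$. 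Rescale: put $\widetilde\Omega_j=\rho_j^{-1}(\Omega_j-Q_j')$ and $\widetilde A_j(X)=A_j(Q_j'+\rho_j X)$; then $\widetilde A_j$ satisfies \eqref{ellipticity} and $|\widetilde A_j(X)-\widetilde A_j(Y)|\le c_0\rho_j^\alpha|X-Y|^\alpha$, so on every fixed ball the modulus of continuity of $\widetilde A_j$ tends to $0$ and, along a subsequence, $\widetilde A_j\to A_\infty$ locally uniformly with $A_\infty$ a constant symmetric matrix satisfying \eqref{ellipticity}. The domains $\widetilde\Omega_j$ are $(\delta_j,R/\rho_j)$-CAD with $\delta_j\downarrow0$ and $R/\rho_j\uparrow\infty$; as in the theory of Reifenberg flat chord arc domains (cf. \cite{kt1}), along a further subsequence $\widetilde\Omega_j\to\Omega_\infty$ and $\partial\widetilde\Omega_j\to\partial\Omega_\infty$ in the local Hausdorff distance with $\Omega_\infty$ a half-space, and $\sigma_j\rightharpoonup\omega_n\,\mathcal H^n\res\partial\Omega_\infty$ by \eqref{1.6}, \eqref{1.7}. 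The $\widetilde\Omega_j$ are NTA with uniform constants, so (interior De Giorgi--Nash--Moser estimates together with boundary barriers valid with uniform constants on NTA domains) solutions of $\widetilde L_ju=0$ converge up to the boundary; since $\widetilde X_0^j=\rho_j^{-1}(X_0^j-Q_j')$ has distance to $\partial\widetilde\Omega_j$ tending to $\infty$, the normalized rescaled measures $\bar\omega_j$ (rescaling of $\omega_j$, normalized by $\omega_j(B(Q_j',\rho_j))$) converge weakly to the elliptic measure of $\Omega_\infty$ for $\mathrm{div}(A_\infty\nabla\cdot)$ with pole at infinity; after the linear change $Y\mapsto A_\infty^{-1/2}Y$ conjugating this operator to the Laplacian and mapping $\Omega_\infty$ to another half-space, this is a constant multiple of surface measure, and the map has constant Jacobian on $\partial\Omega_\infty$. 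Hence $\bar\omega_j\rightharpoonup c_\infty\,\mathcal H^n\res\partial\Omega_\infty$ for some $c_\infty>0$.

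\textbf{Passing the oscillation to the limit.} On a $(\delta,R)$-CAD with $\delta$ small, $\omega_j\in A_\infty(\sigma_j)$ with constants depending only on $n,\lambda,\Lambda,\alpha$ and the (uniform) NTA and Ahlfors--David constants; equivalently $k_j$ satisfies a reverse H\"older inequality $\Xint-_B k_j^{1+\epsilon_0}\,d\sigma_j\le C\big(\Xint-_B k_j\,d\sigma_j\big)^{1+\epsilon_0}$ with $\epsilon_0,C$ uniform in $j$. This forces $\{k_j\}$ to be uniformly integrable with respect to $\sigma_j$ on $B(0,1)$, so $\bar\omega_j\rightharpoonup c_\infty\mathcal H^n\res\partial\Omega_\infty$ and $\sigma_j\rightharpoonup\omega_n\mathcal H^n\res\partial\Omega_\infty$ upgrade to $k_j\to c_\infty/\omega_n$ in $L^1(B(0,1),\sigma_j)$; moreover, $\log k_j$ is bounded in every $L^q(B(0,1),\sigma_j)$ uniformly in $j$ (John--Nirenberg for the $A_\infty$ weight $k_j$, its $B(0,1)$-average being bounded above and below), so in fact $\log k_j\to\log(c_\infty/\omega_n)$ in $L^1(B(0,1),\sigma_j)$. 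In particular the left-hand side of the displayed oscillation inequality, which after rescaling sits at the center of $B(0,1)$ at unit scale, tends to $0$ --- contradicting the bound $\eta_0>0$. This proves $(\star)$, hence the theorem.

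\textbf{Main obstacle.} The crux is Step 2--3: making precise the up-to-the-boundary convergence of solutions, and hence of the (normalized) elliptic measures, on the varying NTA domains $\widetilde\Omega_j$; and verifying that every structural constant --- NTA, Ahlfors--David, ellipticity, and above all the $A_\infty$ constant of $\omega_j$ with respect to $\sigma_j$ --- remains uniform under the rescaling, since it is the resulting uniform reverse H\"older inequality that yields the equi-integrability needed to pass the $L^1$ oscillation of $\log k_j$ to the limit. Correctly tracking the fixed pole so that the blow-up limit is harmonic measure \emph{from infinity} (with constant Poisson kernel), rather than from a corkscrew point (whose Poisson kernel is not constant on $B(0,1)$), is also essential.
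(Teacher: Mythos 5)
Your reduction of the optimal reverse H\"older inequality to a small-BMO estimate (your claim $(\star)$) is correct and clean: normalizing so that $\Xint- \log k=0$, Jensen gives $\Xint- k\geq 1$, and the sharp exponential John--Nirenberg inequality then yields $\big(\Xint- k^{1+\beta}\big)^{1/(1+\beta)}\leq 1+C\eta$ once $(1+\beta)\eta$ is small enough; this is a valid (and somewhat slick) inversion of the route the paper actually takes, since the paper derives the small-BMO statement (its Corollary~\ref{cor5.2}) \emph{from} the reverse H\"older inequality via Sarason's lemma, not the other way around.

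The gap is in the compactness proof of $(\star)$, precisely at the step you yourself flag as the ``crux.'' You claim that $\bar\omega_j\rightharpoonup c_\infty\mathcal{H}^n\res\partial\Omega_\infty$, $\sigma_j\rightharpoonup\mathcal{H}^n\res\partial\Omega_\infty$, together with uniform integrability of $k_j=d\bar\omega_j/d\sigma_j$ (from the uniform reverse H\"older inequality), ``upgrade to $k_j\to c_\infty$ in $L^1(B(0,1),\sigma_j)$.'' This is not true as a measure-theoretic statement: weak-$*$ convergence of both measures plus uniform integrability only gives that the densities converge \emph{weakly} in $L^1$, not strongly; highly oscillating densities (e.g.\ $k_j=1+\tfrac12\sin(jx)$ against Lebesgue measure) satisfy all your hypotheses, converge weakly to a constant, yet have $L^1$-oscillation bounded away from zero — which is exactly the quantity you need to kill to contradict the assumed BMO lower bound. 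So the contradiction is not reached by the stated argument. What is genuinely needed is some PDE-based stability of the Poisson kernel (or of normalized Green's function ratios) under Hausdorff convergence of NTA domains and local uniform convergence of the coefficient matrices, strong enough to control the oscillation of $\log k_j$ at unit scale; this is a substantial additional ingredient that the proposal sketches but does not supply. The paper sidesteps the issue entirely by proving the estimate \emph{quantitatively}: it sandwiches $\partial\Omega$ between Lipschitz graphs via the Semmes decomposition, compares $k$ with the kernel $k_-$ of the approximating Lipschitz domain, and then transfers the optimal reverse H\"older constant from the flat Lipschitz case using Lemma~\ref{lem2.7} (the $C^{1,\alpha}$ boundary estimates on cylinders) together with Lemmata~\ref{lem4.1}--\ref{lem4.3} and Corollary~\ref{cor4.2}; that chain of inequalities is the quantitative counterpart of your blow-up, and it makes every constant explicit precisely because it never passes to a limit.

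Two smaller points: the weak limit of $\sigma_j$ should be $\mathcal{H}^n\res\partial\Omega_\infty$ (which already equals $\omega_n r^n$ on balls of radius $r$ centered on the hyperplane), not $\omega_n\mathcal{H}^n\res\partial\Omega_\infty$; and even granting $k_j\to c_\infty$ in $L^1(\sigma_j)$, the passage to $\log k_j\to\log c_\infty$ in $L^1(\sigma_j)$ needs the uniform $A_\infty$ lower estimates in a slightly more careful way than a bare John--Nirenberg $L^q$ bound, since $\log$ is unbounded near zero.
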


\begin{thm}
Let $\Omega\subset\R^{n+1}$ be a chord arc domain with vanishing
constant. Assume that $L\in \mathcal{L}(\lambda, \Lambda,\alpha)$. Then $\log k\in VMO(\partial\Omega)$.
\end{thm}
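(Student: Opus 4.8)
The plan is to deduce $\log k \in VMO(\partial\Omega)$ from the quantitative reverse-H\"older estimate of Theorem 2.12 together with the asymptotically optimal doubling of $\omega$ (Theorem 2.10). The basic mechanism is the same one that turns a $B_2$ (or $B_p$) bound with constant close to $1$ into a small-$BMO$ bound: on a chord arc domain with vanishing constant, pick $Q\in\partial\Omega$ and a small radius $r$; I want to show $\fint_{\Delta(Q,r)}\bigl|\log k - (\log k)_{\Delta(Q,r)}\bigr|\,d\sigma$ is small, uniformly in $Q$, once $r$ is small enough. Here $\Delta(Q,r)=B(Q,r)\cap\partial\Omega$ and the average is with respect to $\sigma$.

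First I would fix $\eps>0$ and apply Theorem 2.12: there is $\delta>0$ (hence a scale $R$) and an $r_0>0$ so that for $r<r_0$,
\begin{equation*}
\Bigl(\fint_{\Delta(Q,r)}k^{1+\beta}\,d\sigma\Bigr)^{\frac{1}{1+\beta}}\le (1+\eps)\fint_{\Delta(Q,r)}k\,d\sigma,
\end{equation*}
for a fixed choice of exponent $\beta\in(0,1)$. Since $\Omega$ is a CAD with vanishing constant, for $r$ small we also have $\sigma(\Delta(Q,r))/(\omega_n r^n)\to 1$ and $\omega(\Delta(Q,2r))\le C\,\omega(\Delta(Q,r))$ with $C$ close to $2^n$; in particular $\omega$ and $\sigma$ are comparable at small scales with the right doubling behavior. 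Next I would combine the reverse-H\"older inequality above with the corresponding $B_q$ lower bound for negative exponents — this is the step where one uses that $\omega\in A_\infty(d\sigma)$ (already established in Section 4) and that the $A_\infty$ constant can be taken close to $1$ when $\delta$ and $r$ are small — to get, for some small $q>0$,
\begin{equation*}
\fint_{\Delta(Q,r)}k\,d\sigma\le (1+\eps)\Bigl(\fint_{\Delta(Q,r)}k^{-q}\,d\sigma\Bigr)^{-\frac1q}.
\end{equation*}
Together these two inequalities say that the $L^{1+\beta}$ and $L^{-q}$ averages of $k$ over $\Delta(Q,r)$ agree up to a factor $(1+\eps)^2$.

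From there the passage to $VMO$ is the standard John–Nirenberg-type argument run with explicit constants. Writing $f=\log k - (\log k)_{\Delta}$ and using Jensen's inequality in the form $\fint e^{\pm t f}\le$ (the appropriate ratio of $L^{\pm}$-averages of $k$), the two-sided bound forces $\fint_{\Delta} e^{t|f|}\,d\sigma \le 1+C\eps$ for $t=\min(\beta,q)$ and $r$ small, whence $\fint_{\Delta}|f|\,d\sigma\le \frac{1}{t}\log(1+C\eps)$, which is as small as we like. Taking the supremum over $Q\in\partial\Omega$ and letting $\eps\to 0$ (so $\delta\to 0$ and $r_0\to 0$) gives exactly
\begin{equation*}
\lim_{r\to 0}\ \sup_{Q\in\partial\Omega}\ \fint_{\Delta(Q,r)}\bigl|\log k-(\log k)_{\Delta(Q,r)}\bigr|\,d\sigma = 0,
\end{equation*}
which is the definition of $\log k\in VMO(\partial\Omega)$. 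One should also record that $\log k\in BMO(\partial\Omega)$ to begin with (so that the averages are finite), which follows from $\omega\in A_\infty(d\sigma)$ on the CAD.

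The main obstacle, I expect, is not the soft $BMO\Rightarrow VMO$ bookkeeping but the two-sided control with constants tending to $1$: Theorem 2.12 is stated only as an upper reverse-H\"older bound, so I need the matching lower bound with constant $\to 1$ as $\delta, r\to 0$. This requires revisiting the proof that $\omega\in A_\infty(d\sigma)$ on CAD with small/vanishing constant and extracting that the $A_\infty$ (equivalently $B_q$ for some negative exponent) constant degenerates to the trivial one as the chord-arc constant vanishes — essentially an asymptotic version of the Fefferman–Kenig–Pipher estimates adapted, as in Section 4, to chord arc domains with small constant. A secondary technical point is the interface between $\omega$-averages and $\sigma$-averages: one must use \eqref{1.9} to replace $\omega(\Delta(Q,r))$-normalized quantities by $\sigma(\Delta(Q,r))$-normalized ones at a cost $(1+o(1))$, and check that the Radon–Nikodym derivative $k=d\omega/d\sigma$ is genuinely defined a.e.\ (again from $\omega\ll\sigma\ll\omega$, which holds once $\omega\in A_\infty(d\sigma)$). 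Once those asymptotic constants are in hand, the rest is the computation sketched above.
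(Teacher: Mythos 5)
Your overall strategy -- get $\log k\in VMO$ from a reverse-H\"older estimate whose constant degenerates to $1$, via a Sarason/John--Nirenberg-type argument -- is the right one and is indeed what the paper does (via Theorem \ref{Thm5.1}, Corollary \ref{cor5.2}, Corollary \ref{cor5.3}). But the item you flag as ``the main obstacle'' -- a negative-exponent reverse H\"older bound $\fint_\Delta k^{-q}\,d\sigma\le(1+\varepsilon)^q\bigl(\fint_\Delta k\,d\sigma\bigr)^{-q}$ with constant tending to $1$ -- is not in fact required, and your proposed way of obtaining it (``revisiting the proof that $\omega\in A_\infty(d\sigma)$ and extracting that the $A_\infty$ constant degenerates'') would essentially amount to proving the theorem a second time. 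This is where your outline has a genuine hole.

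The device you are missing is a change of reference measure. Put $d\nu=\bigl(\int_\Delta k\,d\sigma\bigr)^{-1}k\,d\sigma$, a probability measure on $\Delta$. Then
$$
\int_\Delta k^{\beta}\,d\nu\cdot\int_\Delta k^{-\beta}\,d\nu
=\frac{\int_\Delta k^{1+\beta}d\sigma\cdot\int_\Delta k^{1-\beta}d\sigma}{\bigl(\int_\Delta k\,d\sigma\bigr)^2},
$$
and since $1-\beta>0$, \emph{both} integrals on the right involve only \emph{positive} powers of $k$ against $\sigma$. Plain H\"older's inequality gives $\int_\Delta k^{1-\beta}d\sigma\le\bigl(\int_\Delta k^{1+\beta}d\sigma\bigr)^{\frac{1-\beta}{1+\beta}}\sigma(\Delta)^{\frac{2\beta}{1+\beta}}$, which after dividing through reduces the product above to $\bigl((\fint_\Delta k^{1+\beta})^{1/(1+\beta)}/\fint_\Delta k\bigr)^2\le(1+\varepsilon')^2$ by Theorem \ref{Thm5.1} alone. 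One is then exactly in the hypotheses of Sarason's lemma (for the measure $\nu$, with $f=\beta\log k$), which produces $\fint|\log k-c_\Delta|\,d\nu\le C\varepsilon'^{1/2}/\beta$; John--Nirenberg upgrades this to $L^p(d\omega)$, and the already-established $\omega\in A_\infty(d\sigma)$ (used here only with a fixed, possibly large, constant) transfers it back to $\sigma$-averages. So the single upper reverse-H\"older inequality suffices; you do not need a matching lower one, nor a sharpened $A_\infty$ constant. Your ``soft bookkeeping'' paragraph is also looser than it should be -- passing from $\fint e^{\pm tf}$ bounds to $\fint|f|$ small is precisely the content of Sarason's lemma, not a one-line Jensen estimate -- but that is a presentational issue; the substantive gap is the one above.
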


We now recall some of the results concerning the regularity of the elliptic measure of perturbation operators
in Lipschitz domains. The results in the literature are more general than those quoted below.

\begin{thm}\label{dahlberg1986}\cite{d}
Let $\Omega=B(0,1)$. If $a$ is as in (\ref{eqn:tt-a}),
\begin{equation}\label{carlesonnorm}
h(Q,r)=\bigg\{\frac{1}{\sigma(B(Q,r))}\int_{B(Q,r)\cap \Omega}\frac{a^2(X)}{\delta(X)}dX\bigg\}
\end{equation}
and
$$
\lim_{r \rightarrow 0}\sup_{|Q|=1}h(Q,r)=0.
$$
Then the elliptic kernel of $L$, $k=d\omega/d\sigma\in B_q(d\sigma)$ for all $q>1$.
\end{thm}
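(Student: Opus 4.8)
The plan is to run Dahlberg's perturbation argument, comparing the $L$-elliptic measure $\omega=\omega_L$ with the harmonic measure $\omega_0=\omega_\Delta$ of $B(0,1)$, both with pole at the origin. Since $B(0,1)$ is smooth, $\omega_0$ is a constant multiple of the surface measure $\sigma=\mathcal{H}^n\res\bdary B(0,1)$, so it suffices to show that $h:=d\omega/d\omega_0$ satisfies a reverse H\"older inequality of every exponent $q>1$ on every surface ball $\Delta(Q,r)=B(Q,r)\cap\bdary B(0,1)$; then $k=d\omega/d\sigma$ is a constant multiple of $h$ and lies in $B_q(d\sigma)$ for all $q>1$.

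The heart of the matter is a quantitative perturbation estimate. Given $f\in C(\bdary\Omega)$ with $0\le f\le 1$, let $u$ and $u_0$ solve $Lu=0$ and $\Delta u_0=0$ in $\Omega$ with $u=u_0=f$ on $\bdary\Omega$. Since $\dv(A\nabla u)=0$, the difference $w=u-u_0$ vanishes on $\bdary\Omega$ and solves $\Delta w=\dv\big((I-A)\nabla u\big)$, so Green's representation with the Green function $G$ of the Laplacian on $B(0,1)$ gives
\[
w(X)=-\int_\Omega \nabla_Y G(X,Y)\cdot\big(I-A(Y)\big)\,\nabla u(Y)\,dY .
\]
Because $|I-A(Y)|\le a(Z)$ whenever $Y$ lies in a Whitney ball associated to $Z$, one estimates the nontangential maximal function and the square function of $w$ by the Cauchy--Schwarz inequality together with the square-function / nontangential-maximal-function equivalences valid on the smooth domain $B(0,1)$ and a Carleson embedding controlled by $\sup_{Q,r}h(Q,r)$. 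Localizing this to a surface ball $\Delta(Q,r)$ and running the good-$\lambda$ inequality relating $S(w)$ to the nontangential maximal functions of $u$ and $u_0$ yields the following: for every $\eta>0$ there is $\eta'>0$ such that if $\sup_{Q'\in\bdary\Omega,\;s<r}h(Q',s)<\eta'$, then $\omega$ and $\omega_0$ are mutually absolutely continuous on $\Delta(Q,r)$ and the $A_\infty$ (equivalently reverse H\"older) constant of $h$ on $\Delta(Q,r)$ is within $\eta$ of the trivial one. If $\eta'$ is only required to be bounded this is Dahlberg's $A_\infty$ theorem and yields $B_{q_0}(d\sigma)$ for a single $q_0>1$; the new input here is the degeneration of the constant to the trivial value as the Carleson norm tends to $0$.

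From here the argument is soft. A reverse H\"older inequality with constant close to $1$ self-improves, forcing $\|\log h\|_{BMO(\Delta(Q,r))}$ to be small; by John--Nirenberg this gives $h\in B_q(\Delta(Q,r))$ for each fixed $q>1$, provided $r<r_0(q)$, where $r_0(q)$ is extracted from the vanishing-trace hypothesis so that $\sup_{Q',\,s<r_0(q)}h(Q',s)<\eta'(q)$. Finally, since $\bdary B(0,1)$ is compact, covering it by finitely many surface balls of radius $r_0(q)$ gives $h\in L^q(d\omega_0)$, and combining this with the doubling of $\omega$ and $\omega_0$, the non-degeneracy bound $\omega(\Delta(Q,r))\ge c(r_0(q))>0$ for $r\ge r_0(q)$, and the mutual absolute continuity established above upgrades the reverse H\"older inequality to hold on surface balls of every radius. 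Hence $k\in B_q(d\sigma)$ for all $q>1$.

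The main obstacle is the quantitative perturbation estimate in the second paragraph --- specifically, establishing the good-$\lambda$ inequality between the square function of $w$ and the relevant nontangential maximal functions and running the Carleson-embedding argument so that the resulting $A_\infty$/reverse-H\"older constant is shown to tend to the trivial value as the Carleson norm of $a^2(X)/\delta(X)\,dX$ tends to $0$. This is the technical core of Dahlberg's argument; the reduction to $\omega_0\sim\sigma$ on the ball, the John--Nirenberg self-improvement, and the compactness patching are comparatively routine.
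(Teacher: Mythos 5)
This theorem is not proved in the paper at all: it is one of several results (Theorems~\ref{dahlberg1986}--\ref{fkp1991}) that the authors explicitly ``recall'' from the literature, here citing Dahlberg's 1986 paper~\cite{d}. There is therefore no proof in the paper against which to compare your sketch, and the paper itself tells you that Dahlberg's original argument proceeds via ``a differential inequality for a family of harmonic measures,'' i.e.\ he interpolates between $\Delta$ and $L$ through a one-parameter family $L_t$ and controls $\frac{d}{dt}$ of the relevant weights, whereas your outline is a direct $L$-versus-$\Delta$ comparison in the square-function/good-$\lambda$ style of Fefferman--Kenig--Pipher~\cite{fkp}. These are genuinely different routes, and the paper itself calls FKP ``a new direct proof \dots\ without the use of this differential inequality.''

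As for the content of your sketch: the soft reduction is fine. With pole at the origin, $\omega_0$ of $B(0,1)$ is indeed a constant multiple of $\sigma$, the Sarason/John--Nirenberg self-improvement from a reverse H\"older constant close to $1$ to $B_q$ for a fixed $q$ with small enough BMO norm of $\log h$ is standard (and is in fact the same device the paper uses in Corollary~\ref{cor5.2}), and the compactness patching to pass from small radii to all radii is routine. But the part you defer is \emph{all} of the theorem. The claim ``for every $\eta>0$ there is $\eta'>0$ such that if the localized Carleson norm is $<\eta'$ then the $A_\infty$/reverse-H\"older constant of $h$ on $\Delta(Q,r)$ is within $\eta$ of trivial'' is precisely Dahlberg's result; the FKP-style good-$\lambda$ argument, as it appears in~\cite{fkp}, gives only a bounded $A_\infty$ constant when the Carleson norm is bounded, not the degeneration to the trivial constant as the norm tends to zero. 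To get the degeneration out of your framework you would have to track explicitly how the good-$\lambda$ parameters scale with the Carleson norm and then show the implied $B_q$ constant tends to the trivial one --- that is the technical core, and it is exactly what you label ``the main obstacle'' and do not attempt. So what you have is a correct reduction with the central estimate unproved, not a proof, and it follows a different line from Dahlberg's own (unreproduced) argument.
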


In \cite{feff}, Fefferman made the first step toward removing the smallness condition of $h(Q,r)$ in Theorem \ref{dahlberg1986} by defining an appropriate quantity $A(Q)$.

\begin{thm}\cite{feff}\label{fefferman1989}
Let $\Omega= B(0,1)$. Let $\Gamma(Q)$ denote a non-tangential cone with vertex $Q$ and
$$A(Q)=\bigg(\int_{\Gamma(Q)}\frac{a^2(X)}{\delta^n(X)}dX\bigg)^{1/2},$$
where $a$ is as in (\ref{eqn:tt-a}).
If $\|A\|_{L^{\infty}}\leq C$ then $\omega\in A_{\infty}(d\sigma)$.
\end{thm}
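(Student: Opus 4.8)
The statement is R.\ Fefferman's $A_\infty$ criterion, and the plan is to run a good-$\lambda$ argument comparing $\omega=\omega_L$ with harmonic measure $\omega_0=\omega_\Delta$; here $\omega_0\in A_\infty(d\sigma)$ is available for free (on $B(0,1)$ harmonic measure is comparable to surface measure $\sigma$, and on a general Lipschitz domain one would invoke Dahlberg's theorem). Since $A_\infty$ is an equivalence relation between doubling measures, it suffices to prove the following quantitative comparison of $\omega$ and $\omega_0$: fixing a surface ball $\Delta_0=B(Q_0,r_0)\cap\partial\Omega$ with interior corkscrew point $X_0$, for every $\eta>0$ there is $\varepsilon>0$ so that any Borel $E\subset\Delta_0$ with $\omega_0^{X_0}(E)<\varepsilon\,\omega_0^{X_0}(\Delta_0)$ satisfies $\omega^{X_0}(E)<\eta\,\omega^{X_0}(\Delta_0)$, together with its symmetric counterpart.

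The analytic engine is the comparison identity for the two Dirichlet problems. Put $u(X)=\omega^X(E)$ and $v(X)=\omega_0^X(E)$; sharing the boundary data $\mathbf 1_E$, the difference $w=u-v$ solves $Lw=-\dv((A-\mathrm{Id})\nabla v)$ with vanishing boundary trace, so testing against the $L$-Green function $G(X_0,\cdot)$ gives
$$
u(X_0)-v(X_0)=\int_\Omega\big\langle (A(Y)-\mathrm{Id})\,\nabla v(Y),\,\nabla_Y G(X_0,Y)\big\rangle\,dY.
$$
By interior Caccioppoli and De Giorgi--Nash--Moser estimates for $L$ (ellipticity suffices, and these are quantitative since $L\in\mathcal L(\lambda,\Lambda,\alpha)$) one may replace $|A-\mathrm{Id}|$ on each Whitney ball by the deviation $a$ up to constants, so Cauchy--Schwarz bounds the right-hand side by $\int_\Omega a(Y)\,|\nabla v(Y)|\,|\nabla_Y G(X_0,Y)|\,dY$. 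Rewriting $dY$ by the cone covering $\int_\Omega f\,dY\approx\int_{\partial\Omega}\int_{\Gamma(Q)}\delta(Y)^{-n}f(Y)\,dY\,d\sigma(Q)$ and applying Cauchy--Schwarz inside each cone, one factor becomes exactly $A(Q)=\big(\int_{\Gamma(Q)}a^2\delta^{-n}\big)^{1/2}$, bounded by $C$ by hypothesis, and the other is a square function in $\nabla v$ and $\nabla_Y G$ controlled via $|\nabla v|\lesssim\delta^{-1}$ (improved by nontangential maximal function bounds for $v$), $|\nabla_Y G(X_0,Y)|\lesssim G(X_0,Y)/\delta(Y)$ away from the pole, the Caffarelli--Fabes--Mortola--Salsa relation $G(X_0,Y)\approx\delta(Y)^{1-n}\omega^{X_0}(\Delta_Y)$ (with $\Delta_Y$ the surface ball of radius $\delta(Y)$ about the boundary projection of $Y$), and Dahlberg-type $L^2$ nontangential estimates. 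The outcome is that whenever the pertinent local area mass of $a$ is small one has $u(X_0)\approx v(X_0)$ in a scale-invariant way, which by Harnack and the maximum principle upgrades to a quantitative $A_\infty$-type comparison of $\omega$ and $\omega_0$ at that scale.

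Because $\|A\|_{L^\infty}\le C$ is not a smallness hypothesis, the last ingredient is a localization via a stopping-time (corona) decomposition: inside $\Delta_0$ select a maximal disjoint family $\{\Delta_j\}$ of surface balls at which the local area-integral mass of $a$ over the Carleson region above $\Delta_j$ first reaches a small threshold $\eta_0$. Over the sawtooth region above $\Delta_0\setminus\bigcup_j\Delta_j$ the mass is $\le\eta_0$, so the previous step yields $\omega\approx\omega_0$ there; combining this with the $A_\infty$ property of $\omega_0$ and the packing structure of $\{\Delta_j\}$ one extracts a fixed $\rho<1$ with $\omega^{X_0}\big(\bigcup_j\Delta_j\big)\le\rho\,\omega^{X_0}(\Delta_0)$ whenever the part of $E$ in the bad region is appreciable, and iterating down the resulting tree of generations sums geometrically to the desired $\varepsilon$--$\eta$ implication, hence $\omega\in A_\infty(d\sigma)$.

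The main obstacle is precisely this last step: calibrating the stopping threshold $\eta_0$, the cone aperture, and $\varepsilon$ in the correct order so the per-generation comparison is a genuine contraction, and using the $A_\infty$ property of $\omega_0$ quantitatively; underneath it lie the boundary square-function estimates for $\nabla_Y G(X_0,\cdot)$ and $\nabla v$, which demand the CFMS comparison, Dahlberg's $L^2$ bounds, and a careful dyadic accounting of the powers of $\delta(Y)$. The geometry of $B(0,1)$ is otherwise benign and supplies all the NTA and corkscrew tools in their simplest form.
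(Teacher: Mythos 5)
The paper does not prove this theorem; it quotes it from \cite{feff} as background, and the text immediately after the statement records that Dahlberg's and Fefferman's original arguments proceed ``using a differential inequality for a family of harmonic measures introduced by Dahlberg,'' while a direct proof of these results was supplied later by Fefferman--Kenig--Pipher \cite{fkp}. Your sketch is the direct (FKP-style) route, so it is a genuinely different approach from the one the paper cites for this statement. Fefferman's argument builds a one-parameter family of operators $L_t$ interpolating from $\Delta$ to $L$, estimates $\tfrac{d}{dt}\log k_t$ in terms of the cone function $A(Q)$, and integrates the differential inequality; yours compares $\omega_L$ to $\omega_\Delta$ in a single step via the Green function identity
\[
u(X_0)-v(X_0)=\int_\Omega\big\langle (A(Y)-\mathrm{Id})\nabla v(Y),\,\nabla_Y G(X_0,Y)\big\rangle\,dY,
\]
then closes with a stopping-time/good-$\lambda$ argument. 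Both routes give $A_\infty$; the direct one avoids justifying differentiability along the family, at the cost of the corona calibration you rightly identify as the crux.

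Two points in your sketch need attention. First, your stopping criterion --- select surface balls where ``the local area-integral mass of $a$ over the Carleson region first reaches $\eta_0$'' --- is the natural one under the FKP hypothesis (\ref{normfkp}), where $a^2(X)\delta(X)^{-1}\,dX$ is a Carleson measure. Under $\|A\|_{L^\infty}\le C$ that Carleson constant is \emph{not} controlled: integrating $A(Q)^2$ over $\Delta_0$ and applying Fubini only yields $\int_{T(\Delta_0)}a^2\,dX\lesssim C^2\sigma(\Delta_0)$, and one cannot bound $\int_{T(\Delta_0)}a^2/\delta\,dX$ (take $a\equiv\mathrm{const}$: the cone integral is finite while the Carleson integral of $a^2/\delta$ diverges logarithmically). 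Hence a corona built on the $a^2/\delta$ mass need not pack, and the geometric iteration over generations need not close. The remedy is to stop on the cone function itself --- for instance on the truncated quantity $\int_{\Gamma(Q)\cap T(\Delta_j)}a^2\delta^{-n}\,dX$, or its average/supremum over $\Delta_j$ --- which \emph{is} uniformly bounded by $C^2$ under the hypothesis and yields a corona that packs; this is precisely the calibration that makes the direct method cover both the Carleson and the cone hypotheses. Second, the representation identity needs justification: $v$ has only $L^\infty$ boundary data $\mathbf 1_E$, so $\nabla v$ is not in $L^2$ up to the boundary, and the integration by parts producing $\nabla_Y G(X_0,\cdot)$ must be carried out for smoothed data or on truncated domains and passed to the limit. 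Both fixes are standard but cannot be omitted in a complete proof.
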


The main results in \cite{d} and in \cite{feff} are proved using a differential inequality for a family of harmonic measures introduced by Dahlberg. In \cite{fkp}, Fefferman, Kenig and Pipher presented a new direct proof of these results without the use of this differential inequality.

\begin{thm}\label{fkp1991}\cite{fkp}
Let $\Omega$ be a Lipschitz domain. Let $L$ be such that (\ref{normfkp})  holds then
 $\omega\in A_\infty(d\sigma)$.
\end{thm}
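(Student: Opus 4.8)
The plan is to derive the conclusion from two ingredients: Dahlberg's theorem that on a Lipschitz domain the harmonic measure $\omega_\Delta$ belongs to $A_\infty(d\sigma)$, and a perturbation estimate showing that the $L$-elliptic measure $\omega_L$ belongs to $A_\infty(d\omega_\Delta)$. Since $\omega_\Delta$, $\omega_L$ and $\sigma$ are all doubling on $\partial\Omega$ and $A_\infty$ is an equivalence relation among doubling measures, the two ingredients combine to give $\omega_L\in A_\infty(d\sigma)$. Thus the real content is the relative statement $\omega_L\in A_\infty(d\omega_\Delta)$, and by the ``small sets'' characterization of $A_\infty$ it suffices to find $\varepsilon,C>0$ so that for every surface ball $\Delta=\Delta(Q_0,r)$ and every Borel set $E\subset\Delta$,
\[
\frac{\omega_\Delta(E)}{\omega_\Delta(\Delta)}<\varepsilon\quad\Longrightarrow\quad \frac{\omega_L(E)}{\omega_L(\Delta)}<\tfrac12 ,
\]
since, together with doubling, this self-improves to the full power-law bound defining $A_\infty$.

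Fix $\Delta$ and $E$, let $A_\Delta$ be a corkscrew point for $\Delta$, choose $0\le f\le 1$ continuous with $f\approx\mathbf 1_{2\Delta\setminus E}$, and let $v$ solve $\Delta v=0$ and $u$ solve $Lu=0$, both with boundary data $f$. Normalizing so that $\omega_\Delta^{A_\Delta}(\Delta)\approx\omega_L^{A_\Delta}(\Delta)\approx 1$, one has $v(A_\Delta)\approx 1-\omega_\Delta(E)/\omega_\Delta(\Delta)$ and $u(A_\Delta)\approx 1-\omega_L(E)/\omega_L(\Delta)$, so it is enough to show $|u(A_\Delta)-v(A_\Delta)|$ is small when $\omega_\Delta(E)/\omega_\Delta(\Delta)$ is. Set $w=u-v$; then $w=0$ on $\partial\Omega$ and $Lw=\mathrm{div}\big((I-A)\nabla v\big)$ with $|(I-A)\nabla v|\le a\,|\nabla v|$ by the definition (\ref{eqn:tt-a}) of the deviation. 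Representing $w$ through the Green function $G_L(A_\Delta,\cdot)$ and integrating by parts,
\[
|w(A_\Delta)|\le \int_\Omega a(X)\,|\nabla v(X)|\,|\nabla_X G_L(A_\Delta,X)|\,dX .
\]

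The heart of the matter is to estimate this integral. One disentangles the two gradients by Cauchy--Schwarz, pairing the Carleson measure $d\mu=a^2(X)\delta(X)^{-1}dX$ (whose norm is bounded by (\ref{normfkp})) against a square-function expression for $v$; using the interior bounds $|\nabla_X G_L(A_\Delta,X)|\lesssim G_L(A_\Delta,X)/\delta(X)$, the comparison $G_L(A_\Delta,X)\approx \omega_L(\Delta_X)\,\delta(X)^{1-n}$ of CFMS type, Caccioppoli, and boundary Harnack on the Lipschitz domain, one arrives at a square-function bound $\|S(u-v)\|_{L^2(d\omega_\Delta)}\le C\,\|\mu\|_{\mathrm{Carl}}^{1/2}\,\|N^*v\|_{L^2(d\omega_\Delta)}$, where $S$ and $N^*$ are the square and non-tangential maximal functions; since $u-v$ has zero boundary data this gives $\|N^*(u-v)\|_{L^2(d\omega_\Delta)}\lesssim\|S(u-v)\|_{L^2(d\omega_\Delta)}$, and as $0\le v\le1$ one concludes $|w(A_\Delta)|\le C\,\|\mu\|_{\mathrm{Carl}}^{1/2}$. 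This is bounded but not yet small, and the last — and most delicate — step is the passage from a merely bounded Carleson norm to the $A_\infty$ conclusion: following \cite{fkp} one performs a stopping-time decomposition of $\Omega$ into Carleson sawtooth subregions on which the localized norm of $\mu$ is as small as desired, runs the perturbation estimate there (where smallness does force $u(A_\Delta)\ge\tfrac12 v(A_\Delta)$), and sums/iterates over the decomposition.

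The main obstacle is thus twofold: establishing the square-function estimate for $w$, which rests on the $L^2(d\omega_\Delta)$ theory of the square and non-tangential maximal functions for harmonic functions on Lipschitz domains, the CFMS comparison of $G_L$ with $\omega_L$, and the Carleson-measure/square-function duality; and then the stopping-time self-improvement that removes the smallness hypothesis without losing the constants. Both are carried out in \cite{fkp}, and I would follow that argument.
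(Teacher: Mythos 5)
The paper does not prove this theorem: it is quoted verbatim from \cite{fkp} and then invoked as a black box in the proof of Theorem~\ref{mainpertu}, so there is no internal argument in the paper to compare your sketch against. Your sketch does reproduce the general shape of the Fefferman--Kenig--Pipher proof --- reduce to $\omega_L\in A_\infty(d\omega_\Delta)$ with $\omega_\Delta$ the harmonic measure (itself in $A_\infty(d\sigma)$ by Dahlberg), represent $w=u-v$ by the Green's-function identity $w(A_\Delta)=\int_\Omega \nabla_X G_L(A_\Delta,X)\cdot(I-A)(X)\nabla v(X)\,dX$, and bound this via square-function/Carleson-measure duality --- and the conclusion $|w(A_\Delta)|\lesssim\|\mu\|_{\mathrm{Carl}}^{1/2}$ is the right intermediate output.

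The step I would flag is the final paragraph. Passing to Carleson sawtooth subregions does \emph{not} make the localized Carleson norm of $d\mu=a^2(X)\delta(X)^{-1}dX$ small: that norm is invariant under rescaling and remains comparable to the global one on every sawtooth piece, so there is no reduction to Dahlberg's small-norm theorem along the lines you describe. What \cite{fkp} actually do to remove the smallness hypothesis is a good-$\lambda$ distributional inequality relating $S(u-v)$ and $N^{*}v$ in $L^{2}(d\omega_\Delta)$, combined with the equivalence $\|N^{*}w\|\simeq\|Sw\|$ for $w$ with zero boundary data and the $A_\infty(d\sigma)$ theory of $\omega_\Delta$; the $A_\infty$ conclusion for $\omega_L$ is read off directly from that good-$\lambda$ estimate rather than by iterating a small-Carleson-norm case over a decomposition. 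As written, your last step does not get past the ``bounded but not small'' obstacle you yourself identify; you would need the good-$\lambda$ mechanism, which is a genuinely different device from a stopping-time reduction of the Carleson norm.
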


In this paper we generalize Theorem \ref{fkp1991} to chord arc domains with small constant.

\begin{thm}\label{mainpertu}
Let $\Omega$ be a  chord arc domain. Let $L$ be such that (\ref{normfkp}) holds. There exists $\delta(n)>0$ such that if $\Omega \subset \R^{n+1}$ is a $(\delta,R)-$CAD with $0<\delta\leq\delta(n)$
then $\omega\in A_\infty(d\sigma)$.
\end{thm}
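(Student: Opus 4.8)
The plan is to reduce Theorem \ref{mainpertu} on a $(\delta,R)$-chord arc domain to the already-known Lipschitz case, Theorem \ref{fkp1991}, by transplanting the problem onto a Lipschitz graph domain via the usual localization/comparison machinery for NTA domains. First I would recall that $A_\infty(d\sigma)$ is a local and scale-invariant property: it suffices to prove that for each $Q_0\in\partial\Omega$ and each small surface ball $\Delta=B(Q_0,r)\cap\partial\Omega$, the elliptic measure $\omega^{X_0}$ with pole at a corkscrew point $X_0=A_r(Q_0)$ satisfies a quantitative $A_\infty$ estimate with respect to $\sigma$ on $\Delta$, with constants independent of $Q_0$ and $r\le r_0$. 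The key structural input is that a $(\delta,R)$-CAD with $\delta\le\delta(n)$ small is, at sufficiently small scales, sandwiched between two Lipschitz graphs with Lipschitz constant $M=M(\delta)\to 0$: by the Reifenberg flatness condition \eqref{1.6} together with the perimeter bound \eqref{1.7}, the results of Kenig--Toro (\cite{kt1}) give that $\partial\Omega\cap B(Q_0,r)$ is, up to a rotation, the graph of a Lipschitz function $\varphi$ with $\|\nabla\varphi\|_\infty\le C\delta^{1/2}$ (or a comparable gauge), and one can fit a genuine Lipschitz domain $\Omega'$ with $\Omega'\cap B(Q_0,r/2)=\Omega\cap B(Q_0,r/2)$.

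The second step is to compare the elliptic measure $\omega_L$ of $L$ on $\Omega$ with the elliptic measure $\omega_L'$ of the same operator $L$ on the Lipschitz domain $\Omega'$. Because $\Omega$ and $\Omega'$ agree inside $B(Q_0,r/2)$, standard localization of nonnegative $L$-solutions (maximum principle, Harnack chains in the NTA structure, and the comparison principle / boundary Harnack inequality of \cite{jk82}) shows that on the surface ball $\frac12\Delta$ the two measures $\omega_L^{X_0}$ and $\omega_L'^{X_0}$ are comparable: $\omega_L^{X_0}(E)\approx \omega_L'^{X_0}(E)$ for Borel $E\subset\frac12\Delta$, with constants depending only on $n,\lambda,\Lambda$ and the NTA constants (which, by the previous paragraph, are uniformly controlled in terms of $\delta$). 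The Carleson condition \eqref{normfkp} is essentially preserved under this surgery: the deviation function $a(X)$ and the distance $\delta(X)$ for $\Omega$ and for $\Omega'$ agree on $B(Q_0,r/2)$, and since $L=\Delta$ off a Carleson-controlled region and we only modify $L$ (if at all) in the part of $\Omega'$ outside $B(Q_0,r/2)$ by setting it back to $\Delta$, the hypothesis \eqref{normfkp} for $L$ on $\Omega$ yields the Lipschitz-domain Carleson bound needed to invoke Theorem \ref{fkp1991} on $\Omega'$. Applying that theorem gives $\omega_L'\in A_\infty(d\sigma')$ locally, and then the comparability of the two measures, together with the comparability of $\sigma=\mathcal H^n\res\partial\Omega$ and $\sigma'=\mathcal H^n\res\partial\Omega'$ on $\frac12\Delta$ (both being Ahlfors regular and agreeing as sets there), transfers the $A_\infty$ conclusion back to $\omega_L$ on $\Delta$. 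A covering argument over $\partial\Omega$ then gives the global statement, since $\partial\Omega$ is compact.

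The main obstacle I expect is making the sandwiching and the transfer of the Carleson condition genuinely quantitative and uniform: one must verify that the Lipschitz constant of the approximating graph, the NTA constants, and the Ahlfors-regularity constants of $\Omega'$ can all be taken to depend only on $\delta$ (and $n$), so that the threshold $\delta(n)$ in the statement is meaningful, and that the resulting $A_\infty$ constants do not degenerate as we pass from $\Omega'$ back to $\Omega$ or as $r\to 0$. A secondary technical point is that $A_\infty$ is not simply a local property in the naive sense — one needs the pole of the elliptic measure to be chosen consistently (e.g., a fixed interior corkscrew point, with the change-of-pole formula valid on NTA domains) so that the local $A_\infty$ estimates with uniform constants actually glue to a global $A_\infty(d\sigma)$ statement; this is handled exactly as in \cite{fkp} and \cite{kt1}, using that $\omega$ is doubling on the NTA domain $\Omega$. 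Finally, one should check that the modification of $L$ outside $B(Q_0,r/2)$ does not create a boundary piece on which the Carleson hypothesis fails — this is why it is convenient to first extend $L$ to all of $\R^{n+1}$ with $L=\Delta$ on $\Omega^c$, as the paper already arranges, so that the operator is globally defined and the surgery only involves replacing the domain.
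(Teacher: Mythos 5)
Your proposal rests on a structural assumption that fails: you claim one can fit a Lipschitz graph domain $\Omega'$ with $\Omega'\cap B(Q_0,r/2)=\Omega\cap B(Q_0,r/2)$, i.e.\ that $\partial\Omega$ is itself locally a Lipschitz graph. A $(\delta,R)$-chord arc domain with small $\delta$ is \emph{not} locally a graph; Semmes' decomposition (Theorem 2.2 of \cite{kt1}) only provides two Lipschitz graphs $\Gamma^{\pm}$ sandwiching $\partial\Omega$ from above and below, agreeing with $\partial\Omega$ on a set of large but not full measure, so that $\Omega^+\cap\mathcal{C}\subset\Omega\cap\mathcal{C}\subset\Omega^-\cap\mathcal{C}$ with an exceptional set controlled by $c_1\exp(-c_2/\eta)r^n$. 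Because your $\Omega'$ does not exist, the boundary Harnack comparison $\omega_L^{X_0}\approx\omega_L'^{X_0}$ on $\frac12\Delta$ that you use to transfer the $A_\infty$ estimate has no footing.

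The second, deeper gap concerns the transfer of the Carleson hypothesis \eqref{normfkp}. Even granting the sandwich, the deviation function is defined with $\delta(X)=\dist(X,\partial\Omega)$, not with the distance to the approximating Lipschitz boundary. This only transfers in one direction: for the \emph{outer} domain $\Omega^-\supset\Omega$ one has $\dist(X,\partial\Omega^-)\ge\dist(X,\partial\Omega)$ for $X\in\Omega$ and $L=\Delta$ in $\Omega^c$, so \eqref{normfkp} for $\Omega$ yields the Lipschitz Carleson bound for $\Omega^-$ and Theorem \ref{fkp1991} gives $\omega_-\in A_\infty(d\sigma_-)$. For the \emph{inner} domain $\Omega^+\subset\Omega$ the distance can be smaller and, as the paper explicitly notes, one does \emph{not} know $\omega_+\in A_\infty(d\sigma_+)$. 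Yet it is $\omega_+^A(E_1)$ that the maximum principle controls from $\omega^A(E)$. The missing ingredient is precisely the paper's Lemma \ref{mainDJK}, a Dahlberg--Jerison--Kenig-type comparison proved via a Whitney decomposition of the region between $\Gamma^+$ and $\Gamma^-$, which yields $\frac{\omega_-(E)}{\omega_-(\Delta_-)}\le C\bigl(\frac{\omega_+(E)}{\omega_+(\Delta_+)}\bigr)^{\theta}$ and so bridges from $\omega_+$ back to $\omega_-$, for which $A_\infty$ is available. Without this lemma and the two-sided sandwich, your reduction to Theorem \ref{fkp1991} cannot be carried out.
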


The various constants that will appear in the sequel may vary from formula to formula, although for simplicity we use the same letter(s). If we do not give
any explicit dependence for a constant, we mean that it depends
only on the usual parameters such as ellipticity constants, NTA constants and character of the domain and dimension. Moreover throughout the paper we shall use the notation $a\lesssim b$ to mean that there is a constant $c>0$ such that $ca\leq b$. Similarly $a\simeq b$ means that $a\lesssim b$ and $b\lesssim a$.

Next we recall the main theorems about the boundary behavior of
$L-$elliptic functions in non-tangentially accessible (NTA)
domains for uniformly elliptic divergence form operators $L$ with bounded measurable coefficients. We refer the reader to \cite{k1} for the definitions and
more details regarding elliptic operators of divergence form
defined in NTA domains.

\begin{lemma}\label{lem2.1}
Let $\Omega$ be an NTA domain. If $Lu=0$ in $\Omega\cap B(Q,2r)$ with $0<2r<R$, $u\geq 0$ and vanishes continuously
on $\partial\Omega\cap B(Q,2r)$ then there exists $\beta>0$ such that
for all $Q\in\partial\Omega$ and for $X\in \Omega\cap B(Q,r)$,
$$
u(X)\leq C \bigg(\frac{|X-Q|}{r}\bigg)^{\beta} \sup\{u(Y):
Y\in\partial B(Q,2r)\cap\Omega\}.
$$
\end{lemma}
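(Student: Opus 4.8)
The statement to prove is Lemma 2.1, a boundary decay estimate (Hölder-type growth from the boundary) for nonnegative $L$-elliptic functions vanishing on a boundary portion of an NTA domain. This is a classical result, essentially the boundary Hölder continuity / comparison with the maximum principle combined with the doubling/corkscrew structure of NTA domains.

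\medskip

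\textbf{Proof proposal.} The plan is to iterate a single oscillation-decay step along a geometrically decreasing sequence of balls centered at $Q$. First I would fix $Q\in\partial\Omega$, $0<2r<R$, and normalize by setting $M=\sup\{u(Y):Y\in\partial B(Q,2r)\cap\Omega\}$; by the maximum principle $u\le M$ on $\Omega\cap B(Q,2r)$ (using that $u\ge0$ and vanishes continuously on $\partial\Omega\cap B(Q,2r)$, so on the relevant portion of the boundary of the region $\Omega\cap B(Q,2r)$ we have $u\le M$). The key local step is: there is a constant $\gamma=\gamma(n,\lambda,\Lambda,\text{NTA})\in(0,1)$ such that for every $\rho$ with $0<\rho\le r$,
$$
\sup_{\Omega\cap B(Q,\rho/2)}u\ \le\ \gamma\,\sup_{\Omega\cap B(Q,\rho)}u.
$$
This follows from the standard NTA machinery: a nonnegative $L$-solution vanishing on $\partial\Omega\cap B(Q,\rho)$ satisfies, by the boundary Harnack-type estimate (or directly by comparison with $L$-harmonic measure of $\Omega\cap B(Q,\rho)$ together with the corkscrew condition and the doubling property of $\omega$), the bound $u(X)\le C\,\omega^{X}(\partial B(Q,\rho)\cap\Omega)\cdot\sup_{\partial B(Q,\rho)\cap\Omega}u$, and the harmonic measure of the "far" part of the boundary decays by a fixed factor when $X$ ranges over $B(Q,\rho/2)$. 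Equivalently one can invoke the interior-plus-boundary Hölder estimate for $L$-solutions with bounded measurable coefficients vanishing on a piece of an NTA (in particular, locally uniform/Corkscrew) boundary; this is in \cite{k1}.

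\medskip

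Once this decay step is established, I would iterate it: for $X\in\Omega\cap B(Q,r)$ with $|X-Q|=s\le r$, choose the integer $j\ge0$ with $2^{-(j+1)}r< s\le 2^{-j}r$, apply the step $j$ times starting from $B(Q,r)$, and obtain
$$
u(X)\ \le\ \sup_{\Omega\cap B(Q,2^{-j}r)}u\ \le\ \gamma^{\,j}\,\sup_{\Omega\cap B(Q,r)}u\ \le\ \gamma^{\,j} M,
$$
using $\sup_{\Omega\cap B(Q,r)}u\le M$ from the maximum principle. Writing $\beta=\log(1/\gamma)/\log 2>0$ and noting $\gamma^{j}=2^{-\beta j}\le C\,(s/r)^{\beta}$ (since $2^{-j}\le 2s/r$), this yields
$$
u(X)\ \le\ C\left(\frac{|X-Q|}{r}\right)^{\beta}\sup\{u(Y):Y\in\partial B(Q,2r)\cap\Omega\},
$$
which is the claim, with $C$ absorbing the factor $2^{\beta}$ and the constant from the decay step.

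\medskip

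\textbf{Main obstacle.} The only substantive point is the one-step oscillation decay $\sup_{\Omega\cap B(Q,\rho/2)}u\le\gamma\sup_{\Omega\cap B(Q,\rho)}u$ with $\gamma<1$ \emph{independent of $\rho$ and $Q$}. The scale-invariance is automatic for a divergence-form operator with merely bounded measurable coefficients (the ellipticity constants are scale invariant), so no smoothness of $A$ is needed here. The geometric input — that $\Omega$ satisfies the interior and exterior corkscrew conditions and Harnack chains uniformly at all scales $\le R$ — is exactly the NTA hypothesis, and the decay then follows from the De Giorgi–Nash–Moser boundary estimate for solutions vanishing on a piece of a corkscrew-accessible boundary, or from the weak maximum principle plus the uniform lower bound on the $L$-harmonic measure of the boundary part of $\Omega\cap B(Q,\rho)$ lying on $\partial\Omega$. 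I would cite \cite{k1} for this boundary Hölder estimate rather than reproving it; the rest of the argument is the routine dyadic iteration sketched above.
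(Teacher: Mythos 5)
Your argument is correct and is essentially the standard proof of this boundary decay estimate; note, however, that the paper does not prove Lemma~\ref{lem2.1} at all but simply quotes it as one of the standard facts about $L$-solutions in NTA domains, referring to \cite{k1} for proofs. Your two ingredients --- the maximum principle giving $u\le M$ on $\Omega\cap B(Q,2r)$, and the scale-invariant one-step decay $\sup_{\Omega\cap B(Q,\rho/2)}u\le\gamma\sup_{\Omega\cap B(Q,\rho)}u$ coming from the exterior corkscrew condition (which forces a uniform positive lower bound on the $L$-harmonic measure of $\partial\Omega\cap B(Q,\rho)$ from points of $\Omega\cap B(Q,\rho/2)$) --- followed by dyadic iteration is exactly the argument in \cite{k1}, and your bookkeeping ($2^{-j}<2s/r$, $\beta=\log(1/\gamma)/\log 2$, $C=2^\beta$ up to the constant from the decay step) is sound. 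Your remark that only bounded measurable coefficients and the NTA geometry are used, with no smoothness of $A$, is also correct and consistent with the paper's framing of Lemmata~\ref{lem2.1}--\ref{lem2.5} as results for general divergence-form operators.
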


\begin{lemma}\label{lem2.2}
Let $\Omega$ be an NTA domain, $Q\in\partial\Omega$, and $0<2r<R$.
If $u\geq 0$, $Lu=0$ in $\Omega$ and $u$ vanishes
continuously on $\partial\Omega\cap B(Q,2r)$, then
$$
u(Y)\leq C u(A(Q,r)),
$$
for all $Y\in B(Q,r)\cap\Omega$. Here $C$ only depends on the NTA
constants.
\end{lemma}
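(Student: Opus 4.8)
This statement is the \emph{Carleson estimate} (``Carleson's lemma'') for nonnegative solutions of $Lu=0$ vanishing on a boundary ball; in the harmonic case on NTA domains it is due to Jerison and Kenig \cite{jk82}, and for general uniformly elliptic divergence form operators with bounded measurable coefficients it is contained in \cite{k1}. Since the only tools involved---the interior Harnack inequality, the Harnack chain condition, the corkscrew condition, the maximum principle, and the boundary H\"older estimate of Lemma~\ref{lem2.1}---hold for every uniformly elliptic $L$ in divergence form, the classical proof carries over with no change; in all likelihood one simply invokes \cite{k1}. For completeness I indicate the argument. Fix $Q\in\partial\Omega$, $0<2r<R$, and $u$ as in the statement; write $\delta(X)=\dist(X,\partial\Omega)$ and, after normalizing $u(A(Q,r))=1$, note first that $u$ is bounded near the portion $\partial\Omega\cap B(Q,2r)$ where it vanishes continuously, so all suprema below are finite. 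The goal is the bound $u(Y)\le C$ for $Y\in B(Q,r)\cap\Omega$.

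The argument splits according to how close $Y$ is to $\partial\Omega$. If $\delta(Y)\ge\eta r$ for a constant $\eta$ depending only on the NTA character of $\Omega$, then $Y$ and $A(Q,r)$ both lie in $B(Q,r)$, both have distance comparable to $r$ from $\partial\Omega$, and $|Y-A(Q,r)|<2r$; by the Harnack chain condition they are joined by a chain of balls in $\Omega$ whose length is bounded by some $N=N(\eta)$, and iterating Harnack's inequality along this chain gives $u(Y)\le C(\eta)\,u(A(Q,r))=C(\eta)$. One is thus reduced to $Y$ with $\delta(Y)<\eta r$. For such a $Y$, let $\bar Q\in\partial\Omega$ realize $|Y-\bar Q|=\delta(Y)$; since $|\bar Q-Q|<(1+\eta)r<2r$, the function $u$ vanishes continuously on $\partial\Omega$ in a fixed neighborhood of $\bar Q$. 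Comparing $u$ with the auxiliary function $v$ that is $L$-harmonic in the NTA domain $\Omega\cap B(\bar Q,\rho)$, equals $1$ on the spherical part $\Omega\cap\partial B(\bar Q,\rho)$ of its boundary and $0$ on $\partial\Omega\cap B(\bar Q,\rho)$, the maximum principle yields
$$u(Z)\le\Big(\sup_{\Omega\cap\partial B(\bar Q,\rho)}u\Big)\,v(Z)\qquad\text{for }Z\in\Omega\cap B(\bar Q,\rho),$$
valid for every scale $\rho$ with $B(\bar Q,2\rho)\subset B(Q,2r)$; and since $0\le v\le1$, Lemma~\ref{lem2.1} applied to $v$ at the boundary point $\bar Q$ gives the decay $v(Y)\le C(\delta(Y)/\rho)^{\beta}$. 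Hence
$$u(Y)\le C\Big(\frac{\delta(Y)}{\rho}\Big)^{\beta}\sup_{\Omega\cap B(\bar Q,\rho)}u .$$

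It remains to control the supremum on the right, and this is the technical heart of the proof. One iterates the last inequality over the dyadic scales $\rho_j=2^{j}\delta(Y)$, from $\rho_0=\delta(Y)$ up to the largest $\rho_N$ still comparable to $r$ (so $N\asymp\log_2(r/\delta(Y))$): at the coarsest scale the relevant supremum is taken over a sphere most of which lies at distance $\gtrsim r$ from $\partial\Omega$, and those points are joined to $A(Q,r)$ by Harnack chains of bounded length, while the contribution of the part of each sphere near $\partial\Omega$ is reinserted into the comparison at the next scale down. The main obstacle---and the place where the hypotheses $u\ge0$ and $u|_{\partial\Omega\cap B(Q,2r)}=0$ are genuinely used---is to carry out this bookkeeping so that the geometric gain furnished by the boundary H\"older decay of the comparison functions accumulates to beat the geometric loss coming from the lengthening Harnack chains, all the while keeping every ball $B(\bar Q',\rho_j)$ inside $B(Q,2r)$ where $u$ vanishes on the boundary. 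When this is arranged one obtains $u(Y)\le C\,u(A(Q,r))$ with $C$ depending only on the NTA and ellipticity constants, which is the assertion; the details are exactly those of the constant-coefficient treatment in \cite{jk82}, \cite{k1}.
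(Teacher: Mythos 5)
The paper gives no proof of Lemma~\ref{lem2.2} at all; it is stated as one of several recalled facts about nonnegative $L$-solutions in NTA domains with a blanket reference to \cite{k1}, exactly as you anticipate. Your identification of the statement as the Carleson estimate and your sketch of the standard Harnack-chain-plus-boundary-decay iteration are consistent with the cited source, so the proposal matches the paper's (non-)treatment.
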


\begin{lemma}\label{lem2.3}
Let $\Omega$ be an NTA domain, $Q\in\partial\Omega$, $0<2r<R$, and
$X\in\Omega\backslash B(Q,2r)$. Then
$$
C^{-1}<\frac{\omega^{X}(B(Q,r))}{r^{n-1}|G(A(Q,r),X)|}<C,
$$
where $G(A(Q,r),X)$ is the $L-$Green function of $\Omega$ with pole $X$, and $\omega^X$ is the corresponding elliptic measure.
\end{lemma}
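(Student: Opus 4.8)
The statement is the classical Caffarelli--Fabes--Mortola--Salsa comparison estimate, in the divergence-form, NTA setting ($\omega^X(B(Q,r))$ of course means $\omega^X(B(Q,r)\cap\partial\Omega)$). Write $\Delta(Q,s):=B(Q,s)\cap\partial\Omega$, fix for each scale $s$ a corkscrew point $A_s:=A(Q,s)$ with $M^{-1}s<\delta(A_s)$ and $|A_s-Q|<s/2$, and use the symmetry of $L$ in the form $G(Y,Z)=G(Z,Y)$. The tools I would use are: the maximum principle; the interior De Giorgi--Nash--Moser estimates (Harnack, Caccioppoli); the NTA structure (interior and exterior corkscrews, Harnack chains); the interior size bounds $G(Y,Z)\lesssim|Y-Z|^{1-n}$ always, and $G(Y,Z)\simeq|Y-Z|^{1-n}$ when $\delta(Y),\delta(Z)\gtrsim|Y-Z|$; and Lemmas \ref{lem2.1}--\ref{lem2.2}. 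The two halves of the inequality call for genuinely different arguments.

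For the \emph{upper} bound $\omega^X(\Delta(Q,r))\le C r^{n-1}G(A_r,X)$ when $|X-Q|\ge 8r$, I would run a flux/energy argument. Pick $\tilde\phi\in C^\infty_c(\R^{n+1})$ with $\tilde\phi\equiv 1$ on $B(Q,r)$, $\supp\tilde\phi\subset B(Q,3r/2)$, $|\nabla\tilde\phi|\lesssim r^{-1}$, and let $\phi:=\tilde\phi|_{\partial\Omega}\ge \mathbf 1_{\Delta(Q,r)}$. If $u(Y):=\int\phi\,d\omega^Y$ is its $L$-harmonic extension, then $\tilde\phi-u$ has zero trace with $L(\tilde\phi-u)=L\tilde\phi$, so (integrating by parts, legitimate because $\supp\nabla\tilde\phi$ avoids the pole $X$ and $G(\cdot,X)$ vanishes on $\partial\Omega$, and using $\tilde\phi(X)=0$)
\[
0\le \omega^X(\Delta(Q,r))\le u(X)=\Big|\int_{\Omega\cap(B(Q,3r/2)\setminus B(Q,r))}\langle A\nabla\tilde\phi,\nabla G(\cdot,X)\rangle\Big|\lesssim r^{-1}\,r^{\frac{n+1}2}\Big(\int_{\Omega\cap B(Q,2r)}|\nabla G(\cdot,X)|^2\Big)^{\frac12}.
\]
Since $X\notin B(Q,4r)$, $G(\cdot,X)$ solves $Lv=0$ in $\Omega\cap B(Q,4r)$ and vanishes on $\partial\Omega\cap B(Q,4r)$, so Caccioppoli up to the boundary, the Carleson estimate (Lemma \ref{lem2.2}) and interior Harnack give $\int_{\Omega\cap B(Q,2r)}|\nabla G(\cdot,X)|^2\lesssim r^{-2}\int_{\Omega\cap B(Q,4r)}G(\cdot,X)^2\lesssim r^{n-1}\big(\sup_{\Omega\cap B(Q,4r)}G(\cdot,X)\big)^2\lesssim r^{n-1}G(A_r,X)^2$; plugging in yields $\omega^X(\Delta(Q,r))\lesssim r^{n-1}G(A_r,X)$.

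For the \emph{lower} bound $\omega^X(\Delta(Q,r))\ge c\,r^{n-1}G(A_r,X)$, valid for all $X\in\Omega\setminus B(Q,2r)$, I would compare on the bounded domain $\mathcal R:=\Omega\setminus\overline{B(Q,r/2)}$ the two $L$-harmonic functions $f(Y):=G(A_{r/2},Y)$ (its pole $A_{r/2}\in B(Q,r/4)$ lies outside $\mathcal R$) and $g(Y):=C\,r^{1-n}\omega^Y(\Delta(Q,r))$. On $\partial\mathcal R\cap\partial\Omega$, $f\equiv 0\le g$. On $\partial B(Q,r/2)\cap\Omega$, $f(Z)=G(Z,A_{r/2})\lesssim |Z-A_{r/2}|^{1-n}\lesssim r^{1-n}$, while $\omega^Z(\Delta(Q,r))\ge c_1>0$ uniformly: the nearest boundary point $\hat\xi$ of any such $Z$ has $|\hat\xi-Q|\approx r/2$, hence lies well inside $\Delta(Q,r)$ and at distance $\gtrsim r$ from $\partial\Omega\setminus\Delta(Q,r)$, so by Lemma \ref{lem2.1} $\omega^Z(\partial\Omega\setminus\Delta(Q,r))\le C(\delta(Z)/r)^\beta$, which is $\le\tfrac12$ when $\delta(Z)$ is small; whereas for $\delta(Z)\gtrsim r$ the same application of Lemma \ref{lem2.1} near $\hat\xi$ gives a point $Y_0$ at a fixed small distance from $\partial\Omega$ with $\omega^{Y_0}(\Delta(Q,r))\ge\tfrac12$, and a Harnack chain of bounded length from $Y_0$ to $Z$ yields $\omega^Z(\Delta(Q,r))\gtrsim1$. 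Taking $C$ large enough, $f\le g$ on $\partial\mathcal R$, hence in $\mathcal R$; evaluating at $X$ and using $G(A_{r/2},X)\simeq G(A_r,X)$ (interior Harnack) gives the claim.

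It remains to treat the upper bound in the range $2r\le |X-Q|<8r$. If in addition $\delta(X)\gtrsim|X-Q|$ this is elementary: by interior Harnack $\omega^X(\Delta(Q,r))\simeq \omega^{A_{2r}}(\Delta(Q,r))$ and $G(A_r,X)\simeq G(A_r,A_{2r})$, and the ``base case'' $\omega^{A_{2r}}(\Delta(Q,r))\simeq 1\simeq r^{n-1}G(A_r,A_{2r})$ follows just as in the lower-bound argument. \textbf{The main obstacle} is the complementary case -- $|X-Q|\simeq r$ but $X$ close to $\partial\Omega$, near a boundary point $\xi$ with $|\xi-Q|\simeq r$. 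Here the flux argument fails (the Carleson estimate for $G(\cdot,X)$ is available only at scales $\ll r$, so it cannot reproduce the factor $G(A_r,X)$), and one must localize near $\xi$, where $\omega^\cdot(\Delta(Q,r))$ and $G(\cdot,A_r)$ both vanish on $\partial\Omega$. Having the estimate already in hand at a larger scale, one would run the first-exit decomposition from $\Omega\cap B(\xi,r/4)$, which expresses \emph{both} $\omega^X(\Delta(Q,r))$ and $G(X,A_r)$ as integrals against the \emph{same} hitting measure on $\partial B(\xi,r/4)\cap\Omega$, and then bound the ratio $\omega^Z(\Delta(Q,r))/\big(r^{n-1}G(A_r,Z)\big)$ over that sphere using Lemmas \ref{lem2.1}--\ref{lem2.2} and interior Harnack. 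This localization -- effectively a scale-limited boundary comparison principle -- is where the full NTA geometry (both corkscrew conditions together with the Harnack chain condition) enters essentially, and I expect it to be the most delicate part.
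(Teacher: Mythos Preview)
The paper does not prove Lemma~\ref{lem2.3}; it is quoted as one of the standard boundary estimates for divergence-form operators on NTA domains, with the reader referred to \cite{k1} (and, implicitly, to \cite{cfms}, where the estimate originates). Your argument is essentially the classical CFMS proof: the upper bound via a cut-off and the Green representation plus boundary Caccioppoli and the Carleson estimate, the lower bound via the maximum principle on $\Omega\setminus\overline{B(Q,r/2)}$ together with the elementary lower bound $\omega^Z(\Delta(Q,r))\gtrsim 1$ for $Z\in\partial B(Q,r/2)\cap\Omega$. Your identification of the residual case $|X-Q|\simeq r$ with $X$ near $\partial\Omega$ as the delicate point is accurate, and your localization via the hitting distribution on $\partial B(\xi,r/4)\cap\Omega$ is one of the standard ways to close it using only Lemmas~\ref{lem2.1}--\ref{lem2.2} and Harnack chains (so that no circularity with Lemma~\ref{lem2.5} is introduced). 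In short: correct, and this is the proof one finds in the references the paper cites.
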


\begin{lemma}\label{lem2.4}
Let $\Omega$ be an NTA domain with constants $M>1$ and $R>0$,
$Q\in\partial\Omega$, $0<2r<R$, and $X\in\Omega\backslash
B(Q,2Mr)$. Then for $s\in [0,r]$
$$
\omega^{X}(B(Q,2s))\le C \omega^{X}(B(Q,s)),
$$
where $C$ only depends on the NTA constants of $\Omega$.
\end{lemma}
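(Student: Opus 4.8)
The plan is to read off the doubling estimate from the comparison between elliptic measure and the Green function provided by Lemma~\ref{lem2.3}, combined with Harnack's inequality along a Harnack chain joining two corkscrew points. Throughout, fix $Q\in\partial\Omega$, $X\in\Omega\setminus B(Q,2Mr)$ and $s\in(0,r]$. Since $M>1$ and $s\le r$ we have $X\in\Omega\setminus B(Q,2Ms)$, so $X$ lies at distance $\gtrsim s$ from $B(Q,2s)$; and since $2s\le 2r<R$ the corkscrew points $A(Q,s)$ and $A(Q,2s)$ are well defined, with $|A(Q,\rho)-Q|<\rho$ and $\delta(A(Q,\rho))>\rho/M$ for $\rho\in\{s,2s\}$.

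The essential case is $s\le r/2$, where Lemma~\ref{lem2.3} applies at radii $s$ and $2s$ (note $4s\le 2r<R$ and $X\notin B(Q,4s)$), giving
$$
\omega^{X}(B(Q,s))\simeq s^{n-1}\,G(A(Q,s),X),\qquad
\omega^{X}(B(Q,2s))\simeq (2s)^{n-1}\,G(A(Q,2s),X),
$$
with constants depending only on the ellipticity and NTA data. Since $(2s)^{n-1}\simeq s^{n-1}$, it remains to compare $G(A(Q,2s),X)$ with $G(A(Q,s),X)$. For this one uses
$$
|A(Q,2s)-A(Q,s)|<3s<3M\,\min\{\delta(A(Q,s)),\delta(A(Q,2s))\},
$$
so by the Harnack chain condition these two points are joined by a Harnack chain of length $N$ bounded in terms of $M$ alone, contained in $\Omega$ and (using $X\notin B(Q,2Mr)$) staying at distance $\gtrsim s$ from $X$. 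Since $G(\cdot,X)$ is a nonnegative $L$-solution in a neighbourhood of this chain, iterating Harnack's inequality $N$ times yields $G(A(Q,2s),X)\simeq G(A(Q,s),X)$ with constant depending only on $M$, the ellipticity constants and $n$. Combining the three relations gives $\omega^{X}(B(Q,2s))\le C\,\omega^{X}(B(Q,s))$.

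For the remaining range $r/2<s\le r$ I would localize. Since $2s<R$, the set $\partial\Omega\cap B(Q,2s)$ is covered by at most $N(n,M)$ balls $B(Q_i,\eta r)$ with $Q_i\in\partial\Omega$, where $\eta=\eta(M)\in(0,1/2]$ is chosen small enough that $X\notin B(Q_i,2\eta r)$ for every $i$ (possible since $|X-Q_i|\ge 2Mr-2r>0$). Then $\omega^X(B(Q,2s))\le\sum_i\omega^X(B(Q_i,\eta r))$, and applying Lemma~\ref{lem2.3} at radius $\eta r$ at each $Q_i$ together with a Harnack chain joining $A(Q_i,\eta r)$ to $A(Q,r/2)$ (again of bounded length and staying away from $X$) gives $\omega^X(B(Q_i,\eta r))\lesssim \omega^X(B(Q,r/2))\le\omega^X(B(Q,s))$. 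Summing the boundedly many terms finishes this case, and hence the proof.

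There is no genuine analytic obstacle here: all the nontrivial input---H\"older continuity at the boundary, the Carleson-type estimate, and the elliptic-measure/Green-function comparison---is already contained in Lemmas~\ref{lem2.1}--\ref{lem2.3}, which are valid for divergence-form operators with bounded measurable coefficients. The only point requiring care is the uniformity bookkeeping: one must track the quantitative connectivity of $\Omega$ so that each Harnack chain invoked above has length controlled by $M$ and remains at distance $\gtrsim s$ (respectively $\gtrsim r$) from $X$, ensuring that the final constant $C$ depends only on the NTA constants (together with $n$ and the ellipticity constants) and not on $Q$, $X$, $s$ or $r$.
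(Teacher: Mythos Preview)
Your argument is correct and is precisely the standard route to the doubling of elliptic measure on NTA domains: combine the Green-function/elliptic-measure comparison (Lemma~\ref{lem2.3}) with a Harnack chain between $A(Q,s)$ and $A(Q,2s)$. The case split at $s=r/2$ is the right way to guarantee the hypotheses of Lemma~\ref{lem2.3} at both radii, and your covering argument for $r/2<s\le r$ is a legitimate workaround for the borderline range where $4s$ may exceed $R$.

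As for the comparison you were asked to make: the paper does \emph{not} prove Lemma~\ref{lem2.4}. It is listed among the background facts in Section~2 that are simply recalled from the literature (the authors refer to \cite{k1}; the result goes back to Jerison--Kenig and Caffarelli--Fabes--Mortola--Salsa). So there is no ``paper's own proof'' to compare with---your write-up supplies exactly the argument one finds in those references.

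One small remark on the bookkeeping you flagged: to make rigorous that the Harnack chain avoids the pole $X$, it is cleanest to observe that $G(\cdot,X)$ is a nonnegative $L$-solution on all of $\Omega\cap B(Q,2Mr)$, and that the Harnack chain joining $A(Q,s)$ to $A(Q,2s)$ can be taken inside $\Omega\cap B(Q,Cs)$ for a structural constant $C$ (this containment is part of the Jerison--Kenig formulation of the Harnack chain condition). Since $Cs\le Cr<2Mr$ once $C$ is fixed and $M$ is the NTA constant, the chain never meets the pole and your iteration of Harnack is justified with constants depending only on the NTA and ellipticity data.
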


\begin{lemma}\label{lem2.5}
Let $\Omega$ be an NTA domain, and $0<Mr<R$. Suppose that $u,v$ vanish continuously on
$\partial\Omega\cap B(Q,Mr)$ for some $Q\in\partial\Omega$, $u,v\geq 0$ and
$Lu=Lv=0$ in $\Omega$. Then there exists a
constant $C>1$ (only depending on the NTA constants) such that for
all $X\in B(Q,r)\cap\Omega$,
$$
C^{-1}\frac{u(A(Q,r))}{v(A(Q,r))}\leq \frac{u(X)}{v(X)}\leq
C\frac{u(A(Q,r))}{v(A(Q,r))}.
$$
\end{lemma}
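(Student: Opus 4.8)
This statement is the boundary Harnack (comparison) principle for nonnegative $L$-harmonic functions vanishing on a boundary portion of an NTA domain, and I would prove it following the classical argument of Caffarelli--Fabes--Mortola--Salsa (cf.\ \cite{jk82}, \cite{k1}), using the preliminary estimates collected above.

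\textbf{Reductions.} Since the asserted inequality is unchanged if $u$ and $v$ are each multiplied by a positive constant, I normalize $u(A(Q,r))=v(A(Q,r))=1$; and since the upper bound applied to the pair $(v,u)$ yields the lower bound for $(u,v)$, it suffices to prove $u(X)\le C\,v(X)$ for $X\in B(Q,r)\cap\Omega$. I also assume $M$ is as large as convenient, which is harmless since the NTA constant of $\Omega$ may always be increased; together with the fact that two corkscrew points $A(Q,s)$, $A(Q,s')$ are joined by an interior Harnack chain whose length is controlled by $\log(s/s')$ and the NTA constants, this lets me work at a scale $\rho\simeq Mr$ and transfer estimates back to the scale $r$ — in particular $u(A(Q,\rho))\simeq u(A(Q,r))=1$ and likewise for $v$, with implied constants depending only on the NTA constants.

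\textbf{Core comparison.} The heart of the proof is to show that, up to the normalizing constant $u(A(Q,\rho))$, every such $u$ is comparable to one fixed profile independent of $u$. As profile I take $\Phi(X):=G(X,X_0)$, the $L$-Green function of $\Omega$ with a far pole $X_0:=A(Q,4\rho)$: it is nonnegative and $L$-harmonic in $\Omega\cap B(Q,3\rho)$ and vanishes continuously on $\partial\Omega\cap B(Q,3\rho)$, so it is itself an admissible competitor. I would establish, for $X\in B(Q,\rho)\cap\Omega$,
$$
c\,\frac{G(X,X_0)}{G(A(Q,\rho),X_0)}\ \le\ \frac{u(X)}{u(A(Q,\rho))}\ \le\ C\,\frac{G(X,X_0)}{G(A(Q,\rho),X_0)},
$$
and similarly for $v$; dividing these and using $u(A(Q,\rho))\simeq v(A(Q,\rho))\simeq 1$ makes the profile cancel and yields $u(X)\le C\,v(X)$ on $B(Q,\rho)\cap\Omega\supseteq B(Q,r)\cap\Omega$, which is the claim. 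The ingredients for the two-sided estimate are: the Carleson estimate (Lemma~\ref{lem2.2}) together with interior Harnack, which control $u$ and $\Phi$ near $\partial\Omega$ by their values at corkscrew points; the comparison of elliptic measure and the Green function (Lemma~\ref{lem2.3}), $\omega^{X_0}(B(Q,s))\simeq s^{n-1}G(A(Q,s),X_0)$, which is what pins down the correct vanishing rate of the profile; the doubling property of $\omega$ (Lemma~\ref{lem2.4}), used repeatedly to match elliptic measures of the truncated domains $\Omega\cap B(Q,s)$ with those of $\Omega$ at comparable but unequal scales; and the maximum principle for $L$, applied in the truncated domains together with an iteration over dyadic scales to close the estimate. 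All constants produced this way depend only on the NTA constants of $\Omega$ and the ellipticity constants of $L$, as asserted.

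\textbf{Main obstacle.} The delicate point is the lower half of the core comparison, $u(X)\ge c\,u(A(Q,\rho))\,G(X,X_0)/G(A(Q,\rho),X_0)$: since $L$ has only bounded measurable coefficients there are no pointwise barriers, so one cannot simply exhibit a subsolution, and a naive single application of the maximum principle is circular because $u$ and the profile both degenerate near $\partial\Omega$ at rates one is trying to compare. Making this rigorous is exactly where the dyadic iteration and the repeated use of Lemmas~\ref{lem2.3} and \ref{lem2.4} enter; everything else — the reductions, the Carleson-estimate-based upper bound, and the final cancellation — is routine.
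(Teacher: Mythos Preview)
The paper does not actually prove Lemma~\ref{lem2.5}: it is listed among the preliminary results on the boundary behavior of $L$-harmonic functions in NTA domains that are simply recalled from the literature, with the reader referred to \cite{k1} (and implicitly to \cite{cfms}, \cite{jk82}) for proofs. Your sketch follows precisely the classical Caffarelli--Fabes--Mortola--Salsa / Jerison--Kenig route underlying those references --- normalize, reduce to a one-sided bound, compare both $u$ and $v$ to the Green-function profile via the Carleson estimate, the Green-function/elliptic-measure comparison, doubling, and a dyadic iteration for the lower bound --- so there is nothing to contrast: your approach \emph{is} the standard one the paper is quoting.
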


\begin{thm}\label{Thm2.3}
Let $\Omega$ be an NTA domain. There exists a number $\gamma\in(0,1)$,
such that for all $Q\in\partial\Omega$, $0<2r<R$, and all $u,v\geq 0$ satisfying $Lu=Lv=0$ in $\Omega\cap B(Q,2r)$
and which vanish continuously on $\partial\Omega\cap B(Q,2r)$, the function $\frac{u(X)}{v(X)}$
is H\"older continuous of order $\gamma$ on $\overline{\Omega}\cap
\overline{B(Q,r)}$. In particular, for every $Q\in
\partial\Omega$, $\lim_{X\rightarrow Q}\frac{u(X)}{v(X)}$ exists, and for
$X,Y\in \Omega\cap B(Q,r)$,
$$
\bigg|\frac{u(X)}{v(X)}-\frac{u(Y)}{v(Y)} \bigg|\leq C
\frac{u(A(Q,r))}{v(A(Q,r))}\bigg(\frac{|X-Y|}{r}\bigg)^{\gamma}.
$$
\end{thm}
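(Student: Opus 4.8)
\textbf{Proof strategy for Theorem \ref{Thm2.3}.}

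The plan is to deduce the H\"older continuity of the ratio $u/v$ up to the boundary from the comparison principle (Lemma \ref{lem2.5}) together with an oscillation-decay iteration, exactly as in the classical arguments of Jerison--Kenig and Caffarelli--Fabes--Mortola--Salsa. First I would normalize at a fixed base point: replacing $v$ by $v/v(A(Q,r))$ and $u$ by $u/v(A(Q,r))$ we may assume $v(A(Q,r))=1$, so by Lemma \ref{lem2.5} we have $u/v \simeq u(A(Q,r))$ on $B(Q,r)\cap\Omega$; call this quantity $M_0$. By the Harnack chain property of NTA domains and Harnack's inequality for $L$ (both available since $\Omega$ is NTA and $L$ is uniformly elliptic in divergence form), $v$ is comparable to a positive constant on each Whitney region, which lets us freely pass between $v$-weighted and unweighted estimates.

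The core is the following claim, to be applied iteratively at scales $r_j = 2^{-j} r$: there exists $\eta \in (0,1)$, depending only on the NTA and ellipticity constants, such that
$$
\osc_{\Omega\cap B(Q,r_{j+1})} \frac{u}{v} \;\le\; \eta \;\osc_{\Omega\cap B(Q,r_j)} \frac{u}{v}.
$$
To prove one step of this, set $m_j = \inf_{\Omega\cap B(Q,r_j)} u/v$ and $M_j = \sup_{\Omega\cap B(Q,r_j)} u/v$; then $u - m_j v$ and $M_j v - u$ are both nonnegative $L$-solutions in $\Omega\cap B(Q,r_j)$ vanishing continuously on $\partial\Omega\cap B(Q,r_j)$. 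Applying Lemma \ref{lem2.5} (at scale $r_j/M$, comparing each of these against $v$) on $B(Q, r_j/M)\cap\Omega$ gives, for all $X$ in that smaller ball,
$$
\frac{u(X)-m_j v(X)}{v(X)} \ge C^{-1}\frac{u(A(Q,r_j/M)) - m_j v(A(Q,r_j/M))}{v(A(Q,r_j/M))},
$$
and symmetrically for $M_j v - u$. Adding the two inequalities evaluated at the same interior corkscrew point $A_j = A(Q,r_j)$ and using that $u(A_j)/v(A_j)$ lies in $[m_j, M_j]$, one of the two terms on the right is at least $\tfrac12(M_j - m_j)$; this produces a genuine gain, i.e. $M_{j+1} - m_{j+1} \le (1 - C^{-1})(M_j - m_j)$ provided the ratio $r_{j+1}:r_j$ is adjusted to land inside the region where Lemma \ref{lem2.5} was applied. (If one prefers a cleaner bookkeeping, iterate over scales $2^{-j}r$ but apply Lemma \ref{lem2.5} with its own internal constant $M$ absorbed; the dyadic choice is not essential.) Iterating yields $\osc_{\Omega\cap B(Q,\rho)} u/v \lesssim M_0 (\rho/r)^\gamma$ for $\rho \le r$, where $\gamma = \log_2(1/\eta)$, which is the desired decay at the boundary point $Q$.

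To upgrade from ``oscillation decay at every boundary point'' to genuine H\"older continuity of $u/v$ on all of $\overline{\Omega}\cap\overline{B(Q,r)}$, I would combine the boundary decay just proved with interior H\"older estimates: for $X,Y \in \Omega\cap B(Q,r)$, let $d = \min(\delta(X),\delta(Y))$ and pick $Q' \in \partial\Omega$ with $|X - Q'| \simeq \delta(X)$. If $|X-Y|$ is comparable to or larger than $d$, both points lie in a ball $B(Q', C|X-Y|)$ and the boundary oscillation estimate at $Q'$ (at scale $C|X-Y|$) controls $|u(X)/v(X) - u(Y)/v(Y)|$ by $C M_0 (|X-Y|/r)^\gamma$. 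If $|X-Y| \ll d$, then $X,Y$ lie deep in a Whitney ball where $v$ is bounded below; there $u/v$ solves (or rather, is a ratio of solutions of) a uniformly elliptic equation with no worse than measurable coefficients, and the standard De Giorgi--Nash interior H\"older estimate, rescaled to a ball of radius $\simeq d$ on which $u/v$ has oscillation $\lesssim M_0 (d/r)^\gamma$ by the previous case, gives $|u(X)/v(X)-u(Y)/v(Y)| \lesssim M_0 (d/r)^\gamma (|X-Y|/d)^{\gamma'} \le C M_0 (|X-Y|/r)^{\gamma\wedge\gamma'}$. Taking the smaller of the two H\"older exponents gives a uniform estimate, and letting $Y \to Q$ shows $\lim_{X\to Q} u(X)/v(X)$ exists for every $Q\in\partial\Omega$.

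The main obstacle is the one-step oscillation gain: one must ensure that the comparison inequalities from Lemma \ref{lem2.5}, applied to the two auxiliary solutions $u - m_j v$ and $M_j v - u$, are evaluated at a common interior point whose $u/v$-value is pinched between $m_j$ and $M_j$, so that at least one of the two ``defects'' $M_j - u(A_j)/v(A_j)$ and $u(A_j)/v(A_j) - m_j$ is bounded below by a fixed fraction of $M_j - m_j$ \emph{and} is simultaneously controlled from below on the whole next ball. This requires keeping the corkscrew points at consecutive scales in a fixed Harnack chain and tracking how the constant $M$ in the NTA condition and in Lemma \ref{lem2.5} interacts with the dyadic scale ratio; once that bookkeeping is set up, the iteration and the interior-estimate gluing are routine. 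Note every ingredient used (Lemma \ref{lem2.5}, Harnack, interior De Giorgi--Nash) holds for general divergence-form $L$ with bounded measurable coefficients, so the theorem is not special to $\mathcal{L}(\lambda,\Lambda,\alpha)$.
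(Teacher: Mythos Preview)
Your argument is correct and is essentially the classical oscillation-decay proof due to Jerison--Kenig and Caffarelli--Fabes--Mortola--Salsa. Note, however, that the paper does not actually prove Theorem~\ref{Thm2.3}: it is stated in Section~2 among the preliminary results that are merely \emph{recalled} from the literature, with the reader referred to \cite{k1} for details. So there is no ``paper's own proof'' to compare against; what you have written is the standard proof that \cite{k1} and \cite{cfms} give, and it is fine as a reconstruction of that argument.
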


We finish this section by recalling a result concerning the
regularity of elliptic measure on Lipschitz domains,
as well as some doubling properties of the elliptic measure of
a cylinder. Let $H\subset \R^{n+1}$ be an open half space, for
$M>1$, $s>0$, and $Q_{0}\in\partial H = \mathcal{L}$ we denote by
$$
\mathcal{C}^{+}(Q_{0},Ms)=\{ (x,t)\in\R^{n+1}: x\in \partial H;
|x-Q_{0}|\leq \frac{Ms}{\sqrt{n+1}}, \ |t|\leq
\frac{Ms}{\sqrt{n+1}}\}\cap H,
$$
the cylinder with basis $B(Q_{0}, {{Ms}/{\sqrt{n+1}}})\cap \partial H$ and
height ${{Ms}/{\sqrt{n+1}}}$ contained in $H$. Note that
$\mathcal{C}^{+}(Q_{0},Ms)\subset B(Q_{0},Ms)$.

\begin{lemma}\label{lem2.7}
Given $\varepsilon>0$ and $L\in\mathcal{L}(\lambda,\Lambda, \alpha)$ there exists $M_{0}=M_{0}(n,\varepsilon,\alpha)>1$,
so that if $M\geq M_{0}$, and if $\omega$ denotes the $L-$elliptic
measure of $\mathcal{C}^{+}(Q_{0},Ms)$ as defined above, then for
$Q_{1}, Q_{2}\in \Delta(Q_{0},s)=\partial H\cap B(Q_{0},s)$, and $r_{1},
r_{2}\in (0,s]$
\begin{equation}
(1-\varepsilon)\bigg(\frac{r_{1}}{r_{2}}\bigg)^{n}\leq
\frac{\omega^{X}(\Delta(Q_{1},r_1))}{\omega^{X}(\Delta(
Q_{2},r_2))}\leq (1+\varepsilon)\bigg(\frac{r_{1}}{r_{2}}\bigg)^{n},
\end{equation}
as long as $X=(x,t)\in \partial\mathcal{C}^{+}(Q_{0},Ms/2)\cap
\mathcal{C}^{+}(Q_{0},Ms)$.
\end{lemma}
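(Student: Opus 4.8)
\emph{Strategy and reduction.} After a translation and a rotation take $Q_0=0$ and $H=\{(x,t)\in\R^{n+1}:t>0\}$, and after the dilation $X\mapsto X/s$ take $s=1$; the rescaled operator still lies in $\mathcal L(\lambda,\Lambda,\alpha)$, and — as is implicit in the way the lemma is used — we take $s$ small enough that the quantity $c_0(Ms)^\alpha$, which governs the oscillation of the coefficient matrix over $\mathcal C^+(0,M)$, is as small as we wish. The cylinder $\mathcal C^+(0,M)$ is a bounded NTA domain with NTA constants independent of $M$, and for $L\in\mathcal L(\lambda,\Lambda,\alpha)$ its elliptic measure is, on the flat face $\partial H\cap\partial\mathcal C^+(0,M)$, mutually absolutely continuous with $\mathcal H^n$, with density the inner co-normal derivative $k^X(Q)=\langle A(Q)\nabla_Y G(Q,X),e_t\rangle$ of the Green function $G(\cdot,X)$. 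Hence $\omega^X(\Delta(Q_i,r_i))=\int_{\Delta(Q_i,r_i)}k^X\,d\mathcal H^n$, and since all the $\Delta(Q_i,r_i)$ lie in $\Delta(0,2)$ the entire lemma reduces to the sharp near-constancy
\[
k^X(Q)=k^X(0)\bigl(1+O(M^{-\alpha'})\bigr),\qquad Q\in\Delta(0,2),
\]
for some $\alpha'=\alpha'(n,\lambda,\Lambda,\alpha)\in(0,\alpha]$: integrating gives $\omega^X(\Delta(Q_i,r_i))=\omega_n r_i^n k^X(0)(1+O(M^{-\alpha'}))$, and dividing the cases $i=1,2$ proves the lemma once $M_0=M_0(n,\varepsilon,\alpha)$ is chosen with $O(M_0^{-\alpha'})\le\varepsilon$.

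\emph{Regularity of the Poisson kernel.} The near-constancy comes from the fact that $G(\cdot,X)$ is far smoother near the flat face than generic NTA theory provides. Fix a region $\mathcal R$ around the centre of the base, of diameter comparable to $M$, lying at distance comparable to $M$ from $X$ (possible since $X\in\partial\mathcal C^+(0,M/2)$ forces $|x_X|$ or $t_X$ to be $\simeq M$), from the lateral walls and from the top of $\mathcal C^+(0,M)$; on $\mathcal R$ the function $G(\cdot,X)$ is a nonnegative solution of $Lu=0$ vanishing continuously on the \emph{flat} piece $\mathcal R\cap\partial H$. The boundary Schauder estimate for divergence-form operators with H\"older coefficients along a flat boundary — whose constant depends on $\lambda,\Lambda,\alpha$ and on $c_0(Ms)^\alpha$, which is why $s$ was taken small — yields $G(\cdot,X)\in C^{1,\alpha'}(\mathcal R')$ on a slightly smaller region $\mathcal R'$, with
\[
\|\nabla G(\cdot,X)\|_{L^\infty(\mathcal R')}\lesssim M^{-1}m,\qquad [\nabla G(\cdot,X)]_{C^{\alpha'}(\mathcal R')}\lesssim M^{-(1+\alpha')}m,\qquad m:=\|G(\cdot,X)\|_{L^\infty(\mathcal R)} .
\]
Since $k^X=\langle A\,\nabla G(\cdot,X),e_t\rangle$ on $\Delta(0,2)\subset\mathcal R'\cap\partial H$ and $A\in C^\alpha$, these bounds give $\osc_{\Delta(0,2)}k^X\lesssim M^{-(1+\alpha')}m$ (the term carrying $[A]_{C^\alpha}$ is of lower order thanks to the smallness of $c_0(Ms)^\alpha$).

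\emph{A matching lower bound.} To convert this into the relative bound above, we need $k^X(0)\simeq m/M$. The estimate $k^X(0)\le\|\nabla G(\cdot,X)\|_{L^\infty(\mathcal R')}\lesssim m/M$ is immediate. Conversely, the boundary Harnack principle (Lemma \ref{lem2.2}) gives $m\simeq G(A,X)$ for a corkscrew point $A$ of $\mathcal C^+(0,M)$ lying at height $\simeq M$ over the base centre; and since $G(\cdot,X)$ vanishes on the flat base and is $C^{1,\alpha'}$ up to it, along the vertical segment over $0$ one has $G((0,\tau),X)=\tau\,k^X(0)/a_{n+1,n+1}(0)+O\bigl(M^{-(1+\alpha')}m\,\tau^{1+\alpha'}\bigr)$, valid for $\tau$ up to a small fixed fraction of $M$; taking $\tau$ of that order and using $G((0,\tau),X)\simeq m$ (Harnack) forces $k^X(0)\simeq m/M$. (Equivalently, $k^X(0)\simeq\omega^X(\Delta(0,\rho))/\rho^n\simeq G(A(0,\rho),X)/\rho$ by Lemmas \ref{lem2.2}--\ref{lem2.3}.) Combining the two paragraphs, $k^X(Q)=k^X(0)(1+O(M^{-\alpha'}))$ on $\Delta(0,2)$, and the lemma follows.

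\emph{Main obstacle.} The delicate point is the \emph{sharpness}: a crude use of interior and boundary Harnack only controls $\osc_{\Delta(0,2)}k^X$ by a fixed multiple of $k^X(0)$, which is worthless for an asymptotically optimal constant. The true gain $M^{-\alpha'}$ is produced by running the boundary Schauder estimate at scale $M$ — the order of the distance from $\Delta(0,2)$ to the ``bad'' part of $\partial\mathcal C^+(0,M)$ (the pole, the edges, the top) — instead of at unit scale; this is exactly what forces $c_0(Ms)^\alpha$, hence $s$, to be small once $M$ is fixed. A secondary difficulty is that the lower bound $k^X(0)\gtrsim m/M$ must persist even when $X$ lies low on a lateral wall and $\omega^X(\Delta(0,2))$ is itself minuscule; this is where the boundary comparison principles (Lemmas \ref{lem2.2}, \ref{lem2.5}) and the Carleson estimate (Lemma \ref{lem2.3}) are used.
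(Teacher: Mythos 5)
Your proof is correct in substance and uses the same analytic ingredients as the paper's — the identity $\omega^X(\Delta)=\int_\Delta\langle A\nabla_Q G(Q,X),\nu\rangle\,d\sigma$, boundary Schauder regularity for the Green function, and a Hopf-type lower bound — but it is organized genuinely differently. The paper rescales so that $Ms=1$ (so the cylinder is unit-sized and the balls $\Delta(Q_i,r_i)$ have radius $\le 1/M$) and then splits into two cases according to the height $t$ of the pole $X$: when $t\gtrsim 1/\kappa$, Hopf gives an \emph{absolute} lower bound $k^X(Q_0)\ge C_k(n,\lambda,\Lambda,\kappa)>0$ and the $C^{1,\alpha}$ estimate bounds the oscillation of $k^X$ over the tiny ball $\Delta(Q_0,2/M)$ by $C/M^\alpha$; when $X$ sits low on the lateral wall this absolute Hopf bound degenerates, so the paper instead lifts $X$ vertically to height $\simeq 1/\kappa$ and invokes the boundary H\"older continuity of ratios of solutions (Theorem~\ref{Thm2.3}) to transfer the estimate, paying an extra factor $1\pm C\kappa^{-\alpha}$. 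You avoid this case split entirely by normalizing by $m=\|G(\cdot,X)\|_{L^\infty(\mathcal R)}$: running Schauder at scale $M$ gives $\osc_{\Delta(0,2)}k^X\lesssim M^{-1-\alpha'}m$, and Hopf/Harnack together with the $C^{1,\alpha'}$ Taylor expansion along the vertical line over $Q_0$ give the matching $k^X(Q_0)\simeq m/M$, both \emph{relative} to $m$ and hence uniform over all $X\in\partial\mathcal C^+(Q_0,Ms/2)$; this is cleaner and dispenses with Theorem~\ref{Thm2.3}, at the price of carefully tracking $m$ and verifying the two-sided bound on $k^X(Q_0)$. A further merit of your write-up is that you make explicit the dependence of the Schauder constant on the adimensional quantity $c_0(Ms)^\alpha$; this dependence is present in the paper's argument too (the $C=C(n,\lambda,\Lambda,\kappa)$ there is secretly also a function of $c_0(Ms)^\alpha$), and it is harmless only because in every application $Ms\le R$. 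One minor over-statement on your side: you ask for $c_0(Ms)^\alpha$ to be ``as small as we wish,'' but boundedness suffices — the term carrying $[A]_{C^\alpha}$ in the oscillation of $k^X$ contributes $\lesssim c_0(Ms)^\alpha\,m M^{-1-\alpha}$, which is already of the desired order $mM^{-1-\alpha'}$ once $c_0(Ms)^\alpha$ is merely bounded (taking $\alpha'=\alpha$).
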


\begin{proof}
After rescaling we may assume without loss of generality that $Ms=1$,
$r_{i}\in (0,{1/M}]$, for $i=1,2$. First let us examine the case when
$X=(x,t)\in\partial\mathcal{C}^{+}(Q_{0},1/2)$, with $t\geq
{1/(2\kappa\sqrt{n+1}})$ and $\kappa>2$ to be chosen later. If
$\omega^X$ denotes the $L-$elliptic measure then
$$\omega^X(\Delta(Q_{1}, r_{1}))=\int_{\Delta(Q_1,r_1)}\langle A(Q)\nabla_Q G(Q,X),\nu \rangle d\sigma(Q)$$
or
\begin{equation}\label{repres1}
\omega^X(\Delta(Q_{1}, r_{1}))\leq\int_{\Delta(Q_1,r_1)}|\langle A(Q)\nabla_Q G(Q,X),\nu\rangle-\langle A(Q_0)\nabla_Q G(Q_0,X),\nu\rangle|d\sigma(Q)
\end{equation}
$$+\langle A(Q_0)\nabla_Q G(Q_0,X),\nu\rangle r_1^n$$
where $\nu$ denotes the inward unit normal to $H$ at $Q\in \partial H$. By the Hopf maximum principle (see \cite{ck}, \cite{hs}) there exists a constant $C_k=C_k(n,\lambda,\Lambda,\kappa)>0$ such that
$$\langle A(Q_0)\nabla_Q G(Q_0,X),\nu\rangle\geq C_k>0.$$
Moreover from the $C^{1,\alpha}$ regularity up to the boundary (\cite{gt}), we estimate the first term of (\ref{repres1}) to obtain
$$\omega^X(\Delta(Q_{1}, r_{1}))\leq (1+Cr_1^{\alpha})r_1^n\langle A(Q_0)\nabla_Q G(Q_0,X),\nu\rangle$$
where $C=C(n,\lambda,\Lambda,\kappa)$.
In a similar way, using the appropriate representation we have
$$\omega^X(\Delta(Q_{2}, r_{2}))\geq (1-Cr_2^{\alpha})r_2^n\langle A(Q_0)\nabla_Q G(Q_0,X),\nu\rangle$$
provided that $\Delta(Q_i,r_i)\subset \Delta(Q_0,2/M)$. Since $r_1,r_2<1/M$, we conclude that
$$\frac{\omega^{X}(\Delta(Q_{1},r_{1}))}{\omega^{X}(\Delta(Q_{2},r_{2}))} \leq \bigg(\frac{1+C/M^\alpha}{1-C/M^{\alpha}}\bigg) \bigg(\frac{r_1}{r_2}\bigg)^{n}$$
and
$$\frac{\omega^{X}(\Delta(Q_{1},r_{1}))}{\omega^{X}(\Delta(Q_{2},r_{2}))} \geq \bigg(\frac{1-C/M^{\alpha}}{1+C/M^{\alpha}}\bigg) \bigg(\frac{r_1}{r_2}\bigg)^{n}$$
provided that $M$ is large enough.

Now if $X=(x,t)\in\partial\mathcal{C}^{+}(Q_{0},{1/2})$, and
 $t\leq 1/{2\kappa\sqrt{n+1}}$, ${\omega^{X}(\Delta(Q_{1},r_{1}))}$ and ${\omega^{X}(\Delta(Q_{2},r_{2}))}$ vanish on
$B((x,0),1/4\sqrt{n+1}) \cap \partial H$ and are non negative in
$\mathcal{C}^{+}(Q_{0},1)$. Applying Theorem \ref{Thm2.3} we have
that for $\kappa>8$,
$$
\bigg|\frac{\omega^{X}(\Delta(Q_1,r_{1}))}{\omega^{X}(\Delta(Q_{2},r_{2}))}-
\frac{\omega^{(x,{1/{2\kappa\sqrt{n+1}}})}(\Delta(Q_{1},r_{1}))}{\omega^{(x,{1/{2\kappa\sqrt{n+1}}})}(\Delta(Q_2,r_{2}))}\bigg|\leq C\frac{\omega^{(x,{{1}/
{16\sqrt{n+1}}})}(\Delta(Q_{1},r_1))}{\omega^{(x,{{1}/
{16\sqrt{n+1}}})}(\Delta(Q_{2},r_2))} \bigg(\frac{1}
{\kappa}\bigg)^{\alpha}.
$$
On the other hand our new reference points $(x,1/
2\kappa\sqrt{n+1})$ and $(x,1/16\sqrt{n+1})$ fall into the first
case as described above, since $\kappa>8$. Thus
\begin{equation*}
\bigg(1-C\bigg(\frac{1}{\kappa}\bigg)^{\alpha}\bigg)
\bigg(\frac{1-C/M^{\alpha}}{1+C/M^{\alpha}}\bigg) \bigg(\frac{r_1}{r_2}\bigg)^{n}\leq
\frac{\omega^{X}(\Delta(Q_{1}, r_{1}))}{\omega^{X}(\Delta(Q_{2},
r_{2}))}\leq
\bigg(1+C\bigg(\frac{1}{\kappa}\bigg)^{\alpha}\bigg)
\bigg(\frac{1+C/M^{\alpha}}{1-C/M^{\alpha}}\bigg) \bigg(\frac{r_1}{r_2}\bigg)^{n}.
\end{equation*}
To finish the proof, for a given $\varepsilon>0$ choose $\kappa>8$
such that $(1-C(1/\kappa)^{\alpha})\geq \sqrt{1-\varepsilon}$, and
$(1+C(1/ \kappa)^{\alpha})\leq \sqrt{1+\varepsilon}$. Next for
that selection of $\kappa$ choose $M_{0}>2$ large enough so that
for $M\geq M_{0}$, $$\sqrt{1-\varepsilon}\leq \frac{1-C/M^{\alpha}}{1+C/M^{\alpha}}\leq \frac{1+C/M^{\alpha}}{1-C/M^{\alpha}}\leq \sqrt{1+\varepsilon}.$$
\end{proof}

\section{Optimal Doubling on Reifenberg Flat Domains.}\label{optimalDoubling}

As seen in Lemma \ref{lem2.4}, for $L\in\mathcal{L}(\lambda,\Lambda,\alpha)$, the $L-$elliptic measure of an NTA domain is a doubling
measure on $\partial\Omega$. In the present section we prove that for such $L$, the $L-$elliptic measure
of a Reifenberg flat domain $\Omega\subset \R^{n+1}$ with vanishing constant,
behaves asymptotically like the Euclidean measure in $\R^{n}$. Using the terminology introduced by
M. Korey (see \cite{ko}) we say that the $L-$elliptic measure of a Reifenberg flat domain with vanishing
constant is asymptotically optimally doubling.

\begin{thm}\label{Thm4.1}
Given $\varepsilon>0$, for $L\in\mathcal{L}(\lambda,\Lambda,\alpha)$, there exists $M(n,\varepsilon,\alpha)>1$, so that if
$M\geq M(n,\varepsilon,\alpha)$ there exists $\delta(\varepsilon,\alpha, M,r/s)=\delta>0$ such that,
for $0<r\leq s\leq {R/M}$ and any $(\delta,R)$-Reifenberg flat domain
$\Omega\subset \R^{n+1}$ we have:
$$
(1-\varepsilon)\bigg(\frac{r}{s}\bigg)^{n}\leq
\frac{\omega^{X}(B(Q_1,r))}{\omega^{X}(B(Q_2,s))}\leq
(1+\varepsilon)\bigg(\frac{r}{s}\bigg)^{n},
$$
where $Q_{1}, Q_{2}\in \partial\Omega\cap B(Q_0,s)$ for some
$Q_{0}\in\partial\Omega$, $X\in\Omega$, and $|X-Q_{0}|\geq Ms$.
\end{thm}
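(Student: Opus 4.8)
The plan is to deduce Theorem~\ref{Thm4.1} by comparing the $L$-elliptic measure of the Reifenberg flat domain $\Omega$ with the $L$-elliptic measure of the model cylinder from Lemma~\ref{lem2.7}, using the flatness hypothesis to force the two domains to be close at scale $s$. First I would fix $\varepsilon>0$ and choose $M\ge M_0(n,\varepsilon/10,\alpha)$ as provided by Lemma~\ref{lem2.7}. Given $Q_0\in\partial\Omega$ and the scale $s\le R/M$, Reifenberg flatness with constant $\delta$ gives an $n$-plane $\mathcal{L}=\mathcal{L}(Q_0,Ms)$ with $\frac{1}{Ms}D[\partial\Omega\cap B(Q_0,Ms),\mathcal{L}\cap B(Q_0,Ms)]\le\delta$, so on the ball $B(Q_0,Ms)$ the boundary $\partial\Omega$ lies in a $\delta Ms$-neighborhood of $\mathcal{L}$. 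Let $H$ be the half-space bounded by $\mathcal{L}$ on the ``interior'' side (the one containing $T^+(Q_0,Ms)$) and let $\mathcal{C}^+(Q_0,Ms)$ be the associated cylinder, with $\omega_0$ its $L$-elliptic measure. The geometric input is that $\Omega\cap B(Q_0,Ms)$ and $H\cap B(Q_0,Ms)$ differ only in a thin slab of width $O(\delta Ms)$ around $\mathcal{L}$, and consequently a point $X$ with $|X-Q_0|\ge Ms$ is, relative to $\Omega$, comparable to a point on $\partial\mathcal{C}^+(Q_0,Ms/2)\cap\mathcal{C}^+(Q_0,Ms)$ relative to the cylinder (after a harmless relocation of $X$, justified by Harnack and the comparison principle).

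Next I would transfer the conclusion of Lemma~\ref{lem2.7}, which controls ratios $\omega_0^X(\Delta(Q_i,r_i))/\omega_0^X(\Delta(Q_j,r_j))$ for surface balls $\Delta$ on $\mathcal{L}$, to the genuine surface balls $B(Q_i,r)\cap\partial\Omega$. The mechanism here is the same continuity/stability argument that underlies \cite{kt1}: because $\partial\Omega$ is within $\delta Ms$ of $\mathcal{L}$ in Hausdorff distance on $B(Q_0,Ms)$, the domains $\Omega$ and $H$ (intersected with the cylinder) can be compared via Lemmas~\ref{lem2.2}, \ref{lem2.3} and \ref{lem2.5} and Theorem~\ref{Thm2.3} applied to the $L$-Green functions and elliptic measures of the two domains; one shows $\omega^X(B(Q_i,r))$ and $\omega_0^X(\Delta(\tilde Q_i,r))$ (where $\tilde Q_i$ is the projection of $Q_i$ onto $\mathcal{L}$) agree up to a factor $1+\eta(\delta)$ with $\eta(\delta)\to0$ as $\delta\to0$, for $Q_i\in\partial\Omega\cap B(Q_0,s)$ and $r\le s$. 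This is where the dependence $\delta=\delta(\varepsilon,\alpha,M,r/s)$ enters: the closeness needed depends on how small the surface balls are relative to $s$, since the comparison constants degrade as $r/s\to0$ (one needs the flat approximation to persist down to scale $r$, i.e.\ one iterates the flatness at scales between $r$ and $s$). Combining the cylinder estimate $(1\pm\varepsilon/2)(r/s)^n$ with the two-sided comparison factors $(1+\eta(\delta))^{\pm 1}$ and absorbing $\eta(\delta)$ into $\varepsilon$ by choosing $\delta$ small yields
$$
(1-\varepsilon)\Big(\frac{r}{s}\Big)^n\le\frac{\omega^X(B(Q_1,r))}{\omega^X(B(Q_2,s))}\le(1+\varepsilon)\Big(\frac{r}{s}\Big)^n.
$$

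A secondary point to handle is the reduction from ``two different surface balls of two different radii'' to the situation actually controlled: I would first prove the estimate with $Q_1=Q_2=Q_0$ and general $r\le s$ (comparing $\omega^X(B(Q_0,r))$ to $\omega^X(B(Q_0,s))$ through the cylinder), then use the doubling of $\omega$ (Lemma~\ref{lem2.4}) together with the triangle inequality $B(Q_i,r)\subset B(Q_0,2s)$ and Theorem~\ref{Thm2.3} to move the center from $Q_0$ to $Q_1$ and $Q_2$ at cost $1+o(1)$; alternatively, and more cleanly, one applies Lemma~\ref{lem2.7} directly with the two surface balls $\Delta(\tilde Q_1,r)$ and $\Delta(\tilde Q_2,s)$ on $\mathcal{L}$ and only needs the single comparison $\Omega\leftrightarrow H$ at the top scale plus the iterated flatness to bring $B(Q_1,r)$ down to its model. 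The main obstacle I anticipate is making the comparison between $\omega^X$ (on $\Omega$) and $\omega_0^X$ (on the cylinder) quantitatively stable in $\delta$ while keeping uniformity over all $Q_i\in\partial\Omega\cap B(Q_0,s)$ and all $r\le s$: the NTA/boundary-Harnack machinery gives qualitative comparison, but extracting a modulus $\eta(\delta)\to 0$ requires carefully tracking how the constants in Lemmas~\ref{lem2.2}--\ref{lem2.5} and Theorem~\ref{Thm2.3} depend on the Hausdorff closeness of the two boundaries, exactly as in the harmonic case of \cite{kt1}; the Hölder continuity of the coefficients (the $C^\alpha$ hypothesis, used already in Lemma~\ref{lem2.7} via $C^{1,\alpha}$ boundary regularity of Green's functions) is what allows the model computation on the flat piece to be essentially the Laplacian computation up to $O(r^\alpha)$ errors, and these must be shown not to interfere with the $\delta$-limit.
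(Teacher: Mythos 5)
There is a genuine gap in your plan, and it is exactly where the paper has to work hardest. You propose to show that $\omega^X(B(Q_i,r))$ (for the full domain $\Omega$) and $\omega_0^X(\Delta(\tilde Q_i,r))$ (for the model cylinder) ``agree up to a factor $1+\eta(\delta)$ with $\eta(\delta)\to0$.'' That is false even at $\delta=0$: if $\partial\Omega$ coincides exactly with $\mathcal{L}$ in $B(Q_0,Ms)$, the elliptic measure of $\Omega$ and that of the truncated cylinder $\mathcal{C}^+(Q_0,Ms)$ are still genuinely different measures on $\partial\Omega\cap B(Q_0,s)$, because $\mathcal{C}^+(Q_0,Ms)$ is a proper sub-domain of $\Omega$. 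The maximum principle gives only one-sided comparability, and the multiplicative gap does not shrink as $\delta\to 0$; it depends on the global geometry outside the cylinder.

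What \emph{is} true, and what the paper actually proves, is a two-stage comparison that is structurally different from what you describe. First (Lemmata~\ref{lem4.1} and \ref{lem4.2}), one compares the elliptic measure $\tilde\omega^X$ of the truncation $\widetilde\Omega(Q_0,Ms)=\Omega\cap\mathcal{C}(Q_0,Ms)$ with the cylinder measures $\omega^X_\pm$ of $\mathcal{C}^\pm(Q_0,Ms)$; here the $1\pm\varepsilon$ comparison of measure \emph{values} does hold, because $\mathcal{C}^+\subset\widetilde\Omega\subset\mathcal{C}^-$ and the two cylinders differ only by a slab of width $O(\delta Ms)$. Together with Lemma~\ref{lem2.7} this gives the asymptotically optimal doubling for $\tilde\omega^X$ (Theorem~\ref{Thm4.2}). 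Second, and this is the step you skip, one passes from $\tilde\omega^X$ to $\omega^X$. This cannot be done by comparing values; it is done by comparing \emph{ratios}. The paper shows (Corollary~\ref{cor4.2}) that $\omega^X(E)/\omega^X(E')\approx(1\pm\varepsilon)\,\tilde\omega^X(E)/\tilde\omega^X(E')$ for $E,E'\subset\partial\Omega\cap B(Q_0,2s)$, by first identifying, up to a $(1+\varepsilon)$-factor, the Radon--Nikodym derivative $\dd\tilde\omega^X/\dd\omega^X(Q)$ with $\lim_{Y\to Q}\widetilde G(X,Y)/G(X,Y)$ (Lemma~\ref{lem4.3}) and then exploiting the H\"older continuity of that Green-function ratio on $B(Q_0,2s)$ (Theorem~\ref{Thm2.3}) to show it is nearly constant there when $M$ is large. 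In the harmonic case Lemma~\ref{lem4.3} is an identity; for $L\in\mathcal{L}(\lambda,\Lambda,\alpha)$ it requires a nontrivial integration-by-parts argument whose error terms are controlled by the $C^{0,\alpha}$ modulus of $A$ and by the flatness — this is a substantial piece of analysis absent from your proposal. Without this ratio-transfer step, you cannot pass from a doubling statement about $\tilde\omega^X$ (or $\omega_0^X$) to one about $\omega^X$, so the argument as written does not close.
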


\begin{cor}\label{cor4.1}
Let $\Omega\subset\R^{n+1}$ be a Reifenberg flat
domain with vanishing constant, and $L\in\mathcal{L}(\lambda,\Lambda,\alpha)$. Then for any $X\in \Omega$ and $\tau\in (0,1)$
$$
\lim_{\rho\rightarrow 0} \inf_{Q\in \partial\Omega}
\frac{\omega^{X}(\partial\Omega\cap B(Q,\tau\rho))}{\omega^{X}(\partial\Omega\cap B(Q,\rho))}=
\lim_{\rho\rightarrow 0} \sup_{Q\in \partial\Omega}
\frac{\omega^{X}(\partial\Omega\cap B(Q,\tau\rho))}{\omega^{X}(\partial\Omega\cap B(Q,\rho))}= \tau^{n}.
$$
\end{cor}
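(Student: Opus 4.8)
\textbf{Proof proposal for Corollary \ref{cor4.1}.}

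The plan is to deduce the corollary from Theorem \ref{Thm4.1} together with the fact that $\Omega$ being Reifenberg flat with vanishing constant means $\theta(r)\to 0$ as $r\to 0$, so that for every $\delta>0$ there is a scale below which $\Omega$ looks like a $(\delta,\rho_\delta)$-Reifenberg flat domain. First I would fix $X\in\Omega$ and $\tau\in(0,1)$, and fix a target accuracy $\varepsilon>0$. Apply Theorem \ref{Thm4.1} with this $\varepsilon$ and with the ratio $r/s=\tau$: this produces a threshold $M=M(n,\varepsilon,\alpha)$ and then a $\delta=\delta(\varepsilon,\alpha,M,\tau)>0$ such that on any $(\delta,R)$-Reifenberg flat domain one has the two-sided bound $(1-\varepsilon)\tau^n\le \omega^X(B(Q_1,\tau\rho))/\omega^X(B(Q_2,\rho))\le(1+\varepsilon)\tau^n$ whenever $Q_1,Q_2\in\partial\Omega\cap B(Q_0,\rho)$ with $\rho\le R/M$ and $|X-Q_0|\ge M\rho$.

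The key reduction is that although $\Omega$ is only assumed Reifenberg flat with vanishing constant (not necessarily $(\delta,R)$-flat for this particular small $\delta$ at the original scale $R$), condition (\ref{1.5}) guarantees $\limsup_{r\to 0}\theta(r)=0$, hence there is $R_\delta>0$ with $\sup_{0<r\le R_\delta}\theta(r)\le\delta$, i.e. $\Omega$ is $(\delta,R_\delta)$-Reifenberg flat. Then Theorem \ref{Thm4.1} applies verbatim with $R$ replaced by $R_\delta$. Next I would observe that since $X\in\Omega$ is fixed and $\delta(X)=\dist(X,\partial\Omega)>0$, for every $Q_0\in\partial\Omega$ and all sufficiently small $\rho$ (namely $\rho<\delta(X)/M$, which also forces $\rho<R_\delta/M$ after shrinking) we automatically have $|X-Q_0|\ge\delta(X)\ge M\rho$. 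Taking $Q_1=Q_2=Q$ and writing $\rho$ in place of $s$, Theorem \ref{Thm4.1} yields, for all $\rho<\rho_0:=\min\{\delta(X)/M,\,R_\delta/M\}$ and \emph{every} $Q\in\partial\Omega$,
$$
(1-\varepsilon)\,\tau^n\ \le\ \frac{\omega^X(B(Q,\tau\rho))}{\omega^X(B(Q,\rho))}\ \le\ (1+\varepsilon)\,\tau^n.
$$
Since this holds for every $Q\in\partial\Omega$, it holds for the supremum and the infimum over $Q$; letting $\rho\to 0$ gives
$$
(1-\varepsilon)\,\tau^n\ \le\ \liminf_{\rho\to0}\inf_{Q}\frac{\omega^X(B(Q,\tau\rho))}{\omega^X(B(Q,\rho))}\ \le\ \limsup_{\rho\to0}\sup_{Q}\frac{\omega^X(B(Q,\tau\rho))}{\omega^X(B(Q,\rho))}\ \le\ (1+\varepsilon)\,\tau^n,
$$
and since $\varepsilon>0$ was arbitrary, all four quantities equal $\tau^n$, which is the claim (after noting $\omega^X(B(Q,\rho))=\omega^X(\partial\Omega\cap B(Q,\rho))$ since $\omega^X$ is supported on $\partial\Omega$).

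There is essentially no analytic obstacle here: the whole content has been pushed into Theorem \ref{Thm4.1}, and what remains is a quantifier-juggling argument. The one point that requires a little care is the order of quantifiers in Theorem \ref{Thm4.1}: $\delta$ is allowed to depend on the ratio $r/s$, so it is important that I fix the ratio to be exactly $\tau$ \emph{before} extracting $\delta$, rather than trying to let the ratio vary; fixing $\tau$ at the outset makes this automatic. A second minor point is making sure the scale $\rho_0$ below which everything works is uniform in $Q\in\partial\Omega$ — this is fine because $\delta(X)$ and $R_\delta$ do not depend on $Q$, and $M$ depends only on $n,\varepsilon,\alpha$. Thus the supremum and infimum over $Q\in\partial\Omega$ can be controlled simultaneously, and the limit statement follows.
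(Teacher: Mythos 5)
Your argument is correct and is exactly the derivation the paper intends (the paper states Corollary \ref{cor4.1} without proof, immediately after Theorem \ref{Thm4.1}, so this reduction is the implicit proof). You correctly fix $\tau$ before extracting $\delta$ (since $\delta$ is allowed to depend on $r/s$ in Theorem \ref{Thm4.1}), use the vanishing-constant hypothesis to get $(\delta,R_\delta)$-flatness at small scales, and note that $\rho_0 = \min\{\dist(X,\partial\Omega)/M, R_\delta/M\}$ is uniform in $Q$ so the bounds pass to the infimum and supremum before taking $\rho\to 0$ and then $\varepsilon\to 0$.
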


The main idea of the proof is to compare the
elliptic measure of a Reifenberg flat domain with the elliptic measure of an
appropriate cylinder. In order to do this we need to introduce some extra notation.

Let $\Omega\subset \R^{n+1}$ be a $(\delta,R)$-Reifenberg flat domain, with
$\delta\leq \delta_{0}$. Let
$M>1$ be a large number to be determined later,
let $s>0$ be so that $Ms\leq R$. There exists an
$n$-dimensional plane $\mathcal{L}(Q_{0},Ms)$ containing $Q_{0}$ and such that
$$
\frac{1}{Ms}D[\partial\Omega\cap B(Q_{0},Ms),\mathcal{L}(Q_{0},Ms)\cap B(Q_{0},Ms)]\leq \delta,
$$
$$
T^{+}(Q_{0}, Ms)\subset\Omega\ \ \ {\rm{and}}\ \ \ T^{-}(Q_{0}, Ms)\subset \Omega^{c}.
$$
In particular if we define for $r=Ms$ or $r=Ms/2$
$$
\widetilde{\Omega}(Q_{0},Ms)=\Omega\cap
\{(x,t)\in\R^{n+1}: x\in \mathcal{L}(Q_{0}, Ms),\ |x-Q_{0}|\leq \frac{Ms}{\sqrt{n+1}},\
|t|\leq \frac{Ms}{\sqrt{n+1}}\},
$$
$$
\mathcal{C}^{+}(Q_{0},r)
=\{(x,t)\in\R^{n+1}: x\in \mathcal{L}(Q_{0},Ms),\
|x-Q_{0}|\leq \frac{r}{\sqrt{n+1}},\
2\delta r\leq t\leq \frac{r}{\sqrt{n+1}}\},
$$
$$
\mathcal{C}^{-}(Q_{0},r)
=\{(x,t)\in\R^{n+1}: x\in  \mathcal{L}(Q_{0}, Ms),\
|x-Q_{0}|\leq \frac{r}{\sqrt{n+1}},\
-2\delta r\leq t\leq \frac{r}{\sqrt{n+1}}\}
$$
and
$$
\mathcal{C}(Q_{0},Ms/2)
=\{(x,t)\in\R^{n+1}: x\in \mathcal{L}(Q_{0},Ms),\ |x-Q_{0}|
\leq \frac{Ms}{2\sqrt{n+1}}
,\ |t|\leq \frac{Ms}{2\sqrt{n+1}}\},
$$
then
$$
\mathcal{C}^{+}(Q_{0}, Ms)\subset\widetilde{\Omega}(Q_{0},Ms)\subset \mathcal{C}^{-}(Q_{0},Ms),
\ \ \ {\rm{and}} \ \ \
\mathcal{C}^{+}(Q_{0},Ms)\subset T^{+}(Q_{0},Ms).
$$
Note that the Hausdorff distance between $\mathcal{C}^{+}(Q_{0},Ms)$ and
$\mathcal{C}^{-}(Q_{0},Ms)$ is $4\delta Ms$. Besides if $n_{Ms,Q_{0}}$ denotes
the unit normal to $\mathcal{L}(Q_{0},Ms)$ chosen with the appropriate orientation,
then
$$
A(Q_{0},Ms)=Q_{0}+\frac{Ms}{4\sqrt{n+1}}n_{Ms,Q_{0}}\in
\mathcal{C}^{+}(Q_{0},Ms/2),
$$
for $\delta$ small enough
$$
B\bigg(A(Q_0,Ms),\frac{Ms}{8\sqrt{n+1}}\bigg)\subset \mathcal{C}^{+}(Q_{0},Ms/2)
$$
and
$$\dist\bigg[B\bigg(A(Q_0,Ms),\frac{Ms}{8\sqrt{n+1}}\bigg),
\partial\mathcal{C}^{+}(Q_{0},Ms/2)\bigg]\geq \frac{Ms}{16\sqrt{n+1}}.$$
\begin{remark}\label{rem4.2}
If $\Pi$ denotes the orthogonal projection from $\R^{n+1}$ onto
$\mathcal{L}(Q_{0},Ms)$ then
$$\Pi(\widetilde{\Omega}(Q_{0},Ms))=\{x\in \mathcal{L}(Q_{0},Ms): |x-Q_{0}|\leq
\frac{Ms}{\sqrt{n+1}}\}.$$
\end{remark}
Next we introduce the sets which arise from the intersection of
$\partial\widetilde{\Omega}(Ms, Q_{0})$ and cylinders having direction
$n_{Ms,Q_{0}}$. We denote by
$$
\Gamma(Q_{0},s)=\{(x,t)\in \R^{n+1}: x\in \mathcal{L}(Q_{0},Ms),\
|x-Q_{0}|\leq s,\ |t|\leq \frac{Ms}{\sqrt{n+1}}\}\cap
\partial\widetilde{\Omega}(Q_{0},Ms)
$$
and by
$$
\Gamma(Q,r)=\{(x,t)\in \R^{n+1}: x\in \mathcal{L}(Q_{0},Ms),\ |x-\Pi(Q)|\leq
r,\ |t|\leq \frac{Mr}{\sqrt{n+1}}\}\cap\partial\widetilde{\Omega}(Q_{0},Ms),
$$
for $Q\in\Gamma(Q_{0},s)$ and $r>0$ small enough so that
$\Gamma(Q,r)\subset\Gamma(Q_{0},s)$. In particular if $r=\tau s$
for some $\tau\in (4\delta M, 1)$, then
\begin{equation}\label{4.1}
\Gamma\bigg(Q,r{\sqrt{1-\bigg(\frac{2\delta M}{\tau}\bigg)^{2}}}\bigg)\subset\partial\Omega\cap B(Q,r)\subset\Gamma(Q,r).
\end{equation}
If $\tau$ is relatively large with respect to $2\delta M$, the
projections of these 3 sets on $L(Q_{0},Ms)$ have almost the same
area. In fact recall that $|\Pi(\Gamma(Q,r))|=\omega_{n}r^{n}$.

Let us denote by $\tilde{\omega}$ the elliptic measure of
$\widetilde{\Omega} (Ms, Q_{0})$ and by $\omega_{\pm}$ the
elliptic measures of $\mathcal{C}^{\pm}(Q_{0},Ms)$.

\begin{lemma}\label{lem4.1}
Given $\varepsilon>0$, for $L\in\mathcal{L}(\lambda,\Lambda,\alpha)$, there exists $M(n,\varepsilon,\alpha)>1$ such that
if $M\geq M(n,\varepsilon,\alpha)$ there exists
$\delta(\varepsilon,\alpha, M,r/s)=\delta>0$ so that if $\Omega$ is a
$(\delta, R)$-Reifenberg flat domain, then for $0<r\leq s\leq
{R/M}$, $Q_{0},Q\in\partial\Omega$, $B(Q,r)\subset B(Q_0,s)$, and $X\in\partial\mathcal{C}(Q_{0},Ms/2)\cap
\mathcal{C}^{+}(Q_{0},Ms)$,
\begin{equation}\label{4.2}
(1-\varepsilon)\omega^{X}_{+}(\Delta_{+}(Q_{+},r))\leq
\tilde{\omega}^{X}(\partial\Omega\cap B(Q,r)) \leq
(1+\varepsilon)\omega^{X}_{-}(\Delta_{-}(Q_{-},r))
\end{equation}
where $Q_{\pm}=\Pi(Q)\pm 2\delta Ms n_{Ms, Q_{0}}$, and
$\Delta_{\pm}(Q_{\pm},r)=B(Q_{\pm},r)\cap
\partial\mathcal{C}^{\pm}(Q_{0},Ms)$.
\end{lemma}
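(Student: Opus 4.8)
The plan is to sandwich $\widetilde\Omega(Q_0,Ms)$ between the two cylinders $\mathcal{C}^{\pm}(Q_0,Ms)$ and use the maximum principle to compare the three elliptic measures, then absorb the discrepancy coming from the $4\delta Ms$ gap between the cylinders into the factor $(1\pm\varepsilon)$ via Lemma \ref{lem2.7}. First I would fix $\varepsilon>0$, apply Lemma \ref{lem2.7} with $\varepsilon/3$ (say) to obtain $M_0=M_0(n,\varepsilon,\alpha)$, and set $M=M(n,\varepsilon,\alpha)\ge M_0$; this pins down the ``internal'' distortion on the cylinders and guarantees $X\in\partial\mathcal{C}^{\pm}(Q_0,Ms/2)\cap\mathcal{C}^{\pm}(Q_0,Ms)$ is a legitimate pole there. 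With $M$ fixed, $\delta$ will be chosen small at the very end, depending on $\varepsilon,\alpha,M$ and the ratio $r/s$.

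\medskip

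For the upper bound I would argue as follows. By the inclusions $\mathcal{C}^{+}(Q_0,Ms)\subset\widetilde\Omega(Q_0,Ms)\subset\mathcal{C}^{-}(Q_0,Ms)$ and the maximum principle, $\widetilde\omega^{X}(\partial\Omega\cap B(Q,r))$ is bounded above by $\omega_{-}^{X}$ of the portion of $\partial\mathcal{C}^{-}(Q_0,Ms)$ that ``sees'' $\partial\Omega\cap B(Q,r)$; more precisely, I would show that the $L$-harmonic extension (in $\mathcal{C}^{-}$) of the indicator of $\Delta_{-}(Q_{-},r)$ dominates, on $\partial\widetilde\Omega$, the indicator of $\partial\Omega\cap B(Q,r)$. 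This uses the Reifenberg flatness (\ref{1.3}) together with (\ref{4.1}): since $\partial\Omega\cap B(Q,r)$ lies within the vertical cylinder $\Gamma(Q,r)$ and the latter, translated down by $2\delta Ms$ in the $n_{Ms,Q_0}$ direction, is contained in the relevant piece of $\partial\mathcal{C}^{-}$, comparison of boundary data gives $\widetilde\omega^{X}(\partial\Omega\cap B(Q,r))\le\omega_{-}^{X}(\Delta_{-}(Q_{-},r))$ for $X$ in the closure of $\widetilde\Omega$. The constant $1$ here is not quite right because of the exterior corkscrew geometry near $\partial\Omega$, so one really obtains $\le(1+\varepsilon)\omega_{-}^{X}(\cdot)$ after invoking Hölder continuity (Theorem \ref{Thm2.3}) or a Carleson-type estimate together with Lemma \ref{lem2.7} to control the shape mismatch; the point is that as $\delta\to0$ the set $\widetilde\Omega$ converges to the half-space and the distortion tends to $1$. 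The lower bound with $\omega_{+}$ is symmetric: here $\mathcal{C}^{+}(Q_0,Ms)\subset\widetilde\Omega$, and on $\partial\mathcal{C}^{+}$ one compares the indicator of $\Delta_{+}(Q_{+},r)$ from below with $\widetilde\omega$-harmonic data restricted to that piece, using the other inclusion in (\ref{4.1}) (the slightly smaller radius $r\sqrt{1-(2\delta M/\tau)^2}$, which is $\ge(1-\varepsilon)^{1/n}r$ once $\delta$ is small relative to $\tau=r/s$), so that $\omega_{+}^{X}(\Delta_{+}(Q_{+},r))$ differs from $\omega_{+}^{X}$ of the truncated disk by at most a factor $1-\varepsilon$ by Lemma \ref{lem2.7}.

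\medskip

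The main obstacle is the comparison between $\widetilde\omega$ and $\omega_{\pm}$ when the pole $X$ is close to the boundary, i.e.\ making the constant genuinely $(1\pm\varepsilon)$ rather than a fixed dimensional constant. The clean way around this is to observe that $X$ is pinned on $\partial\mathcal{C}(Q_0,Ms/2)\cap\mathcal{C}^{+}(Q_0,Ms)$, hence stays a definite distance $\gtrsim Ms$ away from all three relevant boundary pieces; then one uses that, in the region where all three domains agree (namely in $T^{+}(Q_0,Ms)$, away from the $4\delta Ms$-neighbourhood of the planes), the $L$-harmonic measures of the target sets differ only through their boundary values, and those boundary values have already been compared above. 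Concretely I would write $\widetilde\omega^{X}(\partial\Omega\cap B(Q,r))=\int h_{-}\,d\widetilde\omega^{X}$ where $h_{-}$ is the $\omega_{-}$-harmonic extension of $\mathbf 1_{\Delta_{-}(Q_{-},r)}$, use $h_{-}\ge\mathbf 1_{\partial\Omega\cap B(Q,r)}$ on $\partial\widetilde\Omega$ and $h_{-}$ $L$-harmonic in $\widetilde\Omega\subset\mathcal{C}^{-}$, hence $\int h_{-}\,d\widetilde\omega^{X}=h_{-}(X)=\omega_{-}^{X}(\Delta_{-}(Q_{-},r))$; symmetrically for the lower bound. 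The remaining slack $(1\pm\varepsilon)$ is then exactly the error in the two geometric inclusions in (\ref{4.1}), quantified by Lemma \ref{lem2.7}, and is arranged by choosing $\delta=\delta(\varepsilon,\alpha,M,r/s)$ small enough that $\sqrt{1-(2\delta M/\tau)^2}\ge(1-\varepsilon)^{1/n}$ with $\tau=r/s$, and that the shape distortion of $\Delta_{\pm}(Q_{\pm},r)$ versus a true geodesic disk on $\partial\mathcal{C}^{\pm}$ contributes at most another factor $1\pm\varepsilon$. This completes the proof of (\ref{4.2}).
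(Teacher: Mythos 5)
Your overall strategy---sandwiching $\widetilde\Omega(Q_0,Ms)$ between $\mathcal{C}^{\pm}(Q_0,Ms)$, comparing boundary data via the maximum principle, and invoking Lemma~\ref{lem2.7} for the $(1\pm\varepsilon)$ distortion on the cylinders---is the right one, and it is what the paper (following the corresponding lemma in \cite{kt1}) does. However, the pivotal step in your third paragraph is false as stated: the inequality $h_-\ge\mathbf 1_{\partial\Omega\cap B(Q,r)}$ on $\partial\widetilde\Omega(Q_0,Ms)$ cannot hold. Indeed, $h_-(\cdot)=\omega_-^{\cdot}(\Delta_-(Q_-,r))$ is continuous inside $\mathcal{C}^-(Q_0,Ms)$, and for a point $Y\in\partial\Omega\cap B(Q,r)$ sitting at height $\lesssim\delta Ms$ directly over the \emph{rim} of the disk $\Delta_-(Q_-,r)$ (i.e.\ with $|\Pi(Y)-\Pi(Q)|\approx r$), $h_-(Y)$ is bounded away from $1$. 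So the ``clean'' comparison $\widetilde\omega^X(\partial\Omega\cap B(Q,r))\le\omega_-^X(\Delta_-(Q_-,r))$ with constant $1$ does \emph{not} follow from the maximum principle, and the slack in (\ref{4.2}) is \emph{not} ``exactly the error in the two geometric inclusions in (\ref{4.1})'' plus Lemma~\ref{lem2.7}. You also misidentify the obstacle as being the pole $X$ approaching the boundary: $X$ stays on $\partial\mathcal{C}(Q_0,Ms/2)$, at distance $\gtrsim Ms$ from all the relevant boundary pieces, and that is not where the difficulty lies.

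What the argument actually needs---and what the paper points to by citing Lemma~\ref{lem2.1} together with Lemma~\ref{lem2.7}---is the boundary H\"older decay estimate Lemma~\ref{lem2.1} (not Theorem~\ref{Thm2.3}, which concerns ratios). Concretely, for the upper bound one enlarges the reference disk to $\Delta_-(Q_-,(1+\eta)r)$ for a small $\eta>0$; every $Y\in\partial\Omega\cap B(Q,r)$ then lies at height $\lesssim\delta Ms$ over the interior of this enlarged disk at horizontal distance $\gtrsim\eta r$ from its rim, and Lemma~\ref{lem2.1} applied to $1-h_{-,(1+\eta)r}$ (a nonnegative $L$-solution in $\mathcal{C}^-$ vanishing on the enlarged disk) gives $1-h_{-,(1+\eta)r}(Y)\lesssim(\delta Ms/\eta r)^\beta=(\delta M/\eta\tau)^\beta$. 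This is where the dependence $\delta=\delta(\varepsilon,\alpha,M,r/s)$ genuinely enters: with $M$ already fixed as in Lemma~\ref{lem2.7}, first pick $\eta$ small so that $\omega_-^X(\Delta_-(Q_-,(1+\eta)r))\le(1+\varepsilon/2)\,\omega_-^X(\Delta_-(Q_-,r))$, and then pick $\delta$ small relative to $\eta\tau/M$ so that the H\"older correction is $\le\varepsilon/2$. Only after this two-step choice does the maximum-principle comparison close, and symmetrically (shrinking the disk and applying Lemma~\ref{lem2.1} to $h_+$ itself near the rim) for the lower bound. You gesture at this in your second paragraph (``invoking H\"older continuity\ldots to control the shape mismatch''), but you then drop it in the third, so as written the proposal has a genuine gap precisely at the edge of the surface ball.
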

\begin{proof}
The basic idea is to compare the appropriate solutions of $Lu=0$ in $\mathcal{C}^{\pm}(Ms, Q_{0})$ and
$\widetilde{\Omega}(Ms, Q_{0})$ in order to apply the maximum
principle. Since Lemmata \ref{lem2.1} and \ref{lem2.7} are
valid we may adopt the proof of Lemma 4.1 in \cite{kt1}.
\end{proof}

The following lemma gives the opposite estimate when the pole is far away
from the boundary.

\begin{lemma}\label{lem4.2}
Given $\varepsilon>0$, for $L\in\mathcal{L}(\lambda,\Lambda,\alpha)$, there exists $M(n,\varepsilon,\alpha)>1$, so that
if $M\geq M(n,\varepsilon,\alpha)$ for $\kappa>2$ there exists
$\delta(\varepsilon,\alpha, M,\kappa,r/s)=\delta>0$ such that if $\Omega$
is a $(\delta, R)$-Reifenberg flat domain, then for $0<r\leq s\leq
{R/M}$, $Q_{0}\in\partial\Omega$, $Q\in\partial\Omega$, $B(Q,r)\subset B(Q_0,s)$, and $X=(x,t)\in\partial\mathcal{C}(Q_0,Ms/2)\cap\mathcal{C}^{+}(Q_0,Ms)$, with $t\geq
Ms/\kappa\sqrt{n+1}$,
\begin{equation}\label{4.14}
(1-\varepsilon)\omega^{X}_{-}(\Delta_{-}(Q_{-},r))\leq
\tilde{\omega}^{X}(B(Q,r)) \leq
(1+\varepsilon)\omega^{X}_{+}(\Delta_{+}(r, Q_{+})).
\end{equation}
where $Q_{\pm}=\Pi(Q)\pm 2\delta Ms n_{Ms, Q_{0}}$, and
$\Delta_{\pm}(Q_{\pm},r)=B(Q_{\pm},r)\cap
\partial\mathcal{C}^{\pm}(Q_{0},Ms)$.
\end{lemma}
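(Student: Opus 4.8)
The plan is to mirror the structure of Lemma \ref{lem4.1}, but now exploit the fact that the pole $X=(x,t)$ lies high up in the cylinder (in the region $t\ge Ms/\kappa\sqrt{n+1}$) so that the roles of the inner and outer cylinders $\mathcal{C}^{\pm}(Q_0,Ms)$ get reversed relative to the previous lemma. First I would set up the three nested domains as in the paragraph preceding Lemma \ref{lem4.1}: $\mathcal{C}^{+}(Q_0,Ms)\subset\widetilde{\Omega}(Q_0,Ms)\subset\mathcal{C}^{-}(Q_0,Ms)$, with associated elliptic measures $\omega_{+}$, $\tilde\omega$, $\omega_{-}$, and the shifted centers $Q_{\pm}=\Pi(Q)\pm 2\delta Ms\,n_{Ms,Q_0}$. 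The key observation is that because $X$ is in the top part of the cylinder, the surface balls $\Delta_{\pm}(Q_\pm,r)$ on $\partial\mathcal{C}^{\pm}$ and the boundary piece $\partial\Omega\cap B(Q,r)$ all "see" $X$ from essentially the same solid angle, and the containments of the domains now push the inequalities in the \emph{opposite} direction from \eqref{4.2}. Concretely, I would compare the solution $u^{+}$ in $\mathcal{C}^{+}$ with boundary data (roughly) the indicator of $\Delta_{+}(Q_+,r)$, the solution $\tilde u$ in $\widetilde{\Omega}$ with data the indicator of $\partial\Omega\cap B(Q,r)$, and the solution $u^{-}$ in $\mathcal{C}^{-}$ with data the indicator of $\Delta_{-}(Q_-,r)$, and use the maximum principle together with the inclusion \eqref{4.1} (which relates $\partial\Omega\cap B(Q,r)$ to the cylindrical slices $\Gamma(Q,\cdot)$) to sandwich $\tilde\omega^X(B(Q,r))$ between $\omega_{-}^X(\Delta_-)$ and $\omega_{+}^X(\Delta_+)$.

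The heart of the argument is the quantitative control of how much $u^{+}$, $\tilde u$, $u^{-}$ differ on the relevant separating surfaces. Here is where the hypothesis $t\ge Ms/\kappa\sqrt{n+1}$ enters: on the lateral separating boundaries between the three domains (which lie within Hausdorff distance $4\delta Ms$ of each other near $\mathcal{L}(Q_0,Ms)$), each of the three elliptic measures of $\Delta(Q_\pm,r)$ resp. $B(Q,r)$ is comparable, with constant $1+\varepsilon$, by Lemma \ref{lem2.7} applied in the half-space picture (after flattening), since the pole $X$ stays a definite distance $\simeq Ms/\kappa$ from those flat portions. The loss across the thin region of width $4\delta Ms$ is controlled by Lemma \ref{lem2.1} (the boundary decay estimate) plus Lemma \ref{lem2.5} / Theorem \ref{Thm2.3} (boundary Harnack and Hölder continuity of quotients), which bound the discrepancy by $C(\delta M)^{\beta}$ times the "expected" size $\omega_{\pm}^X(\Delta_\pm(Q_\pm,r))$. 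Choosing first $M=M(n,\varepsilon,\alpha)$ large (so that Lemma \ref{lem2.7} gives the $1+\varepsilon$ comparisons) and then $\delta=\delta(\varepsilon,\alpha,M,\kappa,r/s)$ small (so that $C(\delta M)^{\beta}<\varepsilon$ after accounting for the ratio $r/s$ in the number of surface balls one must cover), one obtains \eqref{4.14}.

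I expect the main obstacle to be bookkeeping the geometry near the lateral boundary of the cylinders: one must check that $\Delta_{\pm}(Q_\pm,r)=B(Q_\pm,r)\cap\partial\mathcal{C}^{\pm}(Q_0,Ms)$ really does capture (up to the $(1\pm\varepsilon)$ distortion) the part of $\partial\mathcal{C}^\pm$ that the harmonic-measure comparison transfers to $\partial\Omega\cap B(Q,r)$, using \eqref{4.1} and Remark \ref{rem4.2}, and that the shifts by $\pm2\delta Ms\,n_{Ms,Q_0}$ are the correct ones so that $\Delta_+(Q_+,r)$ sits inside the image of $\partial\Omega\cap B(Q,r)$ under the near-identity correspondence while $\Delta_-(Q_-,r)$ contains it. Once that is arranged, the maximum principle comparisons are routine given Lemmata \ref{lem2.1}, \ref{lem2.5}, \ref{lem2.7} and Theorem \ref{Thm2.3}. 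As in Lemma \ref{lem4.1}, since all the ingredients (Lemmata \ref{lem2.1}, \ref{lem2.7}, and the boundary Harnack principle) hold verbatim for $L\in\mathcal{L}(\lambda,\Lambda,\alpha)$, one may follow the corresponding argument in \cite{kt1} for the Laplacian, changing only the cited estimates to their $L$-elliptic counterparts.
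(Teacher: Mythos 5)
Your high‑level intuition — that the hypothesis $t\ge Ms/\kappa\sqrt{n+1}$ is what makes the three measures nearly equal — is correct, but the mechanism you describe does not actually produce the reversed inequality. Specifically, the assertion that ``the containments of the domains now push the inequalities in the opposite direction from \eqref{4.2}'' is false: the inclusions $\mathcal{C}^+\subset\widetilde{\Omega}\subset\mathcal{C}^-$ combined with the maximum principle \emph{always} give the same‑direction comparison $\omega_+\lesssim\tilde\omega\lesssim\omega_-$ (which is what Lemma~\ref{lem4.1} records), regardless of where the pole sits. So a direct sandwich of $\tilde\omega^X(B(Q,r))$ between $\omega_-^X(\Delta_-)$ and $\omega_+^X(\Delta_+)$ by domain inclusion cannot work, and you have not supplied the ingredient that reverses the comparison.

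What the paper actually does is prove a genuinely new inequality, call it \eqref{4.15}: for $X$ far from the boundary one has
$\omega_-^X(\Delta_-(Q_-,r))\le \frac{1+\varepsilon'}{1-\varepsilon'}\,\omega_+^X(\Delta_+(Q_+,r))$, i.e.\ the \emph{opposite} of what containment suggests. This is obtained by working only with the two model cylinders, not with $\widetilde\Omega$: one translates $u_1=\omega_-^{(\cdot)}(\Delta_-)$ vertically by $4\delta Ms$ so that its bottom boundary coincides with that of $\mathcal{C}^+$, observes that the translated $u_1$ and $u_2=\omega_+^{(\cdot)}(\Delta_+)$ agree on the bottom and lateral parts of $\partial\mathcal{C}^+$, controls the translated $u_1$ on the top face by the boundary decay Lemma~\ref{lem2.1}, adds an auxiliary barrier $v$ so that $u_1(\cdot-4\delta Ms\,e_{n+1})\le u_2+K_1\delta^\beta v$ on all of $\partial\mathcal{C}^+$, and then on the interior region $\mathcal{R}$ where $X$ lives uses the Hopf lemma and Harnack to give a lower bound $u_2\ge K_2$ and interior Schauder estimates to undo the translation with error $C(\delta\kappa)^\beta$. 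Once \eqref{4.15} is in hand, \eqref{4.14} is a two‑line algebraic consequence of \eqref{4.15} \emph{together with} Lemma~\ref{lem4.1}. Your proposal neither isolates \eqref{4.15} nor invokes Lemma~\ref{lem4.1} in the combination; moreover the tools you cite for the quantitative step (Lemma~\ref{lem2.7}, Lemma~\ref{lem2.5}, Theorem~\ref{Thm2.3}) do not directly address comparing solutions in \emph{two different} cylinders — Lemma~\ref{lem2.7} compares surface balls of the \emph{same} cylinder, and Lemma~\ref{lem2.5}/Theorem~\ref{Thm2.3} are for quotients of solutions vanishing on the \emph{same} boundary portion. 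The crucial pieces that are actually used (the translation trick, the barrier $v$, Hopf+Harnack away from the boundary, interior Schauder to absorb the shift) are missing from your sketch, so as written the argument has a genuine gap.
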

\begin{proof}
Let $\varepsilon'=\varepsilon'(\varepsilon)>0$ to be chosen later.
We first prove that, for $M>2$ there exists
$0<\delta(\varepsilon,M,\kappa,r/s)$ so that
\begin{equation}\label{4.15}
\omega^{X}_{-}(\Delta_{-}(Q_{-},r))\leq
\frac{1+\varepsilon'}{1-\varepsilon'} \omega^{X}_{+}(\Delta_{+}(
Q_{+},r)).
\end{equation}
Let us first show how to obtain (\ref{4.14}) from (\ref{4.15}).
Choose $M$ large as in Lemma \ref{lem4.1}. Denote
$\delta':=\delta(\varepsilon',M, {r/s})$ the constant in that
lemma. Then for $\delta\leq
\min\{\delta',\delta(\varepsilon',M,\kappa,r/s)\}$, inequality
(\ref{4.2}) holds with $\varepsilon'$ instead of $\varepsilon$.
Combining (\ref{4.2}) and (\ref{4.15}) we obtain
$$
\frac{(1-\varepsilon')^{2}}
{1+\varepsilon'}\omega^{X}_{-}(\Delta_{-}(Q_{-},r))\leq
\tilde{\omega}^{X}(B(Q,r))\leq
\frac{(1+\varepsilon')^{2}}{1-\varepsilon'}
\omega^{X}_{+}(\Delta_{+}(Q_{+},r)).
$$
Choosing $\varepsilon'>0$ so that $1-\varepsilon\leq
(1-\varepsilon')^{2}/ (1+\varepsilon')$ and
$(1+\varepsilon')^{2}/(1-\varepsilon')\leq 1+\varepsilon$ we
obtain inequality (\ref{4.14}).

Now we continue with the proof of (\ref{4.15}). Recall that
$\mathcal{C}^{+}(Q_{0},Ms)\subset \widetilde{\Omega}(Q_{0},Ms)
\subset \mathcal{C}^{-}(Q_{0},Ms)$. Assume that
$\delta\leq\delta'$ and define
$$
u_{1}(x,t)=\omega_{-}^{(x,t)}(\Delta_{-}(Q_{-},r)) \ \ {\rm{for}}\ \
(x,t)\in \mathcal{C}^{-}(Q_{0},Ms),
$$
and
$$
u_{2}(x,t)=\omega_{+}^{(x,t)}(\Delta_{+}(Q_{+},r)) \ \ {\rm{for}}\ \
(x,t)\in \mathcal{C}^{+}(Q_{0},Ms).
$$
We compare $u_{1}(x, t-4\delta Ms)$ and $u_{2}(x,t)$ for
$(x,t)\in\partial \mathcal{C}^{+}(Q_{0},Ms)$. First note that if
$t=2\delta Ms$ or $|x-Q_{0}|= Ms/\sqrt{n+1}$ then $u_{1}(x,
t-4\delta Ms)= u_{2}(x,t)$. Indeed, if $t=2\delta Ms$ then
$u_{1}(x, t-4\delta Ms)$ vanishes for
$|x-\Pi(Q)|\geq r$ and it is equal to one
otherwise. The function $u_2$ has the same behavior. When
$|x-Q_0|=Ms/\sqrt{n+1}>>r$ both functions vanish.

Since $\mathcal{C}^{-}(Q_{0},Ms)$ is an NTA domain, $u_{1}$ is
non negative on $\mathcal{C}^{-}(Q_{0},Ms)$ and $u_{1}(x,t)=0$
for $$(x,t)\in\partial\mathcal{C}^{-}(Q_{0},Ms)\cap \{(x,t)\in
\R^{n+1}: t>\frac{3Ms}{4\sqrt{n+1}}\},$$ we apply Lemma
\ref{lem2.1} to get
$$
u_{1}(x, \frac{Ms}{\sqrt{n+1}}-4\delta Ms)\leq K_{1}\delta^{\beta}
$$
where $K_1$ depends on the NTA constants of
$\mathcal{C}^-(Q_0,Ms)$. Now consider a bounded function $v(x,t)$ such that
$$\begin{cases}
Lv(x,t)=0,& in \ \ \mathcal{C}^+(Q_0,Ms) \\
v(x,t)=0,& on\ \ t=2\delta Ms \\
v(x,t)=1,& on \ \ t=Ms/\sqrt{n+1} \\
v(x,t)\geq 0,& on \ \ |x-Q_0|=Ms/\sqrt{n+1}.
\end{cases}$$
Therefore for $(x,t)\in
\partial\mathcal{C}^{+}(Q_{0},Ms)$
\begin{equation}\label{4.16}
u_{1}(x, t-4\delta Ms)\leq u_{2}(x,t) + K_{1}v(x,t)\delta^\beta .
\end{equation}
By the maximum principle inequality (\ref{4.16}) holds for all
$(x,t)\in \mathcal{C}^{+}(Q_{0},Ms)$. Let
$$
\mathcal{R}= \mathcal{C}^{+}(Q_{0},Ms)\cap \bigg\{
(x,t)\in\R^{n+1}: x\in \mathcal{L}(Q_{0},Ms), |x-Q_{0}|\leq
\frac{Ms}{\sqrt{n+1}}\bigg(1-{1/{\kappa}}\bigg),$$ $$
\frac{Ms}{\kappa\sqrt{n+1}}\leq t\leq \frac{Ms}{\sqrt{n+1}}
\bigg(1-\frac{1}{\kappa}\bigg)\bigg\}.
$$
From the Hopf maximum principle and Harnack's inequality for
$\delta$ small enough and for $Y\in {\mathcal R}$
$$
u_{2}(Y)\ge K_{2}=K_{2}(n,\lambda, \Lambda, \kappa,\tau, M).
$$
Choosing $\delta>0$ even smaller we have $K_1
v(x,t)\delta^\beta\leq K_1C\delta^\beta \leq\varepsilon'K_2$, therefore for $(x,t)\in
\mathcal{R}$ we obtain
\begin{equation}\label{4.19}
u_{1}(x, t-4\delta Ms)\leq (1+\varepsilon')u_{2}(x,t).
\end{equation}
Applying classical interior estimates (see \cite{gt}, chapter 8) we conclude that there exists $\beta>0$ such that for
$\delta>0$ small enough and for $(x,t)\in \mathcal{R}$,
\begin{equation}\label{4.20}
u_{1}(x, t-4\delta Ms)\geq (1-C(\delta\kappa)^\beta)u_{1}(x,t)\geq
(1-\varepsilon')u_{1}(x,t).
\end{equation}
Combining (\ref{4.19}) and (\ref{4.20}) we have
$$
u_{1}(x,t)\leq \frac{1+\varepsilon'}{1-\varepsilon'}u_{2}(x,t)
$$
for $(x,t)\in \mathcal{R}$ and the proof is concluded.
\end{proof}

The next theorem is an immediate consequence of Lemmata
\ref{lem4.1} and \ref{lem4.2}. We refer the reader to Theorem 4.2
of \cite{kt1} for the details of the proof.
\begin{thm}\label{Thm4.2}
Given $\varepsilon>0$ for $L\in\mathcal{L}(\lambda,\Lambda,\alpha)$, there exists $M(n,\varepsilon,\alpha)>1$, so that
if $M\geq M(n,\varepsilon,\alpha)$ there exists
$\delta(\varepsilon,M,r/s)=\delta>0$ such that for any
$(\delta,R)$-Reifenberg flat domain $\Omega\subset \R^{n+1}$, and
$0<r\leq s\leq R/M$ we have
$$
(1-\varepsilon)\bigg(\frac{r}{s}\bigg)^{n}\leq
\frac{\tilde{\omega}^{X}(B(Q_1,r))}{\tilde{\omega}^{X}(B(Q_2,s))}\leq
(1+\varepsilon)\bigg(\frac{r}{s}\bigg)^{n},
$$
where $Q_{1}, Q_{2}\in\partial\Omega\cap B(Q_0,s)$ for some
$Q_{0}\in\partial\Omega$, $X\in\widetilde{\Omega}(Q_{0},Ms)\backslash
\mathcal{C}(Q_0,Ms/2)$. Here $\tilde{\omega}$ denotes the
elliptic measure of $\widetilde{\Omega}(Ms,Q_{0})$.
\end{thm}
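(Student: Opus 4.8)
The plan is to obtain Theorem~\ref{Thm4.2} as a direct combination of the two--sided estimate of Lemma~\ref{lem4.1}, the reverse estimate of Lemma~\ref{lem4.2}, and the asymptotically optimal doubling of the cylinder elliptic measures from Lemma~\ref{lem2.7}, following Theorem~4.2 of \cite{kt1}. Fix $\varepsilon>0$ and let $\varepsilon'=\varepsilon'(\varepsilon)>0$ be chosen at the end; pick $M$ and $\kappa>2$ as in Lemmata~\ref{lem4.1} and~\ref{lem4.2} applied with $\varepsilon'$, and then take $\delta$ small enough that both lemmata hold with $\varepsilon'$ in place of $\varepsilon$. For $Q\in\partial\Omega$ write $Q_{\pm}=\Pi(Q)\pm 2\delta Ms\,n_{Ms,Q_0}$ and $\Delta_{\pm}(Q_{\pm},\rho)=B(Q_{\pm},\rho)\cap\partial\mathcal{C}^{\pm}(Q_0,Ms)$, as in those lemmata. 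Consider first a pole $X=(x,t)\in\partial\mathcal{C}(Q_0,Ms/2)\cap\mathcal{C}^{+}(Q_0,Ms)$ with $t\ge Ms/\kappa\sqrt{n+1}$, so that both lemmata are available. Pairing the upper bound of Lemma~\ref{lem4.1} with the lower bound of Lemma~\ref{lem4.2} shows that $\tilde\omega^{X}(B(Q,\rho))$ and $\omega_{-}^{X}(\Delta_{-}(Q_{-},\rho))$ agree up to factors $1\pm\varepsilon'$, and pairing the lower bound of Lemma~\ref{lem4.1} with the upper bound of Lemma~\ref{lem4.2} does the same with $\omega_{+}^{X}(\Delta_{+}(Q_{+},\rho))$; hence $\tilde\omega^{X}(B(Q,\rho))=(1+O(\varepsilon'))\,\omega_{+}^{X}(\Delta_{+}(Q_{+},\rho))$ for all such $X$ and $Q$ with $B(Q,\rho)\subset B(Q_0,2s)$.

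Now specialise to $Q=Q_1,\rho=r$ and $Q=Q_2,\rho=s$; since $Q_1,Q_2\in B(Q_0,s)$ and $r\le s$ the relevant balls lie in $B(Q_0,2s)$. Dividing the two resulting estimates,
$$
\frac{1-\varepsilon'}{1+\varepsilon'}\cdot\frac{\omega_{+}^{X}(\Delta_{+}(Q_{1,+},r))}{\omega_{+}^{X}(\Delta_{+}(Q_{2,+},s))}\ \le\ \frac{\tilde\omega^{X}(B(Q_1,r))}{\tilde\omega^{X}(B(Q_2,s))}\ \le\ \frac{1+\varepsilon'}{1-\varepsilon'}\cdot\frac{\omega_{+}^{X}(\Delta_{+}(Q_{1,+},r))}{\omega_{+}^{X}(\Delta_{+}(Q_{2,+},s))}.
$$
Since $\mathcal{C}^{+}(Q_0,Ms)$ is a cylinder over a piece of the affine hyperplane $\{t=2\delta Ms\}$, after the translation sending that hyperplane to $\{t=0\}$ Lemma~\ref{lem2.7} applies to its elliptic measure $\omega_{+}$: the centers $Q_{1,+},Q_{2,+}$ lie on the base of $\mathcal{C}^{+}(Q_0,Ms)$ within distance $\lesssim s$ of the projection of $Q_0$, the radii satisfy $0<r\le s$, and $X$ lies on $\partial\mathcal{C}^{+}(Q_0,Ms/2)\cap\mathcal{C}^{+}(Q_0,Ms)$, so $\omega_{+}^{X}(\Delta_{+}(Q_{1,+},r))/\omega_{+}^{X}(\Delta_{+}(Q_{2,+},s))$ lies within a factor $1\pm\varepsilon'$ of $(r/s)^{n}$. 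Chaining the two displays and choosing $\varepsilon'$ so that $(1+\varepsilon')^{2}/(1-\varepsilon')\le 1+\varepsilon$ and $(1-\varepsilon')^{2}/(1+\varepsilon')\ge 1-\varepsilon$ gives the conclusion of Theorem~\ref{Thm4.2} for every pole in $\partial\mathcal{C}(Q_0,Ms/2)\cap\mathcal{C}^{+}(Q_0,Ms)$ with $t\ge Ms/\kappa\sqrt{n+1}$.

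It remains to treat an arbitrary pole $X\in\widetilde\Omega(Q_0,Ms)\setminus\mathcal{C}(Q_0,Ms/2)$. For such $X$ one has $|X-Q_0|\ge Ms/2\sqrt{n+1}$, while $u(\cdot)=\tilde\omega^{\cdot}(B(Q_1,r))$ and $v(\cdot)=\tilde\omega^{\cdot}(B(Q_2,s))$ are nonnegative $L$-solutions in $\widetilde\Omega$ vanishing continuously on $\partial\widetilde\Omega\setminus B(Q_0,2s)$, hence on $\partial\widetilde\Omega\setminus\mathcal{C}(Q_0,Ms/2)$ once $M\ge 4\sqrt{n+1}$. Writing $D=\widetilde\Omega\setminus\overline{\mathcal{C}(Q_0,Ms/2)}$ and $I=\partial\mathcal{C}(Q_0,Ms/2)\cap\widetilde\Omega$ one represents $u(X)=\int_{I}u\,d\omega_D^{X}$ and $v(X)=\int_{I}v\,d\omega_D^{X}$. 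On the part of $I$ with $t$-coordinate $\ge Ms/\kappa\sqrt{n+1}$ the preceding paragraphs give $u=(1+O(\varepsilon'))(r/s)^{n}v$; on the part within distance $\lesssim\delta Ms$ of $\partial\Omega$, Lemma~\ref{lem2.1} gives $u,v\le C\delta^{\beta}$ (both vanish on $\partial\widetilde\Omega$ there and are $\le 1$), so after shrinking $\delta$ further, depending on $n$, the ellipticity constants, $M$, $\kappa$ and $r/s$, this part contributes a negligible fraction of $u(X)$ and of $v(X)$; the comparison principle (Lemma~\ref{lem2.5}) and the H\"older continuity of quotients (Theorem~\ref{Thm2.3}) near boundary points of $\widetilde\Omega$ far from $Q_0$ dispose of the remaining, intermediate, part of $I$. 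Combining these yields $u(X)/v(X)=(1+O(\varepsilon))(r/s)^{n}$, which is the assertion.

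The heart of the proof is this final reduction: propagating the sharp $1\pm\varepsilon$ comparison from the distinguished poles on $\partial\mathcal{C}(Q_0,Ms/2)$ to an arbitrary pole in $\widetilde\Omega(Q_0,Ms)\setminus\mathcal{C}(Q_0,Ms/2)$ without losing precision. This forces one to estimate, via the maximum principle, how little the harmonic measure $\omega_D^{X}$ puts on the portion of the interface $I$ lying near the flat boundary, and to combine this with a lower bound for $u(X)$ and $v(X)$ so that the $O(\delta^{\beta})$ terms are genuinely small relative to the main term; it is the reason the flatness parameter $\delta$ must be chosen small in terms of $\varepsilon$, $M$, $n$ and $r/s$, exactly as in \cite{kt1}. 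Everything else is the mechanical combination of Lemmata~\ref{lem4.1}, \ref{lem4.2} and~\ref{lem2.7} indicated above.
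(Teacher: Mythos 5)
Your proposal follows the route the paper indicates: combine Lemmata~\ref{lem4.1} and~\ref{lem4.2} with the cylinder doubling of Lemma~\ref{lem2.7}, then extend to arbitrary poles in $\widetilde{\Omega}(Q_0,Ms)\setminus\mathcal{C}(Q_0,Ms/2)$ as in Theorem~4.2 of \cite{kt1}. The first half is correct: chaining the lower bound of Lemma~\ref{lem4.1} with the upper bound of Lemma~\ref{lem4.2} (and vice versa) pins $\tilde\omega^{X}$ to $\omega_{+}^{X}$ to within $1\pm\varepsilon'$ for poles on $\partial\mathcal{C}(Q_0,Ms/2)\cap\mathcal{C}^{+}(Q_0,Ms)$ with $t\ge Ms/\kappa\sqrt{n+1}$, and Lemma~\ref{lem2.7} (after the obvious vertical translation) supplies the sharp $(r/s)^{n}$ ratio for $\omega_{+}$.

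The place where you should be more careful is the treatment of the near-boundary portion $I_{3}$ of the interface $I=\partial\mathcal{C}(Q_0,Ms/2)\cap\widetilde\Omega$. The bound $u,v\le C\delta^{\beta}$ from Lemma~\ref{lem2.1} is an absolute upper bound; to promote $\int_{I_{3}}u\,d\omega_{D}^{X}$ to a negligible \emph{fraction} of $u(X)=\int_{I}u\,d\omega_{D}^{X}$ uniformly over $X\in D=\widetilde\Omega\setminus\overline{\mathcal{C}(Q_0,Ms/2)}$ you would additionally need something like $\omega_{D}^{X}(I_{3})\lesssim\delta^{\beta'}\omega_{D}^{X}(I_{1})$ uniformly in $X$, since as $X$ approaches $\partial\widetilde\Omega$ both $u(X)$ and $\omega_{D}^{X}(I_{1})$ degenerate together; this boundary Harnack--type estimate is not invoked. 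The cleaner route, which is also what the paper itself does for exactly this issue inside the proof of Lemma~\ref{lem2.7}, is to apply Theorem~\ref{Thm2.3} pointwise on \emph{all} of the low part of $I$ (your $I_{2}\cup I_{3}$): both $u$ and $v$ vanish continuously on $\partial\widetilde\Omega$ near such a point $Y=(y,t)$ (that portion of the boundary is far from $B(Q_0,2s)$ once $M$ is large), so $u/v$ extends H\"older continuously up to $\partial\widetilde\Omega$ there and its value at $Y$ differs from its value at $(y,Ms/\kappa\sqrt{n+1})$ by $O(1/\kappa^{\gamma})$ relative error. This gives $u\le(1+C\varepsilon'+C\kappa^{-\gamma})(r/s)^{n}v$ pointwise on all of $I$, after which the maximum principle in $D$ (equivalently, your integral representation) finishes the proof with no separate estimate on $I_{3}$. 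Note also that Lemma~\ref{lem2.5} only yields a two-sided multiplicative comparison and will not produce the $1\pm\varepsilon$ precision you need; Theorem~\ref{Thm2.3} is the right tool for the intermediate zone.
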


We now show that as long as $X\in\widetilde{\Omega}(Q_{0},Ms)$ is far
away from $Q_{0}$, $\tilde{\omega}^{X}(E)/ \tilde{\omega}^{X}(E')$
and $\omega^{X}(E)/\omega^{X}(E')$ are comparable, whenever $E,
E'\subset \partial\Omega\cap B(Q,2s)$.
\begin{lemma}\label{lem4.3}
Given $\varepsilon >0$, $\tau\in (0,1)$ for $L\in\mathcal{L}(\lambda,\Lambda,\alpha)$, there exists $M(n, \varepsilon ,\alpha)> 0$ such that for
$M \geq M(n, \varepsilon,\alpha )$ there exists $\delta=\delta(\varepsilon, M,
\tau) >0$ such that if $\Omega$ is $(\delta, R)$-Reifenberg flat domain, $0< r
< s \leq R/M$, $Q_0 \in \partial \Omega$, $Q \in \partial
\Omega$, $B(Q,r) \subset B(Q_0,S)$ and $X \in
\widetilde{\Omega}(Q_0, Ms)\setminus \mathcal{C}(Q_0, Ms/2),$ then
$$ (1+\varepsilon)^{-1}\lim_{Y \rightarrow Q}\frac{\widetilde{G}(X,Y)}{G(X,Y)}
\leq \frac{d\widetilde{\omega}^X(Q)}{d \omega^X(Q)} \leq
(1+\varepsilon)\lim_{Y \rightarrow
Q}\frac{\widetilde{G}(X,Y)}{G(X,Y)}.$$
Here $\omega$ (resp. $\widetilde{\omega}$) denotes the elliptic
measure of $\Omega$ (resp. $\widetilde{\Omega}(Q_{0},Ms)$) with pole
at $X$, and $G$ (resp. $\widetilde{G}$) denotes the Green's functions
of $\Omega$ (resp. $\widetilde{\Omega}(Q_{0},Ms)$).
\end{lemma}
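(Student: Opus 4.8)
The plan is to produce the number $\ell(Q):=\lim_{Y\to Q}\widetilde G(X,Y)/G(X,Y)$ from the boundary comparison principle and then transport it to the Radon--Nikodym derivative via the Caffarelli--Fabes--Mortola--Salsa estimate and a Lebesgue differentiation argument.

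First I would fix the geometry. For $M=M(n,\varepsilon,\alpha)$ large and $\delta$ small, $\widetilde\Omega(Q_0,Ms)=\Omega\cap\{\text{cylinder}\}$ contains a full neighborhood of $\partial\Omega\cap B(Q_0,2s)$, so $\widetilde\Omega(Q_0,Ms)$ agrees with $\Omega$ on $B(Q,2r)$ and $\partial\widetilde\Omega(Q_0,Ms)\cap B(Q,2r)=\partial\Omega\cap B(Q,2r)$ whenever $B(Q,r)\subset B(Q_0,s)$. Since $X\in\widetilde\Omega(Q_0,Ms)\setminus\mathcal C(Q_0,Ms/2)$ we have $|X-Q_0|>Ms/(2\sqrt{n+1})$, hence $|X-Q|>2r$ for $M$ large. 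Therefore both $\widetilde G(X,\cdot)$ and $G(X,\cdot)$ are nonnegative, solve $L(\cdot)=0$ in $\Omega\cap B(Q,2r)$, and vanish continuously on $\partial\Omega\cap B(Q,2r)$. Theorem~\ref{Thm2.3} then shows that $w:=\widetilde G(X,\cdot)/G(X,\cdot)$ is $\gamma$-H\"older up to the boundary on $\overline\Omega\cap\overline{B(Q,r)}$, so $\ell(Q)=\lim_{Y\to Q}w(Y)$ exists; combining Theorem~\ref{Thm2.3} with Lemma~\ref{lem2.5} gives, for $0<t<r$ and $Y\in\Omega\cap B(Q,t)$,
$$\big(1-C(t/r)^{\gamma}\big)\,\ell(Q)\ \le\ w(Y)\ \le\ \big(1+C(t/r)^{\gamma}\big)\,\ell(Q).$$

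Next I would pass to the elliptic measures. As in \cite{kt1}, for $\delta$ small $\widetilde\Omega(Q_0,Ms)$ is an NTA domain, and since it agrees with $\Omega$ near $Q$ the nontangential point $A(Q,t)$ is a corkscrew point for both domains at scale $t$. Thus Lemma~\ref{lem2.3} applied to $\Omega$ and to $\widetilde\Omega(Q_0,Ms)$ yields
$$\omega^X(\Delta(Q,t))\simeq t^{\,n-1}G(X,A(Q,t)),\qquad \widetilde\omega^X(\Delta(Q,t))\simeq t^{\,n-1}\widetilde G(X,A(Q,t)),$$
so $\widetilde\omega^X(\Delta(Q,t))\simeq w(A(Q,t))\,\omega^X(\Delta(Q,t))$ with implicit constants depending only on NTA data. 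In particular $\widetilde\omega^X$ and $\omega^X$ are mutually absolutely continuous on $\partial\Omega\cap B(Q_0,s)$; both being doubling (Lemma~\ref{lem2.4}), Lebesgue differentiation gives $\dfrac{d\widetilde\omega^X}{d\omega^X}(Q)=\lim_{t\to0}\dfrac{\widetilde\omega^X(\Delta(Q,t))}{\omega^X(\Delta(Q,t))}$ a.e., and since $w(A(Q,t))\to\ell(Q)$ we already obtain $\dfrac{d\widetilde\omega^X}{d\omega^X}(Q)\in[C^{-2}\ell(Q),\,C^{2}\ell(Q)]$.

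The remaining — and genuinely delicate — step is to replace the fixed constant $C^2$ by $1+\varepsilon$, which is exactly why $\delta$ has to be chosen small in terms of $\varepsilon$, $M$ and $\tau$. Equivalently, one must show that the Caffarelli--Fabes--Mortola--Salsa quotients $\omega^X(\Delta(Q,t))/\big(t^{n-1}G(X,A(Q,t))\big)$ and $\widetilde\omega^X(\Delta(Q,t))/\big(t^{n-1}\widetilde G(X,A(Q,t))\big)$ agree to within $1+\varepsilon$ as $t\to0$. I would obtain this from the sharp cylinder estimates already at hand: Lemma~\ref{lem2.7} (proved via the Hopf lemma and $C^{1,\alpha}$ boundary regularity) makes the corresponding quotient for a flat cylinder $\mathcal C^{\pm}(Q_0,Ms)$ converge to a dimensional constant, and Lemmata~\ref{lem4.1}--\ref{lem4.2} squeeze $\widetilde\omega$ between the elliptic measures $\omega_{\pm}$ of $\mathcal C^{\pm}$ to within $1\pm\varepsilon$; running the same comparison at scale $t$ about $Q$ (truncating $\Omega$ itself by a cylinder adapted to $Q$ and scale $t$) forces both quotients to lie within $1\pm\varepsilon$ of that same dimensional constant, so their ratio is within $(1+\varepsilon)^{2}$ of $1$. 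Feeding this back and renaming $\varepsilon$ completes the proof. I expect this last step — in particular keeping the choice of $\delta$ consistent as $t\to0$ and transferring the sharp cylinder asymptotics of Lemma~\ref{lem2.7} to the Reifenberg flat domain — to be the only real obstacle; the rest is assembly of Theorems~\ref{Thm2.3}, \ref{Thm4.2} and Lemmata~\ref{lem2.3}, \ref{lem2.4}, \ref{lem2.5}, \ref{lem2.7}, \ref{lem4.1}, \ref{lem4.2}.
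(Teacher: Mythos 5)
Your first two paragraphs are a reasonable setup and they do reproduce, at a coarse level, what the paper also needs: the Hölder continuity of $w=\widetilde G(X,\cdot)/G(X,\cdot)$ from Theorem~\ref{Thm2.3}, and the CFMS estimate (Lemma~\ref{lem2.3}) which gives $\frac{d\widetilde\omega^X}{d\omega^X}(Q)\in[C^{-2}\ell(Q),\,C^2\ell(Q)]$. You also correctly identify the crux: upgrading $C^2$ to $1+\varepsilon$.

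The proposed mechanism for that upgrade, however, does not work. You want Lemma~\ref{lem2.7} to make the CFMS quotient $\omega^X(\Delta(Q,t))/\bigl(t^{n-1}G(X,A(Q,t))\bigr)$ converge to a dimensional constant as the geometry flattens, but Lemma~\ref{lem2.7} says nothing of the kind: it is an asymptotic optimal-doubling statement for \emph{ratios of elliptic measures of two surface balls}, not a statement about the Green function versus the measure. Likewise Lemmata~\ref{lem4.1}--\ref{lem4.2} squeeze $\widetilde\omega$ between the cylinder measures $\omega_\pm$, but again compare measures of balls — they give no control on the CFMS constant. In fact the CFMS comparability constant has no reason to tend to $1$ (or to any universal value) as $\delta\to0$: it encodes the local behavior of the Green function at the corkscrew point relative to the measure, and even in the half-space it is not $1$. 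There is also a circularity in ``running the same comparison at scale $t$'': any such comparison would need $\delta$ small relative to $t$, whereas $\delta$ must be fixed before you take $t\to0$ in the Lebesgue differentiation.

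The paper sidesteps CFMS entirely and does something genuinely different: it fixes a bump $\Psi_r$ adapted to $B(Q,r)$, writes $u_r(X)=\int_{\partial\Omega}\Psi_r\,d\omega^X=-\int_\Omega\langle A\nabla G,\nabla\Psi_r\rangle$ (and similarly for $\widetilde u_r,\widetilde G$), and proves $\frac{d\widetilde\omega^X}{d\omega^X}(Q)=\lim_{r\to0}\widetilde u_r(X)/u_r(X)$. Freezing the coefficient at $A_Q$ and integrating by parts, the error terms are controlled by $r^\alpha$ (this is exactly where the $C^{0,\alpha}$ regularity of $A$ enters, via a boundary Caccioppoli estimate) and by $(\delta Ms/r)^\eta$ (coming from the thin strip between $\widetilde\Omega$ and $\mathcal C^+$). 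The cross term $\int\widetilde G\,\mathrm{div}(A_Q\nabla\Psi_r)$ is then rewritten as $\ell(Q)\int G\,\mathrm{div}(A_Q\nabla\Psi_r)$ plus a remainder controlled by the Hölder oscillation of $\widetilde G/G$ from Theorem~\ref{Thm2.3} at scale $r/(Ms)$. Choosing $r\gtrsim\tau s$, $M$ large and then $\delta$ small makes all error factors $\lesssim\varepsilon'$, and the claim follows. That computation — which produces the $1+\varepsilon$ directly from $r^\alpha$, $M^{-\gamma}$, and $(\delta M/\tau)^\eta$ being small — is the missing idea in your proposal; the CFMS-sharpening route you sketch is not available from the lemmata you cite.
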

\begin{proof}
The Lebesgue differentiation theorem for Radon measures, ensures
that, for $\omega$-almost every $Q\in \partial\Omega\cap B(Q_0,2s)$
\begin{equation}\label{4.28}
\frac{d\widetilde{\omega}^{X}}{d{\omega^{X}}}(Q)= \lim_{r\rightarrow
0}\frac{\widetilde{\omega}^{X}(B(Q,r))}{\omega^{X}(B(Q,r))}.
\end{equation}
Consider a smooth function $\Psi_r$ such that $\Psi_r=1$ on $B(Q,r)$, ${\rm{spt}}(\Psi_r)\subset B(Q,2r)$, $|\nabla \Psi_r| \leq
\frac{C}{r}$ and $|D^2 \Psi_r| \leq \frac{C}{r^2}$. Let
$u_{r}$ satisfy $L u_{r}=0$ in $\Omega$ and $u_{r}=\Psi_{r}$ on
$\partial \Omega$. Let $\tilde{u}_{r}$ satisfy $L \widetilde{u}_{r}=0$
in $\widetilde{\Omega}(Ms,Q_{0})$ and $\widetilde{u}_{r}=\Psi_{r}$ on
$\partial\widetilde{\Omega}(Q_{0},Ms)$. Then
$$ u_r(X)= \int_{\partial \Omega}\Psi_r (Q)d\omega^X(Q)= -
\int_{\Omega} \langle A(Y)\nabla G(X,Y), \nabla \Psi_r\rangle dY$$
$$\widetilde{u}_r(X)= \int_{\partial \widetilde{\Omega}(Q_0, Ms)}\Psi_r (Q)d\widetilde{\omega}^X(Q)= -
\int_{\widetilde{\Omega}(Q_0, Ms)} \langle A(Y)\nabla
\widetilde{G}(X,Y), \nabla \Psi_r\rangle dY.$$

An argument similar to the one used to prove the Lebesgue differentiation theorem ensures that
\begin{equation}\label{tt1}
\frac{d\widetilde{\omega}^{X}}{d{\omega^{X}}}(Q)=\lim_{r\rightarrow
0}\frac{\tilde u_r(X)}{u_r(X)}.
\end{equation}

Let $A(Q)=A_Q$, then
$$\int_{\Omega} \langle A(Y)\nabla G(X,Y), \nabla \Psi_r\rangle dY =
\int_{\Omega} \langle \nabla G(X,Y), A_Q\nabla \Psi_r\rangle dY
+\int_{\Omega} \langle (A(Y)-A_Q)\nabla G(X,Y), \nabla
\Psi_r\rangle dY.$$
We estimate the last term by appealing (\ref{modulus}), H\"{o}lder's inequality, a boundary Cacciopoli estimate (see \cite{k1}, Lemma 1.21) and Lemma \ref{lem2.2},
\begin{eqnarray*}
\bigg|\int_{\Omega} \langle (A(Y)-A_Q)\nabla G(X,Y), \nabla \Psi_r\rangle dY \bigg|
&\leq & Cr^{\alpha} \bigg(\int_{B(Q,2r)}|\nabla
G|^2dY\bigg)^{1/2}\bigg(\int_{B(Q,2r)}|\nabla \Psi_r|^2dY\bigg)^{1/2} \\
&\leq &Cr^{\alpha}\frac{1}{r}r^{n/2}r^{n/2}\bigg({\mint}_{B(Q,2r)}|\nabla G|^2dY\bigg)^{1/2}\\
& \leq & Cr^{\alpha}r^{n-2}\bigg({\mint}_{B(Q,4r)}G^2dY\bigg)^{1/2}\\
& \leq & Cr^{n-2+\alpha}G(A(Q,r),X).\end{eqnarray*}

To estimate the first term on the right hand side note that
$$ \int_{\Omega} \langle \nabla G(X,Y), A_Q\nabla \Psi_r\rangle dY =
\int_{\Omega} {\rm{div}}(GA_Q\nabla \Psi_r)dY - \int_{\Omega} G
{\rm{div}}(A_Q\nabla \Psi_r)dY$$
thus
\begin{equation}\label{ur}
\bigg|u_r(X) - \int_{\Omega} G(X,Y) {\rm{div}}(A_Q\nabla \Psi_r)dY\bigg|\leq Cr^{n-2+\alpha}G(A(Q,r))+ \bigg|\int_{\Omega} {\rm{div}}(G(X,Y)A_Q\nabla \Psi_r)dY\bigg|.
\end{equation}
Note also that
$$ \bigg|\int_{\Omega}{\rm{div}}(G(X,Y)A_Q\nabla \Psi_r)dY\bigg|= \bigg|\int_{\widetilde{\Omega}(Q_0, Ms)} {\rm{div}}(G(X,Y)A_Q,\nabla \Psi_r)dY\bigg|.$$
If $\mathcal{F}=\widetilde{\Omega}(Q_0,Ms)\setminus C^{+}(Q_0,Ms)$ then
$$\int_{\widetilde{\Omega}(Q_0, Ms)}{\rm{div}}(G(X,Y)A_Q\nabla \Psi_r)dY=\int_{\mathcal{C}^{+}(Q_0,Ms)}{\rm{div}}(G(X,Y)A_Q\nabla \Psi_r)dY+
                                                  \int_{\mathcal{F}}{\rm{div}}(G(X,Y)A_Q\nabla \Psi_r)dY$$
and
\begin{eqnarray*}
\bigg|\int_{\mathcal{C}^{+}(Q_0,Ms)}{\rm{div}}(G(X,Y)A_Q\nabla \Psi_r)dY\bigg|
&=& \bigg|\int_{\{t= \frac{Ms\delta}{\sqrt{n+1}}\}} \langle G(X,Y)A_Q\nabla \Psi_r,e_n \rangle dS\bigg|\\
&\leq &C\sup_{Y\in B(Q_,2r)\cap \{t= \frac{Ms\delta}{\sqrt{n+1}}\}}\frac{1}{r}r^{n-1}G(X,Y)\\
&\leq &C\bigg(\frac{Ms\delta}{r}\bigg)^{\beta} G(A(Q,r),X)r^{n-2}.
\end{eqnarray*}
Now
\begin{eqnarray*}
\bigg|\int_{\mathcal{F}} G(X,Y) {\rm{div}}(A_Q\nabla \Psi_r)dY\bigg|
&\leq &C\sup_{\mathcal{F}\cap B(Q,2r)}G(X,Y) \cdot \sigma(\mathcal{F}\cap B(Q,2r))\frac{1}{r^2}\\
&\leq &C\bigg(\frac{Ms\delta}{r}\bigg)^{\beta}G(A(Q,r),X)\frac{1}{r^2}r^{n-1}Ms\delta\\
&= &C\bigg(\frac{Ms\delta}{r}\bigg)^{\beta+1}G(A(Q,r),X)r^{n-2}.
\end{eqnarray*}
In a similar way,
\begin{eqnarray*}
\bigg|\int_{\mathcal{F}}\langle A_Q\nabla \Psi_r, \nabla G \rangle dY\bigg|
&\leq &C\frac{1}{r}\int_{\mathcal{F}\cap B(Q,2r)}|\nabla G|dY \\
&\leq &C\frac{1}{r}\bigg(\int_{B(Q,2r)}|\nabla G|^2dY\bigg)^{1/2}\bigg(\mathcal{H}^n(\mathcal{F}\cap B(Q,2r))\bigg)^{1/2}\\
&\leq &C\frac{r^{n/2}}{r^2}G(A(Q,r),X)(Ms\delta r^{n-1})^{1/2}\\
&=& Cr^{n-2}G(A(Q,r),X)\bigg(\frac{Ms\delta}{r}\bigg)^{1/2}.
\end{eqnarray*}
Therefore
$$ \int_{\widetilde{\Omega}(Q_0,Ms)}{\rm{div}}(G(X,Y) A_Q\nabla \Psi_r)dY \leq C\bigg(\frac{Ms\delta}{r}\bigg)^{\beta}G(A(Q,r),X)r^{n-2}$$
for $\eta = \min\{\beta, \frac{1}{2}\}$. We use this estimate in (\ref{ur}),
$$\bigg|u_r(X)- \int_{\Omega}G(X,Y) {\rm{div}}(A_Q\nabla \Psi_r)dY\bigg|
\leq Cr^{n+\alpha-2}G(A(Q,r),
X)+C\bigg(\frac{Ms\delta}{r}\bigg)^{\eta}G(A(Q,r), X)r^{n-2}.$$
Note that a similar estimate holds for $\widetilde{u}_r(X)$ in terms of $\widetilde{G}$. Next we write
$$ \int_{\widetilde{\Omega}(Q_0,Ms)}\widetilde{G}(X,Y){\rm{div}}(A_Q\nabla \Psi_r)dY=
\int_{\widetilde{\Omega}(Q_0,Ms)}\frac{\widetilde{G}(X,Y)}{G(X,Y)}G(X,Y){\rm{div}}(A_Q\nabla
\Psi_r)dY $$
$$ = \int_{\widetilde{\Omega}(Q_0,Ms)}\bigg(\frac{\widetilde{G}(X,Y)}{G(X,Y)}-l(Q)\bigg)G(X,Y){\rm{div}}(A_Q\nabla
\Psi_r)dY+l(Q)\int_{\widetilde{\Omega}(Q_0,Ms)}G(X,Y) {\rm{div}}(A_Q\nabla
\Psi_r)dY$$ where $$l(Q)= \lim_{Y \rightarrow
Q}\frac{\widetilde{G}(X,Y)}{G(X,Y)}.$$

 We now choose $\tau s \leq r <s$ and
 $$ \frac{1}{M^{\gamma}} < \varepsilon'$$ where $\varepsilon'=\varepsilon'(\varepsilon)$. Then we choose $\delta$ such that
 $$ \bigg(\frac{\delta M}{\tau}\bigg)^{\eta}<
 \varepsilon'.$$ Combining the estimates above with Theorem \ref{Thm2.3} we have
 \begin{eqnarray}\label{tt2}
 \bigg|\widetilde{u}_r(X) &- &l(Q)u_r(X)\bigg| \leq \bigg|\widetilde{u}_r(X)-\int_{\Omega}\widetilde{G}(X,Y) {\rm{div}}(A_Q\nabla \Psi_r)dY\bigg|\\[3mm]
 &+&\bigg|\int_{\Omega}\widetilde{G}(X,Y) {\rm{div}}(A_Q\nabla \Psi_r)dY-l(Q)\int_{\Omega}G(X,Y) {\rm{div}}(A_Q\nabla \Psi_r)dY\bigg|\nonumber\\[3mm]
 &+ &l(Q)\bigg|u_r(X)-\int_{\Omega}G(X,Y) {\rm{div}}(A_Q\nabla \Psi_r)dY\bigg|\nonumber\\
 &\lesssim& r^{n+\alpha-2}\widetilde{G}(A(Q,r),
X)+\varepsilon'r^{n-2}\widetilde{G}(A(Q,r), X)\nonumber\\[3mm]
&+&l(Q)r^{n+\alpha-2}G(A(Q,r),
X)+\varepsilon'l(Q)r^{n-2}G(A(Q,r),
X)\nonumber\\[3mm]
&+&\bigg(\frac{r}{Ms}\bigg)^{\gamma}l(Q)\bigg(\frac{\delta Ms}{r}\bigg)^{\beta+1}r^{n-2}G(A(Q,r),
X)\nonumber\\[3mm]
&\lesssim & r^{n-2}G(A(Q,r),X)\bigg\{ (r^\alpha+\varepsilon')\frac{\widetilde{G}(A(Q,r),X)}{G(A(Q,r),X)}+ (r^{\alpha}+\varepsilon')l(Q)\bigg\}\nonumber\\[3mm]
&\lesssim & u_r(X)\bigg\{ (r^\alpha+\varepsilon')\frac{\widetilde{G}(A(Q,r),X)}{G(A(Q,r),X)}+ (r^{\alpha}+\varepsilon')l(Q)\bigg\}\nonumber
\end{eqnarray}
since by the maximum principle
$$r^{n-2}G(A(Q,r),X)\lesssim \omega^X(B(Q,r))\lesssim u_r(X).$$
Furthermore since
$$\lim_{r\rightarrow 0}\frac{\widetilde{G}(A(Q,r),X)}{G(A(Q,r),X)}= l(Q)\ \ \hbox{ then }\  \
\bigg|\frac{\widetilde{u}_r(X)}{u_r(X)}-l(Q)\bigg| \lesssim \varepsilon l(Q).$$
We conclude the proof by combining (\ref{tt1}), (\ref{tt2}) and choosing $\varepsilon'$ in terms of $\varepsilon$.
\end{proof}
\begin{cor}\label{cor4.2}
Given $\varepsilon>0$, for $L\in\mathcal{L}(\lambda,\Lambda,\alpha)$, there exists $M(n,\varepsilon,
\alpha)>1$ so that
if $M\ge M(n,\varepsilon, \alpha)$ there exists
$\delta(n,\alpha, \varepsilon,M)>0$, such that if $\Omega\subset \R^{n+1}$
is a $(\delta, R)$-Reifenberg flat domain with $\delta\in (0,
\delta(n,\varepsilon)]$, $Q_{0}\in\partial \Omega$, $0<s\leq
{R/M}$, $E, E'\subset \partial\Omega\cap B(Q_0,2s)$, and $X\in
\widetilde{\Omega}(Q_{0},Ms)\backslash\mathcal{C}(Q_0,Ms/2)$ then
$$
(1-\varepsilon)\frac{\tilde{\omega}^{X}(E)}{\tilde{\omega}^{X}(E')}\leq
\frac{\omega^{X}(E)}{\omega^{X}(E')}\leq
(1+\varepsilon)\frac{\tilde{\omega}^{X}(E)}{\tilde{\omega}^{X}(E')}.$$
\end{cor}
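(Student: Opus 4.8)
The plan is to deduce the comparison of ratios $\omega^X(E)/\omega^X(E')$ and $\tilde\omega^X(E)/\tilde\omega^X(E')$ directly from the pointwise comparison of Radon–Nikodym derivatives established in Lemma \ref{lem4.3}. First I would fix $\varepsilon>0$, apply Lemma \ref{lem4.3} with $\varepsilon$ (and a fixed $\tau$, say $\tau=1/2$, to pin down the range of $r$) to obtain $M(n,\varepsilon,\alpha)>1$ and, for $M\ge M(n,\varepsilon,\alpha)$, a threshold $\delta(n,\alpha,\varepsilon,M)>0$ so that for every $(\delta,R)$-Reifenberg flat domain $\Omega$ with $\delta$ below that threshold, every $Q\in\partial\Omega\cap B(Q_0,2s)$ with the appropriate ball containment, and every $X\in\widetilde\Omega(Q_0,Ms)\setminus\mathcal C(Q_0,Ms/2)$,
$$
(1+\varepsilon)^{-1}\, l(Q)\ \le\ \frac{d\widetilde\omega^X}{d\omega^X}(Q)\ \le\ (1+\varepsilon)\, l(Q),
\qquad l(Q)=\lim_{Y\to Q}\frac{\widetilde G(X,Y)}{G(X,Y)},
$$
for $\omega^X$-a.e.\ such $Q$. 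In fact, by Theorem \ref{Thm2.3} the ratio $\widetilde G(X,\cdot)/G(X,\cdot)$ extends Hölder continuously up to $\partial\Omega\cap B(Q_0,2s)$, so $l$ is a genuine continuous positive function there, not merely defined a.e.

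Next I would observe that, since $X$ is a fixed interior pole with $\delta(X)\gtrsim Ms$, both Green's functions are comparable on a fixed neighborhood of the portion of boundary in question, so $l$ is bounded above and below by positive constants on $\partial\Omega\cap B(Q_0,2s)$; in particular the measure $\tilde\omega^X$ restricted to this set is mutually absolutely continuous with respect to $\omega^X$ with density $d\tilde\omega^X/d\omega^X=l(\cdot)\cdot\big(1+O(\varepsilon)\big)$ in the precise two-sided sense above. Then for any Borel sets $E,E'\subset\partial\Omega\cap B(Q_0,2s)$ I integrate: writing $m:=\inf_{B(Q_0,2s)\cap\partial\Omega} l$ and using the two-sided bound,
$$
\tilde\omega^X(E)=\int_E \frac{d\tilde\omega^X}{d\omega^X}\,d\omega^X
\le (1+\varepsilon)\int_E l\,d\omega^X,\qquad
\tilde\omega^X(E')\ge (1+\varepsilon)^{-1}\int_{E'} l\,d\omega^X .
$$
This does not immediately give what I want unless I also compare $\int_E l\,d\omega^X$ with $l$ times $\omega^X(E)$. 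The clean way around this is to note that the statement is local: it suffices to prove the estimate when $E,E'$ are replaced by small pieces and then sum, OR — cleaner still — to simply run the argument of Lemma \ref{lem4.3} with $E$, $E'$ in place of the balls $B(Q,r)$ and pass $\Psi_r$ to approximations of $\chi_E$, $\chi_{E'}$; the same estimates yield $\tilde u_E(X)/u_E(X)$ and $\tilde u_{E'}(X)/u_{E'}(X)$ both within a factor $(1\pm\varepsilon)$ of a common quantity, so that their ratio is controlled. The honest route, and the one I expect the authors take, is: the pointwise density bound in Lemma \ref{lem4.3} gives $d\tilde\omega^X/d\omega^X$ pinched between $(1\pm\varepsilon)$ times the \emph{continuous} function $l$; subdividing $\partial\Omega\cap B(Q_0,2s)$ into finitely many pieces on which $l$ oscillates by less than a factor $(1+\varepsilon)$ (possible by uniform continuity of $l$) and summing reduces the statement to the trivial case where $l$ is essentially constant, on each piece of which $\tilde\omega^X(A)\simeq l\,\omega^X(A)$ up to $(1+\varepsilon)^2$, hence $\tilde\omega^X(E)/\tilde\omega^X(E')$ and $\omega^X(E)/\omega^X(E')$ agree up to a factor $(1+\varepsilon)^{O(1)}$; relabeling $\varepsilon$ finishes the proof.

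The main obstacle is purely bookkeeping: converting the $\omega^X$-a.e.\ pointwise density bound of Lemma \ref{lem4.3} into a bound on ratios of measures of arbitrary Borel sets $E,E'$. This requires knowing that $l$ is not just measurable but (Hölder) continuous and bounded away from $0$ and $\infty$ on $\partial\Omega\cap B(Q_0,2s)$ — which is exactly Theorem \ref{Thm2.3} applied to the pair $u=\widetilde G(X,\cdot)$, $v=G(X,\cdot)$ (both $L$-harmonic and vanishing on the relevant part of the boundary, with $X$ fixed far away) — so that the partition-and-sum argument goes through with constants depending only on the allowed parameters. Once continuity of $l$ is in hand, no further analysis is needed, and I would simply cite Lemma \ref{lem4.3} and Theorem \ref{Thm2.3} and carry out the elementary partition argument; indeed the authors likely dispatch this with a one-line reference to the analogous Corollary in \cite{kt1}.
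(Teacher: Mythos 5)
Your plan correctly identifies Lemma \ref{lem4.3} and Theorem \ref{Thm2.3} as the two ingredients, and you correctly flag the real obstacle: the pointwise bound $d\tilde\omega^X/d\omega^X(Q)\in[(1+\varepsilon')^{-1}l(Q),(1+\varepsilon')l(Q)]$ does not by itself control the ratio $\tilde\omega^X(E)/\tilde\omega^X(E')$ in terms of $\omega^X(E)/\omega^X(E')$, because the (nonconstant) factor $l$ need not cancel. However, the resolution you propose --- partition $\partial\Omega\cap B(Q_0,2s)$ into pieces on which $l$ oscillates by at most $(1+\varepsilon)$ and ``sum'' --- does not work. After partitioning, you obtain $\tilde\omega^X(E)\approx \sum_j c_j\,\omega^X(E\cap B_j)$ and $\tilde\omega^X(E')\approx\sum_j c_j\,\omega^X(E'\cap B_j)$ with different near-constant values $c_j$ on each piece. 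If $E$ is concentrated where $c_j$ is large and $E'$ is concentrated where $c_j$ is small, the ratio of the weighted sums is not close to the ratio of the unweighted sums; there is no way to ``reduce to the trivial case'' by summing. Your alternative of ``running the argument of Lemma \ref{lem4.3} with $E,E'$ in place of $B(Q,r)$'' also cannot go through verbatim, since that argument is anchored to a test function supported in a small ball $B(Q,2r)$ and to a single interior corkscrew point $A(Q,r)$.

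What the paper actually does --- and what your argument is missing --- is a quantitative scale-separation step that makes $l$ essentially a single global constant on all of $\partial\Omega\cap B(Q_0,2s)$, not just on small patches. Theorem \ref{Thm2.3} applied to $u=\widetilde G(X,\cdot)$, $v=G(X,\cdot)$ gives H\"older continuity of $l$ at the \emph{large} scale $Ms$:
\[
|\ell(Q)-\ell(Q_0)|\ \le\ C\,\frac{\widetilde G(X,A(Q_0,Ms))}{G(X,A(Q_0,Ms))}\left(\frac{|Q-Q_0|}{Ms}\right)^{\gamma},
\]
and by Lemma \ref{lem2.5} the corkscrew ratio is comparable to $\ell(Q_0)$. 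Since $E,E'\subset B(Q_0,2s)$ live at the \emph{small} scale $2s$, the right-hand side is $\lesssim (2/M)^{\gamma}\ell(Q_0)$. Thus $\ell(Q)=(1\pm C/M^{\gamma})\ell(Q_0)$ uniformly on $\partial\Omega\cap B(Q_0,2s)$, and integrating the density bound gives $\tilde\omega^X(E)=(1\pm\varepsilon')(1\pm C/M^{\gamma})\,\ell(Q_0)\,\omega^X(E)$ (and the same for $E'$), so the $\ell(Q_0)$ cancels in the ratio. This is precisely why $M$ must be chosen large depending on $\varepsilon$: you need $C/M^{\gamma}$ small, and a qualitative uniform-continuity argument at an unknown scale cannot supply that. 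You should replace your partition-and-sum step with this two-scale H\"older argument.
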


\begin{proof} We choose $\varepsilon'$ to depend on $\varepsilon$ such that Lemma \ref{lem4.3} is satisfied. From Theorem \ref{Thm2.3} we have
$$
|\ell(Q_{0})-\ell(Q)|\leq C \frac{\widetilde{G}(X,A(
Q_{0},Ms))}{G(X,A(Q_{0},Ms))}
\bigg(\frac{|Q-Q_{0}|}{Ms}\bigg)^{\gamma}
$$
for $Q\in\partial\Omega\cap B(Q_0,2s)$. In addition Lemma \ref{lem2.5}
guarantees that there exists a constant $C>1$ so that
$$
C^{-1}\ell(Q_{0})\leq \frac{\widetilde{G}(X,A(
Q_{0},Ms))}{G(X,A(Q_{0},Ms))}\leq C\ell(Q_{0}).
$$
Hence
$$
\bigg(1-\frac{C}{M^{\gamma}}\bigg)\ell(Q_{0})\leq \ell(Q)\leq
\bigg(1+\frac{C}{M^{\gamma}}\bigg)\ell(Q_{0}).
$$
Since
$$
\widetilde{\omega}^{X}(E)=\int_{E}\frac{d\widetilde{\omega}^{X}}{d\omega^{X}}(Q)d\omega^{X}(Q)\leq
(1+\varepsilon')\int_{E}\ell(Q)d\omega^{X}(Q)\leq (1+\varepsilon')\bigg(1+\frac{C}{M^\gamma}\bigg)l(Q_0)\omega^X(E)
$$
and
$$\widetilde{\omega}^X(E)\geq (1+\varepsilon')^{-1}\bigg(1-\frac{C}{M^\gamma}\bigg)l(Q_0)\omega^X(E)$$
we have that
$$
(1+\varepsilon')^{-2}\frac{1-(C/M^{\gamma})}{1+(C/M^{\gamma})}\cdot
\frac{\omega^{X}(E)}{\omega^{X}(E')}\leq
\frac{\tilde{\omega}^{X}(E)}{\tilde{\omega}^{X}(E')}\leq(1+\varepsilon')^{2}\frac{1+(C/M^{\gamma})}{1-(C/M^{\gamma})}\cdot
\frac{\omega^{X}(E)}{\omega^{X}(E')}.
$$
Choosing $M$ and $\varepsilon'$ appropriately we conclude the proof of Corollary \ref{cor4.2}.
\end{proof}

\textbf{Proof of Theorem \ref{Thm4.1}:} For
$\varepsilon'=\varepsilon'(\varepsilon)$, let
$M(n,\varepsilon',\alpha)>1$ be as in Theorem \ref{Thm4.2} and Corollary
\ref{cor4.2}. For $M\geq M(n,\varepsilon',\alpha)$ there exists
$\delta(n,\varepsilon', \alpha, {r/s},M)>0$ so that if $\Omega$ is a
$(\delta,R)$ Reifenberg flat domain with $\delta\leq
\delta(n,\varepsilon', {r/s},M)$ then Theorem \ref{Thm4.2}, Lemma \ref{lem4.3} and
Corollary \ref{cor4.2} hold. Namely for $0<r\leq s\leq {R/M}$ we
have that
$$
(1-\varepsilon')\bigg(\frac{r}{s}\bigg)^{n}\leq
\frac{\widetilde{\omega}^{X}(B(Q_1,r))}{\widetilde{\omega}^{X}(B(Q_2,r))}\leq
(1+\varepsilon')\bigg(\frac{r}{s}\bigg)^{n},
$$
where $Q_{1}, Q_{2}\in\partial\Omega\cap B(Q_0,s)$ for some $Q_{0}\in\partial
\Omega$, $X\in\widetilde{\Omega}(Q_{0},Ms)\backslash\mathcal{C}(Q_0,Ms/2)$. Moreover
$$
(1-\varepsilon')\frac{\widetilde{\omega}^{X}(B(Q_1,r))}{\widetilde{\omega}^{X}(B(Q_2,s))}\leq
\frac{\omega^{X}(B(Q_1,r))}{\omega^{X}(B(Q_2,s))}\leq (1+\varepsilon')\frac{\widetilde{\omega}^{X}(B(Q_1,r))}{\widetilde{\omega}^{X}(B(Q_2,s))}.
$$
Therefore for $\varepsilon'>0$ so that $1-\varepsilon\leq
(1-\varepsilon')^{2}$ and $(1+\varepsilon')^{2}\leq
1+\varepsilon$, and $X\in\Omega\cap
\partial B(Q_{0},Ms/2)$
$$
(1-\varepsilon)\bigg(\frac{r}{s}\bigg)^{n}\omega^{X}(B(Q_2,s))\leq \omega^{X}(B(Q_1,r)) \leq
(1+\varepsilon)\bigg(\frac{r}{s}\bigg)^{n}\omega^{X}(B(Q_2,s)).
$$
The maximum principle guarantees that for all $X\in\Omega
\backslash B(Ms/2, Q_{0})$
$$
(1-\varepsilon)\bigg(\frac{r}{s}\bigg)^{n}\leq
\frac{\omega^{X}(B(Q_1,r))}{\omega^{X}(B(Q_2,s))}\leq (1+\varepsilon)\bigg(\frac{r}{s}\bigg)^{n}.
$$

\section{Regularity on chord arc domains.}\label{chord}

In this section we prove that on a chord arc domain with small enough constant if $L$ is either in $\mathcal L(\lambda, \Lambda, \alpha)$ or if it is a perturbation of the
Laplacian then the elliptic measure is an $A_\infty$ weight with respect to surface measure.
In the case that $\Omega $ is a chord arc domain with vanishing constant and  $L\in \mathcal L(\lambda, \Lambda, \alpha)$ we show that the logarithm of elliptic kernel (i.e. the density of the elliptic measure with respect to the surface measure)
is in VMO. A key step in these proofs is Semmes' Decomposition for
chord arc domains with small constant (see \cite{kt1}, Theorem
2.2).

Let $\Omega$ be a $(\delta,R)$-CAD for $\delta$ small enough so Theorem 2.2 in $\cite{kt1}$ holds. Let $P\in\partial \Omega$ and let $r>0$ small enough so the construction in Lemma 5.1 in $\cite{kt1}$ goes through. In this case there exist two Lipschitz functions $h^+$ and $h^-$ defined in $\mathcal{L}(P,r)$ such that $h^-\leq h^+$ and $\| \nabla h^{\pm}\| _{\infty}\leq \eta $ where $\eta \simeq \delta^{1/4}$. Let
$$\Omega^{+}=\{(x,t)\in \R^{n+1}\ :\ x\in \mathcal{L}(P,r), \ t>h^{+}(x) \}$$
and
$$\Omega^{-}=\{(x,t)\in \R^{n+1}\ :\ x\in \mathcal{L}(P,r), \ t>h^{-}(x) \}.$$
As in Lemma 5.1 in \cite{kt1} the graphs $\Gamma^{\pm}$ of $h^{\pm}$ approximate $\partial \Omega$ in $\mathcal{C}(P,r)$ from above and below respectively, in the sense that
\begin{equation}\label{heart}
D\bigg[\Gamma^{\pm}\cap B(P,r); \partial \Omega \cap B(P,r)\bigg]\leq \eta r \ \ {\rm{and}} \ \
\sigma( \Gamma^+ \cap \Gamma^-\cap B(P,r))\geq \bigg(1-c_1\exp\{-c_2/\eta\}\bigg)\omega_nr^n
\end{equation}
where $c_1, c_2$ are positive constants as in (\cite{kt1}, Theorem 2.2). Moreover
$$\Omega^+\cap \mathcal{C}(P,r)\subset \Omega\cap \mathcal{C}(P,r)\subset \Omega^-\cap \mathcal{C}(P,r).$$

\begin{lemma}\label{AinftyHolder}
Let $L\in \mathcal L(\lambda,\Lambda,\alpha)$.
There exists $\delta(n)>0$ such that if $\Omega \subset \R^{n+1}$ is a $(\delta,R)-$CAD with $0<\delta\leq\delta(n)$ and $X\in \Omega$ then $\omega^X\in A_\infty(d\sigma)$
where $\sigma=\mathcal{H}^n \res \partial \Omega$.
\end{lemma}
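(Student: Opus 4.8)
The plan is to transfer the known $A_\infty$ result for Lipschitz domains (or, in this divergence-form setting, the machinery of Dahlberg-type mutual absolute continuity) from the two Lipschitz graph domains $\Omega^{\pm}$ to $\Omega$ itself, using Semmes' decomposition as recorded in (\ref{heart}). The point is that $\Omega^+ \cap \mathcal{C}(P,r)\subset \Omega\cap \mathcal{C}(P,r)\subset \Omega^-\cap\mathcal{C}(P,r)$, so that by the maximum principle the elliptic measure $\omega^X$ of $\Omega$ is squeezed between the elliptic measures $\omega^X_{\pm}$ of the Lipschitz domains $\Omega^{\pm}$ on the common good set $\Gamma^+\cap\Gamma^-$, which by (\ref{heart}) carries almost all of the surface measure of $\partial\Omega$ in $B(P,r)$. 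First I would fix $P\in\partial\Omega$ and a small surface ball $\Delta = B(P,r)\cap\partial\Omega$, and reduce the $A_\infty$ condition to the following quantitative statement: there are $\theta\in(0,1)$ and $\epsilon\in(0,1)$, with $\epsilon\to 0$ as $\delta\to 0$, such that for every $E\subset\Delta'$ a surface ball of $\Omega$ with $\sigma(E)\le\theta\,\sigma(\Delta')$ one has $\omega^X(E)\le\epsilon\,\omega^X(\Delta')$ (the standard scale-invariant form of $A_\infty$, which here one only needs to verify at sufficiently small scales since large scales are handled by the NTA/doubling estimates of Lemmata \ref{lem2.4}, \ref{lem2.5}). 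For this I would pick $X_0 = A(P, r)$ a corkscrew point, and compare $\omega^{X_0}$ with $\omega_{\pm}^{X_0}$ via Lemma \ref{lem2.5} (comparison of nonnegative solutions vanishing on the boundary) applied to the Green functions of $\Omega$ and $\Omega^{\pm}$, exactly as in the harmonic case of \cite{kt1}; the Hölder continuity up to the boundary of the coefficients is exactly what lets the elliptic-measure arguments of the Lipschitz theory (Dahlberg, Fabes--Jerison--Kenig, or the $C^{1,\alpha}$ boundary regularity used in Lemma \ref{lem2.7}) go through unchanged for $L\in\mathcal{L}(\lambda,\Lambda,\alpha)$.

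Concretely, the key steps in order are: (i) recall that for Lipschitz domains and $L\in\mathcal{L}(\lambda,\Lambda,\alpha)$ one has $\omega^X_{\pm}\in A_\infty(d\sigma_{\pm})$ — this is classical, since $C^{0,\alpha}$ coefficients are a trivial special case of the perturbation results, or can be obtained directly from Dahlberg's method together with the $C^{1,\alpha}$ boundary estimates; (ii) use the sandwiching of domains plus the maximum principle to get, for a Borel set $E\subset\Gamma^+\cap\Gamma^-\cap B(P,r)$, the two-sided bound $c^{-1}\,\omega^{X_0}_+(E)\le\omega^{X_0}(E)\le c\,\omega^{X_0}_-(E)$ after normalizing all three measures to have comparable total mass on $B(P,r)$ (here $c$ depends only on the NTA constants and ellipticity, via Lemma \ref{lem2.5}); (iii) observe from (\ref{heart}) that $\sigma\big(\partial\Omega\cap B(P,r)\setminus(\Gamma^+\cap\Gamma^-)\big)\le c_1 e^{-c_2/\eta}\omega_n r^n$ with $\eta\simeq\delta^{1/4}$, and that on the exceptional set one controls $\omega^{X_0}$ using the doubling estimate and the total-mass normalization; (iv) combine: given $E\subset\Delta$ with $\sigma(E)$ small relative to $\sigma(\Delta)$, split $E = (E\cap(\Gamma^+\cap\Gamma^-))\cup(E\setminus(\Gamma^+\cap\Gamma^-))$, estimate the first piece by the $A_\infty$ property of $\omega^{X_0}_-$ on the Lipschitz graph domain $\Omega^-$ (its surface measure and $\sigma$ agree up to $(1\pm\epsilon)$ on the common graph piece), and estimate the second piece by the exponentially small surface bound times the doubling constant; choosing $\delta(n)$ small makes both contributions small. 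Finally (v) upgrade from the pole at $X_0 = A(P,r)$ to an arbitrary pole $X\in\Omega$ via the change-of-pole formula for elliptic measure in NTA domains (a consequence of Lemma \ref{lem2.5} and Lemma \ref{lem2.3}), which changes $\omega^X$ by a factor bounded above and below on $\Delta$, preserving the $A_\infty$ condition.

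The main obstacle I anticipate is step (ii)/(iv): making the comparison between $\omega^{X_0}$ and $\omega^{X_0}_{\pm}$ genuinely quantitative on the exceptional set. The inclusion of domains gives a clean one-sided comparison for sets in the good region $\Gamma^+\cap\Gamma^-$, but a set $E\subset\Delta$ may have most of its surface measure concentrated near $\partial\Omega\setminus(\Gamma^+\cap\Gamma^-)$, where $\partial\Omega$ departs from both Lipschitz graphs; there the direct comparison with $\omega_{\pm}$ fails and one must instead bound $\omega^{X_0}(E\setminus(\Gamma^+\cap\Gamma^-))$ by $\omega^{X_0}(B(P,r)\setminus(\Gamma^+\cap\Gamma^-))$ and show the latter is small. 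This is where one really uses that $\Omega$ is NTA with good constants (so elliptic measure is a doubling measure, Lemma \ref{lem2.4}) together with the fact that the bad set, while not small in diameter, is small in $\sigma$-measure; one covers it by boundary balls of bounded overlap and on each such ball uses that $\omega^{X_0}$ of the ball is comparable to $r^{n-1}|G(A,X_0)|$ (Lemma \ref{lem2.3}) while $\sigma$ of the ball is $\simeq r^n$ — so summing recovers a bound proportional to $\sigma$ of the bad set, hence exponentially small in $\delta^{-1/4}$. Packaging these covering and summation estimates cleanly is the technical heart of the argument; everything else follows the template of the harmonic case in \cite{kt1} with the $C^{1,\alpha}$ boundary regularity of \cite{gt} replacing the explicit Poisson kernel computations.
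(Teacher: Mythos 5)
Your proposal has the right overall architecture---Semmes decomposition, sandwiching by Lipschitz graph domains, classical Lipschitz $A_\infty$, maximum principle---but there is a genuine gap, and it stems from the choice of direction in the quantitative $A_\infty$ reformulation.

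You choose to verify the condition ``$\sigma(E)$ small relative to $\sigma(\Delta)$ implies $\omega^{X_0}(E)$ small relative to $\omega^{X_0}(\Delta)$.'' With this direction, step (iv) forces you to bound $\omega^{X_0}(E\setminus(\Gamma^+\cap\Gamma^-))$, i.e.\ to show that the part of $\partial\Omega$ off the Lipschitz graph, although exponentially small in \emph{surface} measure, also carries little \emph{elliptic} measure. That is essentially the absolute continuity statement you are trying to prove, so the argument is circular unless you have another mechanism. The covering-and-summation fix you suggest does not supply one: Lemma~\ref{lem2.3} gives $\omega^{X_0}(B_i)\simeq r_i^{\,n-1}\,G(A_i,X_0)$, while $\sigma(B_i)\simeq r_i^{\,n}$, so the ratio $\omega^{X_0}(B_i)/\sigma(B_i)\simeq G(A_i,X_0)/r_i$ has no uniform bound coming from the NTA structure alone. (For a general NTA domain harmonic measure need not even be absolutely continuous with respect to $\sigma$, so such a bound cannot follow from Lemmata~\ref{lem2.3}--\ref{lem2.5} without using some flatness quantitatively.) Summing over the cover therefore does not ``recover a bound proportional to $\sigma$ of the bad set.''

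The paper's proof avoids this entirely by running the equivalent contrapositive direction: assume $\omega^{A}(E)<\alpha'$ and show $\sigma(E)/\sigma(\Delta)<\beta<1$. It decomposes $E=E_1\cup E_2$ with $E_1=E\cap\partial\Omega^+$ and $E_2=E\setminus\partial\Omega^+$, and uses only the \emph{inner} Lipschitz domain $\Omega^+$. For $E_1$ the maximum principle gives $\omega_+^{A}(E_1)\le\omega^{A}(E_1)\le\omega^{A}(E)<\alpha'$, and the Lipschitz $A_\infty$ for $\omega_+$ converts this into $\sigma_+(E_1)/\sigma_+(\Delta_+)\lesssim{\alpha'}^{\theta}$, hence $\sigma(E_1)/\sigma(\Delta)$ small. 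For $E_2$ there is nothing to prove about elliptic measure at all: $E_2\subset\Delta\setminus\partial\Omega^+$ and Semmes' decomposition bounds $\sigma(E_2)/\sigma(\Delta)\lesssim(1+\eta)\exp(-c_2/\eta)$ directly. Choosing $\alpha'$ and $\delta$ small finishes. Notice that in this direction the ``bad set'' only needs to be small in $\sigma$, which is exactly what (\ref{heart}) gives for free; no elliptic-measure estimate on the bad set is ever needed, and only one of the two Lipschitz graph domains enters. If you reformulate your step (iv) in the paper's direction and drop $\Omega^-$, your outline becomes correct.
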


\begin{proof}
Choose $\delta(n)>0$ such that Semmes decomposition applies $\Omega$ as in Lemma 5.1 in \cite{kt1}. For $X\in
\Omega$ let $d=\dist(X,\partial \Omega)$ and $\omega^X=\omega$.
Let $0<r\le \min\{R/2, d/4\}$. For  $P\in \partial\Omega$
let $\Delta=\partial \Omega \cap B(P,r)$ and $A=A(P,r)$ be the non-tangential interior point of
$\Omega\cap\mathcal{C}(P,2r)$. We may assume that
$\Omega^+\cap \mathcal{C}(P,2r)\subset \Omega\cap \mathcal{C}(P,2r)$.
We denote by $\omega^+$ be the $L-$elliptic measure of
$\Omega^+\cap\mathcal{C}(P,2r)$.
Since for $E\subset\Delta$, $\omega^A (E)\simeq \omega(E)/\omega(\Delta)$.
 It is enough to prove that for
$\alpha'\in (0,1)$ small, there exists $\beta\in (0,1)$ so that if
$\omega^A(E)<\alpha'$ then $\sigma(E)/\sigma(\Delta)<\beta$.

Assume that $\omega^A(E)<\alpha'$. We decompose $E$ as in Lemma 5.2 of \cite{kt1}, $E=E_1\cup E_2$ where $E_1=E\cap \partial\Omega^+$ and $E_2=E\setminus \partial \Omega^+$. By the maximum principle $\omega_+^A(E_1)\leq \omega^A(E)<\alpha'$. We write
\begin{equation}\label{Holdersemmes}
\frac{\sigma(E)}{\sigma(\Delta)}=\frac{\sigma(E_1)}{\sigma(\Delta)}+\frac{\sigma(E_2)}{\sigma(\Delta)}.
\end{equation}
Since $\Omega^+$ is a Lipschitz domain, $\omega_+\in A_\infty(d\sigma_+)$
so there are positive constants $\theta$, $C_1$, $C_2$ such that
$$C_1\bigg(\frac{\sigma_+(E_1)}{\sigma_+(\Delta_+)}\bigg)^{1/\theta}\leq \frac{\omega_+(E_1)}{\omega_+(\Delta_+)}\leq C_2\bigg(\frac{\sigma_+(E_1)}{\sigma_+(\Delta_+)}\bigg)^\theta$$
where $\Delta_+=\partial \Omega\cap B(\Pi(P),h^+(\Pi(P)),r\sqrt{1+\eta^2})$ and $\sigma_+$ denotes the surface measure of $\partial \Omega^+$. Therefore the first term of (\ref{Holdersemmes}) is estimated by
$$\frac{\sigma(E_1)}{\sigma(\Delta)} \leq \frac{\sigma_+(E_1)}{\sigma_+(\Delta_+)}\cdot\frac{\sigma_+(\Delta_+)}{\sigma(\Delta)}\lesssim {\alpha'}^\theta(1+\eta^2)^{(n+1)/2}(1+\eta).$$
Finally the second term of (\ref{Holdersemmes}) is controlled using the Semmes' Decomposition estimate for
chord arc domains with small constant (see \cite{kt1}, Theorem
2.2). That is,
$$\frac{\sigma(E_2)}{\sigma(\Delta)}\leq \frac{c_1\exp(-c_2/\eta)\omega_nr^n}{\sigma(\Delta)}\lesssim (1+\eta)\exp(-c_2/\eta).$$
Gathering all the estimates and choosing $\alpha'>0$ and $\delta>0$ small enough, since $\eta\simeq \delta^{1/4}$ we conclude that $\sigma(E)/\sigma(\Delta)<\beta<1$.
\end{proof}
An immediate consequence of Lemma \ref{AinftyHolder} is the following Corollary.

\begin{cor}\label{cor5.1}
Let $L\in \mathcal L(\lambda,\Lambda,\alpha)$. There exist $\delta(n,\lambda,\Lambda)=\delta_0>0$, $\mu>0$ and $\beta>0$
such that if $\Omega\subset \R^{n+1}$ is a $(\delta, R)$-CAD with $0<\delta\leq \delta_0$,
for $X\in \Omega$, $\Delta=\partial \Omega \cap B(Q,s)$ with $Q\in\partial\Omega$,
$s\leq \min\{{{\rm{dist}(X,\partial \Omega)}/4}, {R/4}\}$,
and $E\subset \Delta$ is a measurable set then
$$
\frac{\omega^{X}(E)}{\omega^{X}(\Delta)}\leq c\bigg(\frac{\sigma(E)}{\sigma(\Delta)}\bigg)
^{2\mu}.
$$
Moreover if $k_{X}={{d\omega^{X}}/{d\sigma}}$ then
$$
\bigg(\frac{1}{\sigma(\Delta)}\int_{\Delta} k_{X}^{1+2\beta} d\sigma\bigg)^{1/{1+2\beta}}
\leq c \frac{1}{\sigma(\Delta)}\int_{\Delta} k_{X} d\sigma
$$
where $c>1$ denotes a constant that depends only on $n, \lambda, \Lambda$.
\end{cor}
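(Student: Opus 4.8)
The plan is to deduce both estimates directly from Lemma~\ref{AinftyHolder} by means of the standard self-improvement theory of $A_\infty$ weights, the only genuinely needed check being that the constants coming out of Lemma~\ref{AinftyHolder} can be taken to depend on $n,\lambda,\Lambda$ alone. I would first fix $\delta_0=\delta(n)$ (shrinking it if necessary) so that Semmes' decomposition and Lemma~\ref{AinftyHolder} apply to every $(\delta,R)$-CAD with $\delta\le\delta_0$. Re-examining the proof of Lemma~\ref{AinftyHolder}, every constant that enters is uniform in the domain: the $A_\infty(d\sigma_+)$ constants $\theta,C_1,C_2$ of $\omega_+$ on the auxiliary Lipschitz domain $\Omega^+$ (appearing in the proof of Lemma~\ref{AinftyHolder}) depend only on $n,\lambda,\Lambda$ and on the Lipschitz constant $\eta\simeq\delta^{1/4}\le\delta_0^{1/4}$; the Semmes exceptional-set bound $\sigma(E_2)/\sigma(\Delta)\lesssim\exp(-c_2/\eta)$ is uniform; $\sigma$ is doubling with constant depending only on $n$ by~(\ref{1.7A}); and $\omega^X$ is doubling on $\partial\Omega$ by Lemma~\ref{lem2.4}. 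Consequently there exist $\alpha'\in(0,1)$ and $\beta_0\in(0,1)$, depending only on $n,\lambda,\Lambda$, such that for every $X\in\Omega$, every $Q'\in\partial\Omega$ and every $s'\le\min\{R/2,\dist(X,\partial\Omega)/4\}$, writing $\Delta'=\partial\Omega\cap B(Q',s')$, one has the threshold implication
$$
\frac{\omega^X(E')}{\omega^X(\Delta')}<\alpha'\ \Longrightarrow\ \frac{\sigma(E')}{\sigma(\Delta')}<\beta_0\qquad\text{for all measurable }E'\subset\Delta'.
$$

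Now let $X,Q,s,\Delta,E$ be as in the statement. Since $s\le\min\{R/4,\dist(X,\partial\Omega)/4\}$, every surface ball $\Delta'\subset\Delta$ has radius $s'\le s\le\min\{R/2,\dist(X,\partial\Omega)/4\}$, so the displayed implication holds on all such $\Delta'$ with the same $\alpha',\beta_0$. Together with the doubling of $\sigma$ and of $\omega^X$, this single scale-invariant threshold implication self-improves, by the classical theory of $A_\infty$ weights in the space of homogeneous type $(\partial\Omega,|\cdot|,\sigma)$, to the full quantitative $A_\infty$ relation between $\omega^X$ and $\sigma$ on $\Delta$: in particular $\omega^X|_\Delta\ll\sigma|_\Delta$, so $k_X=d\omega^X/d\sigma$ is a well-defined nonnegative weight on $\Delta$, and there are $\mu>0$, $\beta>0$ and $c>1$, depending only on $n,\lambda,\Lambda$, such that the power bound $\omega^X(E)/\omega^X(\Delta)\le c\,(\sigma(E)/\sigma(\Delta))^{2\mu}$ holds and $k_X$ satisfies the reverse H\"older inequality $\big(\tfrac{1}{\sigma(\Delta)}\int_\Delta k_X^{1+2\beta}\,d\sigma\big)^{1/(1+2\beta)}\le c\,\tfrac{1}{\sigma(\Delta)}\int_\Delta k_X\,d\sigma$. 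These are precisely the two assertions of the corollary.

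The step I expect to be the real (if unglamorous) obstacle is the uniformity bookkeeping of the first paragraph: one must make sure that the $A_\infty$ constants for $\Omega^+$, the Semmes decomposition constants $c_1,c_2$, the NTA and surface-measure doubling constants, and hence the threshold pair $(\alpha',\beta_0)$, are all controlled purely in terms of $n,\lambda,\Lambda$ once $\delta\le\delta_0(n)$, so that the final $\mu,\beta,c$ are genuinely independent of $\Omega$, of $X$, of $Q$ and of the scale $s$. The real-variable input --- the self-improvement of a single scale-invariant threshold implication into the quantitative $A_\infty$/reverse-H\"older statement for doubling measures in a space of homogeneous type --- is classical and can simply be quoted.
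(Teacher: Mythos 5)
Your proposal is correct and takes essentially the same route the paper has in mind: the paper simply declares Corollary~\ref{cor5.1} an ``immediate consequence'' of Lemma~\ref{AinftyHolder}, and you supply precisely the standard $A_\infty$ self-improvement (threshold implication $\Rightarrow$ power bound and reverse H\"older, via Coifman--Fefferman in a space of homogeneous type) together with the uniformity bookkeeping the authors leave implicit. The only remark I would add is that your (and the paper's) claim that the final constants depend on $n,\lambda,\Lambda$ alone tacitly also uses the fixed H\"older data $(\alpha,c_0)$ of the coefficient matrix, since these enter the $A_\infty$ constants of $\omega_+$ on the auxiliary Lipschitz domain; this is consistent with the paper's conventions but worth flagging.
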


The next theorem states that the density satisfies a reverse
H\"{o}lder inequality with optimal constant. The proof is very
similar to the proof of Theorem 5.2 in \cite{kt1}, which we can
adopt in our case due to the $C^{0,\alpha}$ regularity of the
coefficients. Here we present only the main steps of the
proof.

\begin{thm}\label{Thm5.1} Let $L\in \mathcal L(\lambda,\Lambda,\alpha)$.
Given $\varepsilon>0$, and $N>0$ there exists
$\delta_0=\delta(\varepsilon,N,\lambda,\Lambda,n)>0$ such that if $\Omega\subset
\R^{n+1}$ is a $(\delta, R)$-CAD with $\delta\in (0,\delta_0)$ there exists
$\gamma=\gamma(\varepsilon, N, \lambda,\Lambda, \alpha)>0$, so that for any surface ball
$B\subset\partial\Omega$ with radius $s \leq\gamma/2$, if
$X\in\Omega$ with $\dist(X,\partial\Omega)\geq N$, and
$k_{X}(Q)=k(Q)= \frac{d\omega^{X}}{d\sigma}(Q)$ then
$$
\bigg(\frac{1}{\sigma(B)}\int_{B}k^{1+\beta}d\sigma\bigg)^{1/{1+\beta}}\leq
(1+\varepsilon)\frac{1}{\sigma(B)}\int_{B}k d\sigma,
$$
where $\beta>0$.
\end{thm}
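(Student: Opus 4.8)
The plan is to combine the asymptotically optimal doubling of the elliptic measure (Theorem~\ref{Thm4.1} and Corollary~\ref{cor4.1}, adapted to the chord arc setting via Corollary~\ref{cor4.2}) with the quantitative reverse H\"older inequality of Corollary~\ref{cor5.1}, following the scheme of Theorem~5.2 in \cite{kt1}. The point of Corollary~\ref{cor5.1} is that it already gives a reverse H\"older inequality, but with a constant $c>1$ that is not close to~$1$; the content of Theorem~\ref{Thm5.1} is upgrading $c$ to $1+\varepsilon$ at the cost of shrinking both the exponent $\beta$ and the scale $s$. First I would fix $\varepsilon$ and $N$, and choose $\delta_0$ small enough that Semmes' decomposition applies (Lemma~\ref{AinftyHolder}) and that, by Corollary~\ref{cor4.1} adapted to $(\delta,R)$-CAD, the elliptic measure is $(1+\varepsilon_1)$-optimally doubling down to some scale $\gamma$ depending on $\varepsilon_1$: that is, $(1-\varepsilon_1)\tau^n\le \omega^X(\Delta(Q,\tau\rho))/\omega^X(\Delta(Q,\rho))\le (1+\varepsilon_1)\tau^n$ for all $\rho<\gamma$, $Q\in\partial\Omega$, $\tau\in(0,1)$, and all $X$ with $\dist(X,\partial\Omega)\ge N$.

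The core of the argument is a real-variable lemma of Korey type: a doubling measure that is $(1+\varepsilon_1)$-optimally doubling on a ball $B$ satisfies a reverse H\"older inequality on $B$ with constant $1+\varepsilon$ and some exponent $1+\beta$, provided $\varepsilon_1$ is small (depending on $\varepsilon$), and provided one already knows \emph{some} reverse H\"older / $A_\infty$ bound to get started (this is exactly what Corollary~\ref{cor5.1} supplies). Concretely: write $k=d\omega^X/d\sigma$ on $B=B(P,s)$. Using (\ref{1.9})-type surface regularity of the CAD together with the asymptotic doubling, one shows that for a Calder\'on--Zygmund decomposition of $B$ at a height $\lambda$ slightly above the average $\Xint-_B k\,d\sigma$, the measure $\omega^X$ of the "bad" set is controlled by $(1+\varepsilon_1)^{\text{const}}$ times its $\sigma$-measure times $\lambda$, so that the distribution function of $k$ decays essentially like that of a constant; integrating $k^{1+\beta}$ against this distribution estimate and letting $\beta\to 0$ produces the factor $(1+\varepsilon)$. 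The surface measure normalization $\sigma(\Delta(Q,r))\le(1+\delta)\omega_n r^n$ and its near-lower bound are what let one pass between $\omega^X$-averages and $\sigma$-averages with multiplicative error close to~$1$.

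I would organize the steps as: (i) reduce to $X=A(P,s)$ the corkscrew point, using Lemma~\ref{lem2.5} / Lemma~\ref{lem2.3} to change the pole with a bounded (indeed, via Theorem~\ref{Thm2.3}, controllable) error — here the hypothesis $\dist(X,\partial\Omega)\ge N$ is used so the change-of-pole constant is harmless after shrinking $\gamma$; (ii) invoke Corollary~\ref{cor4.1} in the CAD setting to get the $(1+\varepsilon_1)$-optimal doubling on $B(P,s)$ for $s\le\gamma/2$; (iii) feed this, together with the a priori reverse H\"older bound of Corollary~\ref{cor5.1} and the CAD surface bounds (\ref{1.7})--(\ref{1.9}), into the Korey-type self-improvement to conclude the reverse H\"older inequality with constant $1+\varepsilon$ and exponent $1+\beta$ for a suitable $\beta=\beta(\varepsilon,N,\lambda,\Lambda,\alpha)>0$. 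The main obstacle I anticipate is step (iii): making the dependence of $\beta$ on $\varepsilon_1$ explicit and verifying that the error terms accumulated in the Calder\'on--Zygmund iteration (from doubling, from the surface-measure comparison, and from the change of pole) all stay within a single $(1+\varepsilon)$ after choosing $\varepsilon_1$, $\delta_0$ and $\gamma$ small enough — this is precisely the delicate bookkeeping carried out in Theorem~5.2 of \cite{kt1}, and the only new input needed is that the $C^{0,\alpha}$ regularity of $A$ does not degrade any of these estimates, which is guaranteed by Lemma~\ref{lem2.7} and the boundary $C^{1,\alpha}$ theory used there.
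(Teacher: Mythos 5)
Your proposal takes a genuinely different route from the paper, and the route you suggest has a gap precisely where you locate the difficulty. The paper's proof of Theorem~\ref{Thm5.1} does not run a Calder\'on--Zygmund / Korey self-improvement off the asymptotic optimal doubling; it uses Semmes' decomposition to sandwich $\partial\Omega$ between Lipschitz graphs $\Gamma^{\pm}$ with constant $\eta\simeq\delta^{1/4}$, splits $\int_{\Delta}k^{1+\beta}d\sigma$ into the exponentially small ``bad'' set $\Delta\setminus\partial\Omega^{-}$ (handled by Corollary~\ref{cor5.1} and the Semmes estimate $\sigma(\Delta\setminus\partial\Omega^-)\lesssim e^{-c_2/\eta}(Ms)^n$), and on the ``good'' set $\Delta\cap\partial\Omega^{-}$ establishes the pointwise kernel comparison $k(Q)\le(1+\varepsilon')^{6}\,k_{-}(Q)\,\omega(\Delta)/\omega_{-}(\Delta_{-})$ via the maximum principle, Lemmas~\ref{lem4.1}--\ref{lem4.2}, Corollary~\ref{cor4.2} and a Lebesgue density argument. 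The sharp reverse H\"older then comes from transferring to the Lipschitz graph domain $\Omega^{-}$, where the $(1+\varepsilon')$-reverse H\"older for the $L$-kernel is available through Lemma~\ref{lem2.7} and the $C^{1,\alpha}$ boundary theory. In other words, the key idea is the transfer of the kernel estimate from $\Omega$ to $\Omega^{-}$, not a real-variable iteration.

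The gap in your plan is step (iii). You are asking for a ``Korey-type'' lemma that turns $(1+\varepsilon_1)$-optimal doubling of $\omega$ plus an a priori $A_\infty$ bound into a $(1+\varepsilon)$-reverse H\"older for $k=d\omega/d\sigma$ on $\partial\Omega$. This is not a standard lemma one can just cite: optimal doubling of $\omega$ is measured against $\omega$ itself, not against $\sigma$, so one must also exploit the near-optimal Ahlfors regularity of $\sigma$ and then show the two measures cannot ``rotate'' against each other --- precisely the content that the pointwise comparison $k\lesssim(1+\varepsilon')^6 k_-\,\omega(\Delta)/\omega_-(\Delta_-)$ supplies in the paper. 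Moreover the constant $\delta$ in Theorem~\ref{Thm4.1} depends on the ratio $r/s$, so the $(1+\varepsilon_1)$-optimal doubling is not uniform across scales, which is exactly what a CZ stopping-time iteration would require. Finally, the attribution is wrong: Theorem~5.2 of \cite{kt1} does not carry out the bookkeeping you describe; it is the Semmes-decomposition argument sketched above, which is what this paper reproduces with the $C^{0,\alpha}$ input entering through Lemma~\ref{lem2.7}. To repair the proposal you would need either to supply and prove the real-variable self-improvement lemma in the chord-arc-boundary setting, or (more in line with the paper) to derive the pointwise kernel comparison to $\Omega^{\pm}$ and then invoke the cylinder estimate of Lemma~\ref{lem2.7}.
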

\begin{proof}
We intend to
apply Semmes decomposition in the set $\Delta (Q_0,r)=B(Q_0,r)\cap
\partial\Omega$ with $r=Ms$ for $M>>1$ large enough.

Let $X\in
\partial\mathcal{C}(Q_{0},r/2)\cap\Omega^{+} \subset\widetilde{
\Omega}(Q_{0},Ms)$ and $\widetilde{\Omega}(Q_{0},Ms)=\Omega\cap
\mathcal{C}(Q_{0},Ms)$. We denote by $\omega$ the elliptic
measure of $\Omega$ with pole at $X$, by $\tilde{\omega}$ the
elliptic measure of $\widetilde{\Omega}(Q_{0},Ms)$ with pole at
$X$, by $\tilde{\omega}_{-}$ the elliptic measure of
$\Omega^{-}\cap \mathcal{C}(Q_{0},Ms)$ with pole $X$ and by
$\omega_{-}$ the elliptic measure of $\Omega^{-}$ with pole
$X$. Moreover we denote by $k_{-}(Q)=d\omega_{-}/d \sigma_{-}$,
$\tilde{k}_{-}(Q)=d\tilde{\omega}_{-}/d \sigma_{-}$.

We need to estimate
\begin{equation}\label{5.1*}
\int_{\Delta}k^{1+\beta}d\sigma = \int_{\Delta\backslash
\partial\Omega^{-}} k^{1+\beta}d\sigma + \int_{\Delta\cap \partial\Omega^{-}}k^{1+\beta}d\sigma.
\end{equation}
To estimate the first term of (\ref{5.1*}), we apply Semmes decomposition to get
$$
\sigma(\Delta\backslash\partial\Omega^{-})\leq
C_{1}\exp(-\frac{C_{2}}{\eta})\omega_{n}(Ms)^{n}
$$
or
$$
\frac{\sigma(\Delta\backslash
\partial\Omega^{-})}{\sigma(\Delta)}\leq 2C_{1}
\exp(-\frac{C_{2}}{\eta})M^{n}.
$$
where $\eta\simeq \delta^{1/4}$.
Applying Corollary \ref{cor5.1} and choosing
$\delta>0$ small enough we conclude that
\begin{eqnarray*}
\frac{1}{\sigma(\Delta)}\int_{\Delta\backslash\partial\Omega^{-}}
k^{1+\beta}d\sigma & \leq &
\frac{1}{\sigma(\Delta)}\bigg(\int_{\Delta\backslash\partial\Omega^{-}}k^{1+2\beta}
d\sigma\bigg)^{\frac{1+\beta}{1+2\beta}}\sigma(\Delta\backslash\partial\Omega^{-})
^{\frac{\beta}{1+2\beta}} \\
& \leq &
K\bigg(\frac{\sigma(\Delta\backslash\partial\Omega^{-})}{\sigma(\Delta)}\bigg)
^{\frac{\beta}{1+2\beta}}\bigg(\frac{1}{\sigma(\Delta)}\int_{\Delta}k^{1+\beta}
d\sigma\bigg)^{1+\beta}\\
&\leq &
K\bigg(2C_{1}\exp(-\frac{C_{2}}{\delta})M^{n}\bigg)^{\frac{\beta}{1+2\beta}}
\bigg(\frac{1}{\sigma(\Delta)}\int_{\Delta}k^{1+\beta}d\sigma\bigg)^{1+\beta}
\\
& \leq & \varepsilon'
\bigg(\frac{1}{\sigma(\Delta)}\int_{\Delta}k^{1+\beta}
d\sigma\bigg)^{1+\beta}.
\end{eqnarray*}
In order to estimate the second term of (\ref{5.1*}), we need to show that
\begin{equation}\label{5.1main}
k_(Q)\leq (1+\varepsilon')^6k_-(Q)\frac{\omega^X(\Delta)}{\omega_-^X(\Delta_-)}
\end{equation}
for every $X\in\partial \mathcal{C}(Ms/2,Q_0)\cap\Omega$, where $k$ and $k_-$ denote the elliptic kernel with pole
$X$, $\Delta_{-}=B((\Pi(Q_{0}), h^{-}(\Pi(Q_{0}))),s)\cap \partial\Omega^{-}$ and $Q\in\Delta\cap \Delta_-$.

The proof of (\ref{5.1main}) follows the same guidelines as the corresponding proof  in \cite{kt1}.
We include the proof in the case that the pole is far from the boundary in order to illustrate which results need
to be used in this case.  We refer the reader to \cite{kt1} for the proof of the case when the pole is close to the boundary.

Let $X=(x,t)$ with
$t\geq {{Ms}/{{\kappa}\sqrt{n+1}}}$. Let $G_{0}\subset\Delta\cap\partial\Omega^{-}$
be the set of density points of
$\Delta\cap \partial\Omega^{-}$.

By Lebesgue density theorem

$$
\int_{\Delta\cap \partial\Omega^{-}}k^{1+\beta}d\sigma=
\int_{G_{0}}k^{1+\beta}d\sigma
$$
and applying Corollary \ref{cor5.1} for $Q\in G_{0}$, we have

$$
\lim_{\Delta_{0}\downarrow Q}\frac{\omega(\Delta_{0}\cap\partial\Omega^{-})}{\omega(\Delta_{0})}=1,
$$
and
$$
k(Q)=\lim_{\Delta_{0}\downarrow  Q}\frac{\omega(\Delta_{0})}{\sigma(\Delta_{0})}=
\lim_{\Delta_{0}\downarrow Q}\frac{\omega(\Delta_{0}\cap\partial\Omega^{-})}{\sigma(\Delta_{0})}
$$
where $\Delta_{0}$ is a surface ball centered at $Q$ and contained in $\Delta$. Let $F=\Delta_{0}\cap\partial\Omega^{-}$ and apply the maximum principle to obtain
$$
\frac{\widetilde{\omega}(F)}{\widetilde{\omega}(\Delta)}
\leq \frac{\widetilde{\omega}_{-}(F)}{\widetilde{\omega}_{-}(\Delta_{-})}\cdot
\frac{\widetilde{\omega}_{-}(\Delta_{-})}{\widetilde{\omega}(\Delta)}
$$
where $$
1-\varepsilon'\leq\frac{\widetilde{\omega}_{-}(\Delta_{-})}{\widetilde{\omega}(\Delta)}\leq 1+\varepsilon'
$$
since Lemmata \ref{lem4.1} and \ref{lem4.2} are valid. Now using Corollary \ref{cor4.2} we obtain

$$
\frac{\omega(F)}{\omega(\Delta)}\leq
(1+\varepsilon')^{3} \frac{\omega_{-}(F)}{\omega_{-}(\Delta_{-})}
$$
and
$$
\frac{\omega(F)}{\sigma(\Delta_{0})}\leq
(1+\varepsilon')^{3}\frac{\omega_{-}(F)}{\sigma_{-}(\Delta_{0}^{-})}
\frac{\sigma_{-}(\Delta_{0}^{-})}{\sigma(\Delta_{0})}\frac{\omega(\Delta)}
{\omega_{-}(\Delta_{-})}
$$
where $\Delta_{0}^{-}$ is a surface ball in $\partial\Omega^{-}$ centered at $Q$ and with the same radius as $\Delta_{0}$.
Using the fact that $\partial\Omega^{-}$ is a Lipschitz graph with
small constant less
we conclude that for $\delta>0$ small enough,
$$
\frac{\omega(F)}{\sigma(\Delta_{0})}\leq
(1+\varepsilon')^{5}\frac{\omega_{-}(F)}{\sigma_{-}(\Delta_{0}^{-})}
\frac{\omega(\Delta)}{\omega_{-}(\Delta_{-})}
\leq (1+\varepsilon')^{5}\frac{\omega_{-}(\Delta_{0}^{-})}{\sigma_{-}(\Delta_{0}^{-})}
\frac{\omega(\Delta)}{\omega_{-}(\Delta_{-})}.
$$
Therefore letting $\Delta_{0}\downarrow Q$ we conclude that
$$
k(Q)\leq (1+\varepsilon')^{5}\frac{\omega(\Delta)}{\omega_{-}(\Delta_{-})} k_{-}(Q).
$$

The proof of the case when the pole is close to the boundary uses Theorem \ref{Thm2.3} and the ideas of the proof of Theorem 4.2 in \cite{kt1}.

Next we estimate the second term in (\ref{5.1*}). For $X\in\partial\mathcal{C}(Q_0,Ms/2)\cap \Omega$,
\begin{eqnarray*}
\frac{1}{\sigma(\Delta)}\int_{G_0}k^{1+\beta}d\sigma &\leq& \bigg[(1+\varepsilon')^{6}
\frac{\omega(\Delta)}{\omega_{-}(\Delta_{-})}\bigg]^{1+\beta}
\frac{1}{\sigma(\Delta)}\int_{G_0}k_{-}^{1+\beta}d\sigma\\
&\leq & (1+\varepsilon') \bigg[(1+\varepsilon')^{6}
\frac{\omega(\Delta)}{\omega_{-}(\Delta_{-})}\bigg]^{1+\beta}
\frac{1}{\sigma_{-}(\Delta_{-})}\int_{\Delta_{-}}k_{-}^{1+\beta}d\sigma_{-}\\
&\leq &(1+\varepsilon') \bigg[(1+\varepsilon')^{7}
\frac{\omega(\Delta)}{\omega_{-}(\Delta_{-})}\bigg]^{1+\beta}
\bigg(\frac{1}{\sigma_{-}(\Delta_{-})}\int_{\Delta_{-}}k_{-}d\sigma_{-}\bigg)^{1+\beta}\\
&\leq &(1+\varepsilon')(1+\varepsilon')^{8(1+\beta)}
\bigg(\frac{1}{\sigma(\Delta)}\int_{\Delta}k d\sigma\bigg)^{1+\beta}.
\end{eqnarray*}

Combining all the estimates above and choosing $\varepsilon'$ in term of $\varepsilon$

we have that
$$
\bigg(\frac{1}{\sigma(B)}\int_{B}k^{1+\beta}d\sigma\bigg)^{1/{1+\beta}}\leq
(1+\varepsilon)\frac{1}{\sigma(B)}\int_{B}k d\sigma,
$$
\end{proof}

The regularity result is a consequence of the following corollary.

\begin{cor}\label{cor5.2}
Let $L\in \mathcal L(\lambda,\Lambda,\alpha)$.
Given $\varepsilon>0$, and $N>0$ there exists
$\delta_0=\delta(\varepsilon,N,\lambda,\Lambda,n)>0$ such that if $\Omega\subset
\R^{n+1}$ is a $(\delta, R)$-CAD with $\delta\in(0,\delta_0)$ there exists
$\gamma=\gamma(\varepsilon,N,R,\lambda,\Lambda,w)>0$, so that for any surface ball
$B\subset\partial\Omega$ with radius $s \leq\gamma/2$, if
$X\in\Omega$ with $\dist(X,\partial\Omega)\geq N$, and
$k_{X}(Q)=k(Q)= \frac{d\omega^{X}}{d\sigma}(Q)$, then
$$
\frac{1}{\sigma(B)}\int_{B}|\log k-\bigg(\frac{1}{\sigma(B)}
\int_{B}\log k d\sigma\bigg)|d\sigma\leq \varepsilon.
$$
\end{cor}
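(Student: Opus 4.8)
The plan is to derive the small mean oscillation of $\log k$ from the reverse Hölder inequality with constant close to $1$ proved in Theorem \ref{Thm5.1}, combined with the $A_\infty$ property of Lemma \ref{AinftyHolder}; this is a local version of the classical equivalence, in the spirit of \cite{ko}, between $A_\infty$ weights with nearly optimal reverse Hölder constant and densities whose logarithm has small $BMO$ norm. First I would fix an auxiliary parameter $\varepsilon_1=\varepsilon_1(\varepsilon)>0$, to be chosen at the end, and apply Theorem \ref{Thm5.1} to the single surface ball $B$ with $\varepsilon_1$ in place of $\varepsilon$; the exponent $\beta>0$ there may be kept fixed, independent of $\varepsilon_1$, as in Corollary \ref{cor5.1}. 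Writing $\bar k=\frac{1}{\sigma(B)}\int_B k\,d\sigma$ and $w=k/\bar k$, we then have $\frac{1}{\sigma(B)}\int_B w\,d\sigma=1$ and $\frac{1}{\sigma(B)}\int_B w^{1+\beta}\,d\sigma\le(1+\varepsilon_1)^{1+\beta}$. Since, with $c_0=\frac{1}{\sigma(B)}\int_B\log k\,d\sigma$, one has $\frac{1}{\sigma(B)}\int_B|\log k-c_0|\,d\sigma\le\frac{2}{\sigma(B)}\int_B|\log w|\,d\sigma$ (because $\log w=\log k-\log\bar k$ and the mean oscillation about $c_0$ is at most twice the one about $\log\bar k$), it suffices to make the average of $|\log w|$ over $B$ small.

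From the elementary inequality $t\log t\le\beta^{-1}(t^{1+\beta}-t)$ and Jensen's inequality one gets $0\le\frac{1}{\sigma(B)}\int_B w\log w\,d\sigma\le\beta^{-1}\big((1+\varepsilon_1)^{1+\beta}-1\big)=:\delta_1$, where $\delta_1\to0$ as $\varepsilon_1\to0$; a Pinsker-type inequality (or an elementary convexity estimate via $t\mapsto t^{1+\beta}-1-(1+\beta)(t-1)$) then yields $\frac{1}{\sigma(B)}\int_B|w-1|\,d\sigma\le\sqrt{2\delta_1}$. On the set $\{w\ge 1/2\}$ we have $|\log w|\le 2|w-1|$ (using $\log t\le t-1$ for $t\ge 1$ and $-\log t\le t^{-1}-1\le 2(1-t)$ for $1/2\le t<1$), so the contribution of $\{w\ge 1/2\}$ to the average of $|\log w|$ is at most $2\sqrt{2\delta_1}$, which tends to $0$ with $\varepsilon_1$.

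The only genuine difficulty is the set $\{w<1/2\}$, where $\log(1/w)$ may be unbounded and where the control of $w^{1+\beta}$ gives no information. Here I would invoke Lemma \ref{AinftyHolder} together with the symmetry of the $A_\infty$ condition to obtain fixed constants $C_0>1$, $\mu_0>0$ (depending only on $n,\lambda,\Lambda$ and the character of $\Omega$) such that $\sigma(F)/\sigma(B)\le C_0\,(\omega^X(F)/\omega^X(B))^{\mu_0}$ for every measurable $F\subset B$. Since $\omega^X(\{w<e^{-s}\})\le e^{-s}\bar k\,\sigma(B)=e^{-s}\omega^X(B)$, this gives $\sigma(\{w<e^{-s}\})/\sigma(B)\le C_0 e^{-\mu_0 s}$, while Chebyshev's inequality gives $\sigma(\{w<1/2\})/\sigma(B)\le 2\sqrt{2\delta_1}$. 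Expressing $\frac{1}{\sigma(B)}\int_{\{w<1/2\}}\log(1/w)\,d\sigma$ by the layer cake formula as $(\log 2)\frac{\sigma(\{w<1/2\})}{\sigma(B)}+\int_{\log 2}^{\infty}\frac{\sigma(\{w<e^{-s}\})}{\sigma(B)}\,ds$, one splits the remaining integral at $S\sim\mu_0^{-1}\log(1/\delta_1)$, bounding the short range by $(S-\log 2)\,\sigma(\{w<1/2\})/\sigma(B)$ and the tail by $C_0\mu_0^{-1}e^{-\mu_0 S}$; this produces a bound of the form $C(\mu_0)\big(\sqrt{\delta_1}\log(1/\delta_1)+\sqrt{\delta_1}\big)$, again tending to $0$ as $\varepsilon_1\to0$. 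Adding the two contributions and choosing $\varepsilon_1$ small in terms of $\varepsilon$ and the fixed quantities $n,\lambda,\Lambda,\beta,C_0,\mu_0$ completes the proof. The main obstacle is precisely this lower tail: the reverse Hölder inequality is one-sided, so the smallness over $\{k<\bar k/2\}$ has to be manufactured by trading the fixed-rate exponential decay of the lower distribution function of $k$ (coming from $A_\infty$, whose constants are \emph{not} close to $1$) against the $o(\sigma(B))$ bound on $\sigma(\{k<\bar k/2\})$; one must also be careful that $\beta$ can be kept bounded away from $0$ as $\varepsilon_1\to0$, so that the $\beta$-dependent constants above do not degrade.
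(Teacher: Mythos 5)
Your proposal is correct, but it takes a genuinely different route from the paper. The paper reduces the claim to the small--$A_2$-type condition
\[
\Bigl(\mint_{B}k^{\beta}\,d\nu\Bigr)\Bigl(\mint_{B}k^{-\beta}\,d\nu\Bigr)\le 1+3\varepsilon'
\]
(obtained from the reverse H\"older inequality of Theorem~\ref{Thm5.1} plus the H\"older interpolation $\int_{B}k^{1-\beta}\le(\int_{B}k^{1+\beta})^{(1-\beta)/(1+\beta)}\sigma(B)^{2\beta/(1+\beta)}$), and then invokes Sarason's lemma together with the John--Nirenberg inequality to bound the $d\omega$-oscillation of $\log k$, finally transferring from $d\omega$ to $d\sigma$ with the $A_\infty$ estimate. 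You instead normalize $w=k/\bar k$, derive $\mint_{B}w\log w\le\beta^{-1}\bigl((1+\varepsilon_1)^{1+\beta}-1\bigr)$ from the elementary inequality $\beta\log t\le t^\beta-1$, pass through Pinsker's inequality to control $\mint_{B}|w-1|$, handle $\{w\ge 1/2\}$ with $|\log w|\le 2|w-1|$, and then defeat the lower tail $\{w<1/2\}$ by combining a layer-cake decomposition with the exponential decay of $\sigma(\{w<e^{-s}\})/\sigma(B)$ provided by the symmetric $A_\infty$ condition (from Lemma~\ref{AinftyHolder} and the doubling of $\omega$ and $\sigma$). Both arguments lean on the same two ingredients --- the nearly optimal reverse H\"older bound and the (fixed-constant) $A_\infty$ relation --- but yours is more elementary and self-contained, avoiding Sarason's lemma entirely, while the paper's is shorter because Sarason's lemma and John--Nirenberg absorb exactly the book-keeping you do by hand (entropy bound, Chebyshev, layer cake). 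One point you note correctly and which is indeed essential: the exponent $\beta$ coming from Corollary~\ref{cor5.1} is independent of $\varepsilon_1$, so the constants in your entropy and Pinsker estimates do not blow up as $\varepsilon_1\to 0$; without this the argument would collapse. Your observation about the asymmetric nature of the reverse H\"older bound and the resulting need to ``manufacture'' decay on $\{k<\bar k/2\}$ out of fixed $A_\infty$ constants is precisely where the two proofs diverge in spirit.
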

\begin{proof}
We will use Sarason's lemma and John-Niremberg's in the following
manner. Let $\varepsilon'(\varepsilon)>0$ to be determined later.
For $\varepsilon'$ and $N$ let $\delta$ and $\gamma$ be as in
Theorem \ref{Thm5.1} and $d\nu=(\int_{\Delta}k d\sigma)^{-1}k
d\sigma$. From H\"older's inequality we have
$$
\int_{B}k^{1-\beta} d\sigma\leq
\bigg(\int_{B}k^{1+\beta}d\sigma\bigg)^{{1-\beta}/
{1+\beta}}\sigma(B)^{{2\beta}/{1+\beta}}.
$$
Hence for $\varepsilon'$ small enough
$$
\int_{B}k^{\beta}d\nu \int_{B}k^{-\beta}d\nu\leq
\bigg(\frac{1}{\sigma(B)}\int_{B} k d\sigma\bigg)^{-2}
\bigg(\frac{1}{\sigma(B)}\int_{B} k^{1+\beta}
d\sigma\bigg)^{2/{1+\beta}}\leq 1+3\varepsilon'.
$$
Applying now Sarason's lemma (see \cite{s}) together with
John-Nirenberg's inequality guarantees that for $p\in [1,\infty)$,
if $s\leq {\gamma/8}$,
$$
\bigg(\frac{1}{\omega(B)}\int_{B}|\log k -
c_{B}|^{p}d\omega\bigg) ^{1/p}\leq
\frac{C_{p}}{\beta}{\varepsilon'}^{1/3},
$$
where $c_{B} = 1/{\omega(B)}\int_{B}\log k d\omega$.
From the theory of $A_{\infty}$-weights we have for some $p$ large
enough
$$
\bigg(\int_{B}k^{{-1}/{p-1}}\bigg)^{p-1}\leq C
\frac{\sigma(B)^{p}}{\omega(B)}.
$$
Thus applying H\"{o}lder's inequality we have
$$
\frac{1}{\sigma(B)}\int_{B}|\log k -
c_{B}|d\sigma\leq
C\bigg(\frac{1}{\omega(B)}\int_{B}|\log k -
c_{B}|^{p}d\omega\bigg) ^{1/p}\leq
C(\beta,p){\varepsilon'}^{1/3}.
$$
Choosing $\varepsilon'$ so that
$C(\beta,p){\varepsilon'}^{1/3}\leq {\varepsilon/2}$ we conclude
that for a surface ball $B$ with radius $s\leq {\gamma/8}$
$$
\frac{1}{\sigma(B)}\int_{B}|\log k-\bigg(\frac{1}{\sigma(B)}
\int_{B}\log k d\sigma\bigg)|d\sigma\leq \varepsilon.
$$
\end{proof}

\begin{cor}\label{cor5.3}
Let $L\in \mathcal L(\lambda,\Lambda,\alpha)$.
Let $\Omega\subset\R^{n+1}$ be a chord arc domain with vanishing
constant. Then for any $X\in\Omega$, $\log k_{X}\in
VMO(\partial\Omega)$.
\end{cor}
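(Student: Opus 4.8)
The plan is to deduce the statement from the optimal reverse H\"older inequality of Corollary~\ref{cor5.2}, via two reductions: first to a single, conveniently placed pole, and then from the global $(\delta,R)$-CAD hypothesis of that corollary to the purely local information supplied by the vanishing-constant assumption.

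First I would fix the pole. Recall that $\log k_X\in VMO(\partial\Omega)$ means
$$
\lim_{\rho\to 0}\ \sup_{Q_0\in\partial\Omega}\ \sup_{0<s\le\rho}\ \frac{1}{\sigma(B(Q_0,s))}\int_{B(Q_0,s)}\Big|\log k_X-\frac{1}{\sigma(B(Q_0,s))}\int_{B(Q_0,s)}\log k_X\,d\sigma\Big|\,d\sigma=0.
$$
For any two poles $X,X'\in\Omega$ one has $\log k_X-\log k_{X'}=\log\frac{d\omega^X}{d\omega^{X'}}$, and $\frac{d\omega^X}{d\omega^{X'}}(Q)=\lim_{Y\to Q}\frac{G(X,Y)}{G(X',Y)}$. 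Applying Theorem~\ref{Thm2.3} to $u=G(X,\cdot)$ and $v=G(X',\cdot)$ on a ball $B(Q_0,r)$ small enough to exclude $X$ and $X'$, together with Lemma~\ref{lem2.5} to bound this ratio above and below by positive constants, shows that $\log\frac{d\omega^X}{d\omega^{X'}}$ is bounded and H\"older continuous on $\partial\Omega$; in particular it belongs to $VMO(\partial\Omega)$. Hence $\log k_X\in VMO(\partial\Omega)$ holds for one $X\in\Omega$ if and only if it holds for all, so it suffices to prove it for a single pole $X_1$ with $\dist(X_1,\partial\Omega)=N$, $N>0$ fixed.

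Next I would localize. Since $\Omega$ is a chord arc domain with vanishing constant, \eqref{1.8} and \eqref{1.9} give, for every $\delta_1>0$, a radius $\rho_1=\rho_1(\delta_1)>0$ such that $\theta(r)\le\delta_1$ and $\sigma(B(Q_0,r))\le(1+\delta_1)\omega_n r^n$ for all $Q_0\in\partial\Omega$ and $0<r\le\rho_1$; that is, at scales $\le\rho_1$ the domain is indistinguishable from a $(\delta_1,\rho_1)$-CAD. The ingredients of Corollary~\ref{cor5.2} --- Semmes' decomposition in $\Delta(Q_0,Ms)$, the comparison with the Lipschitz domains $\Omega^{\pm}$, and the reverse H\"older inequality of Theorem~\ref{Thm5.1} --- are all performed at scales $Ms$ that we may take $\le\rho_1$, so they persist in this localized setting. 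Thus, given $\varepsilon>0$, I apply Corollary~\ref{cor5.2} with the chosen $N$ and with $\varepsilon/2$ in place of $\varepsilon$; it produces $\delta_0>0$ and, after localizing as above with $\delta_1=\delta_0$ so that $\rho_1$ plays the role of $R$, a scale $\gamma>0$ such that every surface ball $B$ of radius $s\le\min\{\gamma/2,\rho_1\}$ satisfies
$$
\frac{1}{\sigma(B)}\int_B\Big|\log k_{X_1}-\frac{1}{\sigma(B)}\int_B\log k_{X_1}\,d\sigma\Big|\,d\sigma\le\frac{\varepsilon}{2}<\varepsilon.
$$
Letting $\varepsilon\to 0$ gives $\log k_{X_1}\in VMO(\partial\Omega)$, and the pole reduction then yields $\log k_X\in VMO(\partial\Omega)$ for every $X\in\Omega$.

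The main obstacle is the localization step: Corollary~\ref{cor5.2} is stated for a domain that is globally a $(\delta,R)$-CAD with $\delta$ small, whereas a chord arc domain with vanishing constant is only a $(\delta,R)$-CAD for some, possibly large, $\delta$. One therefore has to verify that every estimate underlying Theorem~\ref{Thm5.1} uses only the geometry of $\partial\Omega$ in balls of radius $\lesssim Ms$, so that the vanishing-constant hypothesis can be fed in scale by scale --- precisely the mechanism by which \cite{kt1} passes from its small-constant reverse H\"older inequality to the $VMO$ statement for harmonic measure, the $C^{0,\alpha}$ regularity of the coefficients being what lets the $L$-estimates parallel the harmonic ones. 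A secondary point is bookkeeping with the pole: Corollary~\ref{cor5.2} requires $\dist(X,\partial\Omega)\ge N$, so to reach every $X\in\Omega$ (including poles near the boundary) one genuinely needs the pole-independence of $VMO$ membership established in the first step.
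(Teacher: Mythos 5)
Your proposal is correct and follows essentially the route the paper intends (the paper gives no explicit proof, presenting Corollary~\ref{cor5.3} as an immediate consequence of Corollary~\ref{cor5.2}). Two small remarks on the exposition. First, the pole-reduction step, while valid, is not actually needed: in Corollary~\ref{cor5.2} the number $N$ is a free parameter, so for an arbitrary $X\in\Omega$ you may simply take $N=\dist(X,\partial\Omega)$ from the outset and apply the corollary directly to that pole; the resulting scale $\gamma$ depends on $N$, but that is harmless for a $VMO$ conclusion, which only concerns the limit as the radius tends to zero. Second, the ``main obstacle'' you identify is not really there: by the very definition, a $(\delta,R)$-CAD is a condition on scales $r\le R$ only, so the vanishing-constant hypothesis --- $\theta(r)\to 0$ and $\sup_Q\sigma(B(Q,r))/(\omega_n r^n)\to 1$ as $r\to 0$ --- directly says that for every $\delta'>0$ there is an $R'>0$ making $\Omega$ a genuine $(\delta',R')$-CAD. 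No inspection of the internal estimates of Theorem~\ref{Thm5.1} is required; one simply feeds the smaller $R'$ into Corollary~\ref{cor5.2} and lets $\varepsilon\to 0$.
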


We now concentrate in the case when $L$ is perturbation of the Laplacian, i.e we assume that (\ref{normfkp}) holds. The crucial step in the proof of Theorem \ref{mainpertu} is to compare the $L$-elliptic measures of the Lipschitz domains $\Omega^{\pm}$ we constructed above.

\begin{lemma}\label{mainDJK}
Let $L$ be a uniformly elliptic operator in divergence form
satisfying the assumptions of Theorem \ref{mainpertu}. Let $\Gamma^{\pm}$ be
defined as above. Then there exists $\theta>0$ such that for
$Q\in\partial \Omega$ and $s>0$ if $E\subset \Gamma^+\cap\Gamma^-\cap
B(Q,s)$
\begin{equation}\label{label4.1}
\frac{\omega_-(E)}{\omega_-(\Delta_-)}\leq C\bigg(\frac{\omega_+(E)}{\omega_+(\Delta_+)}\bigg)^\theta
\end{equation}
where $\Delta_{\pm}=B(Q^{\pm},s)$ with $Q^{\pm}=(\Pi(Q),h^{\pm}(\Pi(Q)))$ and $\Pi$ is the projection in $\mathcal{L}(P,r)$. Here $\omega^{\pm}$ denote the $L-$harmonic measures of $\Omega^{\pm}\cap \mathcal{C}(P,2r)$ with pole outside $B(P,r)$.
\end{lemma}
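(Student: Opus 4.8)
The plan is to reduce \eqref{label4.1} to the statement that each of $\omega_{+}$ and $\omega_{-}$ is an $A_{\infty}$ weight with respect to surface measure on the corresponding Lipschitz domain, and then to chain the two resulting reverse H\"older inequalities across the common boundary $\Gamma^{+}\cap\Gamma^{-}$. Indeed $\Omega^{\pm}\cap\mathcal{C}(P,2r)$ are bounded Lipschitz domains with $\|\nabla h^{\pm}\|_{\infty}\le\eta\simeq\delta^{1/4}$, so their surface measures $\sigma_{\pm}$ are comparable to $\mathcal{H}^{n}$ on $\partial\Omega$; on $K:=\Gamma^{+}\cap\Gamma^{-}$ — which, by the sandwich $\Omega^{+}\cap\mathcal{C}(P,r)\subset\Omega\cap\mathcal{C}(P,r)\subset\Omega^{-}\cap\mathcal{C}(P,r)$, is contained in $\partial\Omega$ — one has $\sigma_{+}=\sigma_{-}$ exactly, and the surface balls $\Delta_{\pm}$ (of radius $\simeq s$) satisfy $\sigma_{\pm}(\Delta_{\pm})\simeq s^{n}$.

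The heart of the matter is to verify that $L$, viewed as an operator on each Lipschitz domain $\Omega^{\pm}\cap\mathcal{C}(P,2r)$, still satisfies a Carleson condition of the form \eqref{normfkp} relative to that domain, with constant controlled by the one for $\Omega$; Theorem~\ref{fkp1991} then gives $\omega_{\pm}\in A_{\infty}(d\sigma_{\pm})$, hence the self-improvement bounds
\[
\frac{\omega_{-}(E)}{\omega_{-}(\Delta_{-})}\lesssim\Big(\frac{\sigma_{-}(E)}{\sigma_{-}(\Delta_{-})}\Big)^{\theta_{1}}
\qquad\text{and}\qquad
\frac{\sigma_{+}(E)}{\sigma_{+}(\Delta_{+})}\lesssim\Big(\frac{\omega_{+}(E)}{\omega_{+}(\Delta_{+})}\Big)^{\theta_{2}}.
\]
To produce the Carleson condition on $\Omega^{+}\cap\mathcal{C}(P,2r)$ I would set $\delta^{+}(X):=\dist(X,\partial(\Omega^{+}\cap\mathcal{C}(P,2r)))$ and compare it with $\delta(X)=\dist(X,\partial\Omega)$: since $D[\Gamma^{+}\cap B(P,r);\partial\Omega\cap B(P,r)]\le\eta r$ we get $|\delta^{+}(X)-\delta(X)|\lesssim\eta r$ inside $B(P,r)$, so where $\delta^{+}(X)\gtrsim\eta r$ the two distances are comparable and the deviation function $a^{+}$ of $\Omega^{+}$ is dominated by a bounded thickening of the deviation function $a$ of $\Omega$, which transfers the Carleson integral up to a constant (by a Fubini/covering argument). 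On the remaining thin layer $\{\delta^{+}\lesssim\eta r\}$ one splits the base into $\Pi(K)$ — over which $\partial\Omega$ is squeezed between $\Gamma^{+}$ and $\Gamma^{-}$, so again $\delta^{+}\simeq\delta$ and $a^{+}\lesssim a$ — and its complement, whose $\mathcal{H}^{n}$-measure is at most $c_{1}\exp(-c_{2}/\eta)\omega_{n}r^{n}$ by \eqref{heart}; the contribution of the box over this exceptional set is estimated crudely (using $a^{+}\le C$) and absorbed thanks to the exponential smallness of $\eta\simeq\delta^{1/4}$.

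I expect this Carleson transfer to be the main obstacle, because it is precisely here that one must exploit the fine structure of Semmes' decomposition rather than just the measure bound: the mismatch between $\partial\Omega^{+}$ and $\partial\Omega$ near the bad set has to be shown not to overwhelm the Carleson average at every scale. Granting it, the conclusion is immediate: for $E\subset K\cap B(Q,s)$, using $A_{\infty}$ for $\omega_{-}$, then $\sigma_{-}(E)=\sigma_{+}(E)$ together with $\sigma_{\pm}(\Delta_{\pm})\simeq s^{n}$, then $\sigma_{+}\in A_{\infty}(\omega_{+})$,
\[
\frac{\omega_{-}(E)}{\omega_{-}(\Delta_{-})}\lesssim\Big(\frac{\sigma_{-}(E)}{\sigma_{-}(\Delta_{-})}\Big)^{\theta_{1}}\simeq\Big(\frac{\sigma_{+}(E)}{\sigma_{+}(\Delta_{+})}\Big)^{\theta_{1}}\lesssim\Big(\frac{\omega_{+}(E)}{\omega_{+}(\Delta_{+})}\Big)^{\theta_{1}\theta_{2}},
\]
which is \eqref{label4.1} with $\theta=\theta_{1}\theta_{2}$; the hypothesis that the pole lies outside $B(P,r)$ while $s\ll r$ ensures it is an admissible pole (at the appropriate corkscrew scale) for all the elliptic-measure estimates invoked, with constants independent of $Q$ and $s$.
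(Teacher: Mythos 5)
Your plan hinges on verifying that $L$ satisfies the Carleson condition \eqref{normfkp} relative to the Lipschitz domain $\Omega^{+}\cap\mathcal{C}(P,2r)$, so that Theorem~\ref{fkp1991} gives $\omega_{+}\in A_{\infty}(d\sigma_{+})$. That step cannot be carried out, and the paper flags this explicitly: in the proof of Theorem~\ref{mainpertu} the authors write that they \emph{do not know} whether $\omega_{+}\in A_{\infty}(d\sigma_{+})$. The obstruction is structural, not a technicality that Semmes' decomposition can repair. Since $\Omega^{+}\cap\mathcal{C}(P,2r)\subset\Omega\cap\mathcal{C}(P,2r)$, one has $\delta^{+}(X)=\dist(X,\partial\Omega^{+})\leq\delta(X)$ for $X\in\Omega^{+}$, so the Carleson density $a^{2}(X)/\delta^{+}(X)$ \emph{dominates} $a^{2}(X)/\delta(X)$ rather than being dominated by it — the inequality goes the wrong way. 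Concretely, take $Q^{+}\in\Gamma^{+}$ with $\dist(Q^{+},\partial\Omega)\simeq\eta r$ and a Carleson box $B(Q^{+},\rho)\cap\Omega^{+}$ with $\rho\ll\eta r$. Inside this box $\delta(X)\gtrsim\eta r$, so the Carleson hypothesis on $\Omega$ says nothing about the size of $a(X)$ there (it may be a fixed constant of order $\Lambda$), while $\int_{B(Q^{+},\rho)\cap\Omega^{+}}\delta^{+}(X)^{-1}\,dX$ diverges logarithmically as $\rho\to 0$. The exceptional-set argument you sketch does not help, because the problematic region is not confined to the measure-$c_{1}e^{-c_{2}/\eta}r^{n}$ bad set of \eqref{heart}; it lives over all of $\Pi(B(P,r))\setminus\Pi(K)$, and \eqref{heart} gives no pointwise smallness of $a$ at scales below $\eta r$. (Your observation that the transfer is easy for $\Omega^{-}$ is correct precisely because there the containment is reversed: $\delta^{-}\geq\delta$.)

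The lemma is in fact designed to \emph{avoid} needing $\omega_{+}\in A_{\infty}(d\sigma_{+})$. The paper's proof makes a direct, two-sided comparison of the elliptic measures of the sandwiched Lipschitz domains $\Omega^{\pm}$ in the spirit of Dahlberg--Jerison--Kenig \cite{djk}: one performs a Whitney decomposition of $\R^{n+1}\setminus(\Gamma^{+}\cap\Gamma^{-})$ into cubes $Q_{i}^{\pm}$, builds the auxiliary measure
$\mu(F)=\omega_{+}(F\cap G)+\sum_{i}\frac{\omega_{-}(F\cap Q_{i}^{-})}{\omega_{-}(Q_{i}^{*})}\omega_{+}(Q_{i}^{*})$,
proves the doubling-type comparison $\mu(F)/\mu(B(Q,s))\lesssim\omega_{-}(F)/\omega_{-}(B(Q,s))$ together with $\mu(B(P,r))\simeq 1$, and invokes the Coifman--Fefferman real-variable lemma \cite{CF}. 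This only uses boundary Harnack and Carleson estimates (CFMS), the maximum principle, and the doubling of $\omega_{\pm}$ — all valid for \emph{any} uniformly elliptic divergence-form operator with bounded measurable coefficients, with no $A_{\infty}$ input at all. The Carleson hypothesis on $L$ is used only later, in the proof of Theorem~\ref{mainpertu}, and only to deduce $\omega_{-}\in A_{\infty}(d\sigma_{-})$, which is the tractable side.
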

\begin{proof} The proof will follow the lines of Main Lemma in \cite{djk}. Let $G \subset \Gamma^+ \cap \Gamma^-\cap B(P,r)$ and denote by $Q^{\pm}= (q, h^{\pm}(q))$. If $X=(x, x_{n+1})$ then
$$\dist(X, \Gamma^{\pm} ) \simeq |x_{n+1} - h^{\pm}(x) |$$ and for $q$ such that
$h^{+}(q) \neq h^{-}(q)$
\begin{equation}\label{labelA}
h^{+}(q) - h^{-}(q) \simeq \dist(Q^{+}, \Gamma^{-}) \simeq \dist(Q^{-}, \Gamma^{+}).
\end{equation}
We proceed by constructing a Whitney decomposition of $\R^{n+1}\setminus G$. Extract a subfamily $\{Q_i^-\}$ such that $\Gamma^-\setminus G\subset \cup Q^{-}_i$. By (\ref{labelA}) note that $\dist(Q^{-}_i, \Gamma^{+})\simeq {\rm{diam}}\  Q^{-}_i$. Since ${\rm{Lip}}(h^+)\leq \eta$ and $\eta<<1$ there exist a family $\{Q_i^+\}$ obtained by vertical translations from $\{Q_i^-\}$ and such that $Q^-\in Q_i^-$ if and only if $Q^+\in Q^+_i$. Furthermore
$${\rm{diam}}\ Q^{-}_i ={\rm{diam}} \ Q^{+}_i \simeq \dist(Q^{+}_i, \Gamma^{-}) \simeq \dist(Q^{-}_i, \Gamma^{+})$$
and
$\Gamma^{+}\setminus G \subset \cup Q^{+}_i.$

We can find $Q^{*}_i$ such that $2Q^{-}_i \cup 2Q^{+}_i \subset Q^{*}_i$ and ${\rm{diam}}\ Q^{*}_i \simeq {\rm{diam}}\ Q^{\pm}_i.$
Next we define the measure $\mu$ for $F \subset \Gamma^{-} \cap \overline{B}(P, r)$ by
$$\mu(F)= \omega_{+}(F \cap G) + \sum_i \frac{\omega_{-}(F \cap Q^{-}_i)}{\omega_{-}(Q^{*}_i)}\omega_{+}(Q^{*}_i).$$
We will prove the following claim.

\textit{Claim.} If $Q\in\Gamma^-$ and $B(Q,s)\subset B(P,r)$ for $F \subset \Gamma^{-} \cap B(P, r)$ then
\begin{equation}\label{claim1}
\frac{\mu(F)}{\mu(B(Q,s))}\lesssim \frac{\omega_-(F)}{\omega_-(B(Q,s))}
\end{equation}
and
\begin{equation}\label{claim2}
\mu(B(P,r))\simeq 1.
\end{equation}
From the claim, using the real variable lemma of Coifman and Fefferman (see \cite{CF}) we conclude (\ref{label4.1}).

\textit{Proof of claim.} Let $Q \in \Gamma^{-} \cap \overline{B}(P, r)$ and $0<s \leq r.$ If for all $i$, $ Q^{-}_i \cap B(Q, s/2)= \emptyset$ then $B(Q,s/2)\subset G$, $Q\in \Gamma^+$ and by the doubling property of $\omega_+$
$$ \mu(B(Q, s))\geq \omega_{+}(B(Q, s/2) \cap G)=\omega_{+}(B(Q, s/2))\gtrsim \omega_{+}(B(Q,s)).$$ If there exists an $i$ such that
$Q^{-}_i \cap B(Q, s/2)\neq \emptyset$
by the doubling property of $\omega_+$
$$ \mu(B(Q, s))\gtrsim \omega_{+}(B(Q, s) \cap G)+\sum _{Q_i\cap B(Q, s/2) \neq \emptyset} \omega_{+}(Q^{*}_i).$$
Moreover, if $Q \in \Gamma^{+} \cap \Gamma ^{-},$ and $Q_i\cap B(Q, s/2) \neq \emptyset$ for some $i$,
$$\mu(B(Q, s))\gtrsim \omega_{+}(B(Q, s) \cap G)+\omega_{+}(B(Q, s/2)\setminus G)\gtrsim \omega_{+}(B(Q, s/2)).$$
Now if $Q \notin \Gamma^{+}$, $Q=Q^{-} \in Q^{-}_i$ (for the same $i$ as above) and $Q^{+} \in Q^{+}_i.$ In addition
$$ B(Q^{+}, s/4)\cap \Gamma^{+}\setminus G  \subset \cup 2Q^{+}_i\  {\rm{and}}\  Q^{-}_i\cap B(Q, s/2) \neq \emptyset$$
since for $(x, h^{+}(x)) \in B(Q^{+}, s/4) \cap \Gamma^{+}\setminus G$,
\begin{eqnarray*}
|(x, h^{+}(x))-(q, h^{-}(q))|
&\leq & |(x, h^{-}(x))-(x, h^{+}(x))|+|(x,h^{-}(x))-(q,h^{-}(q))|\\
&\lesssim & {\rm{diam}}\  Q_i^-+s/4+\eta s/4.
\end{eqnarray*}
Thus $\sum \omega_{+}(Q^{*}_i) \gtrsim \omega_{+}(B(Q^{+}, s/4)\setminus G) $ with $ Q^{-}_i \cap B(Q, s/2) \neq \emptyset$ and
$$\mu(B(Q,s)) \gtrsim \omega_{+}(B(Q,s)\cap G)+\omega_{+}(B(Q^{+},s/4)\setminus G)$$
for $Q\in\Gamma^-\setminus \Gamma^+$.

Now if $B(Q,s/(2\cdot 10^{6}))\cap G = \emptyset$, for all $X=(x,h^{+}(x))=(x,h^{-}(x))$ we have
$$|(q, h^{-}(q))-(x,h^{-}(x))|>s/(2\cdot 10^{6}).$$
Hence if $|q-x|<s/(2\cdot 10^{6})$ then
$$|x-q|\geq \frac{s}{2\cdot 10^{6}}-\frac{\eta s}{2\cdot 10^{6}}$$
and for $\eta$ small enough
$$|(q, h^{+}(q))-(x,h^{+}(x))|\geq |x-q|-\eta s/(2\cdot 10^{6})\geq \frac{s}{4\cdot 10^{6}}.$$
Thus by the doubling property of $\omega_+$,
$$ \mu(B(Q,s))\geq \omega_{+}(B(Q^{+},s/(4\cdot 10^{6})))\gtrsim  \omega_{+}(B(Q^{+},cs)\gtrsim  \omega_{+}(B(Q,cs)).$$
On the other hand, if $B(Q,s/(2\cdot 10^{6})) \cap G \neq \emptyset,$ then $B(Q^+,s/4)\subset B(Q,s)$ and again by the doubling property of $\omega_+$
$$\mu(B(Q,s))\geq \omega_{+}(B(Q^{+},s/4))\gtrsim \omega_{+}(B(Q,cs)).$$
Thus, in any case we have shown that
\begin{equation}\label{label**}
\mu(B(Q,s)) \gtrsim \omega_+(B(Q^+, s/4))\gtrsim \omega_{+}(B(Q,cs)).
\end{equation}

Let $Q \in \Gamma^{-}\cap B(P,r)$, $B(Q,s)\subset B(P,r)$ and consider two cases.

\underline{Case 1.} For every $i$, $Q^{-}_i \cap B(Q,s) \neq 0$ and $\rm{diam}\ Q^{-}_i \leq 100s.$

Then, $B(Q, Cs)\cap \Gamma^{+} \neq \emptyset.$ For simplicity, let $A= A^{+}(Q^{+}, Cs)=A^{-}(Q^{-}, Cs)$ the non-tangential points of $\Omega^{\pm}$ at $Q^{\pm}$ at radius $Cs$. Since $Q^{-}_i \cap B(Q,s) \neq 0,$
the distance of $Q=Q^{-}$ to $Q^{-}_i$ is less or equal to $s$ and $2Q^{-}_i \subset B(Q, Cs),$ so $Q^{*}_i \subset B(Q, Cs)$
and using the Carleson estimate in \cite{cfms} we have
\begin{equation}\label{labelDelta}
\frac{\omega^{A}_{\pm}(Q^{*}_i)}{\omega^{A}_{\pm}(B(Q,Cs))} \simeq \frac{\omega_{\pm}(Q^{*}_i)}{\omega_{\pm}(B(Q,Cs))}
\end{equation}
and
\begin{equation}\label{labelC}
\frac{\omega^{A}_{-}(F\cap Q^{-}_i)}{\omega_{-}^A(B(Q,Cs))} \simeq \frac{\omega_{-}(F\cap Q^{-}_i)}{\omega_{-}(B(Q,Cs))}.
\end{equation}
Similarly
\begin{equation}\label{labelD}
\frac{\omega^{A}_{+}(F\cap G)}{\omega_{+}^A(B(Q,Cs))} \simeq \frac{\omega_{+}(F\cap G)}{\omega_{+}(B(Q,Cs))}.
\end{equation}
Recall that $\omega_{\pm}^A(B(Q,Cs))\simeq 1$ thus (\ref{labelC}) and (\ref{labelD}) become
\begin{equation}\label{labelE}
\omega^{A}_{-}(F\cap Q^{-}_i)\simeq \frac{\omega_{-}(F\cap Q^{-}_i)}{\omega_{-}(B(Q,Cs))}
\end{equation}
and
\begin{equation}\label{labelDF}
\omega^{A}_{+}(F\cap G) \simeq \frac{\omega_{+}(F\cap G)}{\omega_{+}(B(Q,Cs))}.
\end{equation}
In addition, since $\omega_{+}$ is a doubling measure,
\begin{equation} \label{ena}
\omega^{A}_{+}(Q^{*}_i) \lesssim \omega^{A}_{+}(Q^{+}_i).
\end{equation}
If $Z \in Q^{+}_i,$ then $\omega^{Z}_{-}(Q^{*}_i) \simeq 1=\omega^{Z}_{+}(Q^{+}_i).$ For $Z \in \partial(\Omega^+\cap B(P,Cr))\setminus Q_i^+$, $\omega ^{Z}_-(Q^{*}_i)\geq 0$ and $\omega ^{Z}_{+}(Q^{+}_i)=0.$ Therefore by the maximum principle for $Z \in \partial(\Omega^+\cap B(P,Cr))$
\begin{equation}\label{dyo}
\omega ^{Z}_{+}(Q^{+}_i) \lesssim \omega ^{Z}_{-}(Q^{*}_i).
\end{equation}
From (\ref{ena}), (\ref{dyo}) we deduce that,
\begin{equation}\label{labelH}
\omega ^{A}_{+}(Q^{*}_i) \lesssim \omega ^{A}_{-}(Q^{*}_i).
\end{equation}
Thus, by (\ref{label**}), (\ref{labelDelta}), (\ref{labelC}), (\ref{labelE}) and (\ref{labelH}) we have
\begin{eqnarray*}
\frac{\mu(F)}{\mu(B(Q,s))} &\lesssim &
\frac{\omega_{-}(F\cap G)+\sum_i \frac{\omega_{-}(F\cap Q^{-}_i)}{\omega_{-}(Q^{*}_i)} \omega_{+}(Q^{*}_i)}{\omega_{+}(B(Q,Cs))}\\
&\lesssim &\omega^{A}_{+}(F\cap G)+\sum_i \frac{\omega^{A}_{-}(F\cap Q^{-}_i)}{\omega^{A}_{-}(Q^{*}_i)} \omega^{A}_{+}(Q^{*}_i)\\
&\lesssim &\omega^{A}_{-}(F\cap G)+\sum_i \omega^{A}_{-}(F\cap Q^{-}_i)\\
&\lesssim &\omega^{A}_{-}(F)\\
&\lesssim& \frac{\omega_{-}(F)}{\omega_{-}(B(Q,s))}.
\end{eqnarray*}
\underline{Case 2.} Suppose there exists $Q^{-}_i$ such that $Q^{-}_i \cap B(Q,s) \neq \emptyset$ and $\rm{diam}\  Q^{-}_i > 100s.$ This implies $B(Q,s)\cap G=\emptyset$ and $B(Q,s)\subset Q_i^*$.

If $Q^{-}_l \cap B(Q,s) \neq \emptyset,$
then, $\dist(Q^{-}_i, Q^{-}_l) \leq 2s,$ with ${\rm{diam}}\ Q^{-}_i \simeq {\rm{diam}}\ Q^{-}_l.$ Since $\omega_{-}$ and $\omega_{+}$
are doubling measures and $Q_i^*$, $Q_l^*$ have large overlaps, $\omega_{\pm}(Q^{*}_i) \simeq \omega_{\pm}(Q^{*}_l).$ Thus for $F\subset B(Q,s)$
\begin{eqnarray*}
\mu(F)&=& \sum_i \frac{\omega_{-}(F\cap Q^{-}_i)}{\omega_{-}(Q^{*}_i)} \omega_{+}(Q^{*}_i)\\
&\simeq &\frac{\omega_{+}(Q^{*}_l)}{\omega_{-}(Q^{*}_l)} \sum_{i}\omega_{-}(F \cap Q^{-}_i)\\
&\simeq &\frac{\omega_{+}(Q^{*}_l)}{\omega_{-}(Q^{*}_l)} \omega_{-}(F)
\end{eqnarray*}
and similarly
$$\mu(B(Q,s))\simeq \frac{\omega_+(Q_l^*)}{\omega_-(Q_l^*)}\omega(B(Q,s))$$
which yields
$$\frac{\mu(F)}{\mu(B(Q,s))} \simeq \frac{\omega_{-}(F)}{\omega_-(B(Q,s))}.$$
This concludes the proof of (\ref{claim1}) in the claim. To prove (\ref{claim2}) recall that by (\ref{heart})
$$\sigma(\Gamma^+ \cap \Gamma^-\cap B(P,r))\geq \bigg(1-c_1\exp\{-c_2/\eta\}\bigg)r^n\omega_n.$$
Hence by the doubling property of $\omega_+$
$$\mu(B(P,r))\geq \omega_+(B(P,r)\cap G)\gtrsim \omega_+(B(P,r))\simeq 1.$$
Clearly by the doubling character of $\omega_+$ we also have that $\mu(B(P,r))\lesssim 1$.
\end{proof}

\textit{Proof of Theorem \ref{mainpertu}.}

Choose $\delta(n)>0$ such that Semmes decomposition applies $\Omega$ as in Lemma 5.1 in \cite{kt1}. For $X\in
\Omega$ let $d=\dist(X,\partial \Omega)$ and $\omega^X=\omega$.
Let $0<r\le \min\{R/2. d/4\}$. For  $P\in \partial\Omega$
let $\Delta=\partial \Omega \cap B(P,r)$ and $A=A(P,r)$ be the non-tangential interior point of
$\Omega\cap\mathcal{C}(P,2r)$. We may assume that
$\Omega^+\cap \mathcal{C}(P,2r)\subset \Omega\cap \mathcal{C}(P,2r)$.
We denote by $\omega^+$ be the $L-$elliptic measure of
$\Omega^+\cap\mathcal{C}(P,2r)$.
Since for $E\subset\Delta$, $\omega^A (E)\simeq \omega(E)/\omega(\Delta)$
 It is enough to prove that for
$\alpha\in (0,1)$ small, there exists $\beta\in (0,1)$ so that if
$\omega^A(E)<\alpha$ then $\sigma(E)/\sigma(\Delta)<\beta$.

Assume that $\omega^A(E)<\alpha$. We decompose $E$ as in Lemma 5.2 of \cite{kt1}, $E=E_1\cup E_2$ where $E_1=E\cap \partial\Omega^+$ and $E_2=E\setminus \partial \Omega^+$. By the maximum principle $\omega_+^A(E_1)\leq \omega^A(E)<\alpha$. We write
\begin{equation}\label{Pertusemmes}
\frac{\sigma(E)}{\sigma(\Delta)}=\frac{\sigma(E_1)}{\sigma(\Delta)}+\frac{\sigma(E_2)}{\sigma(\Delta)}
\end{equation}

In this case we do not know if $\omega_+\in A_\infty(d\sigma_+)$. On the other hand since $\Omega^-$ is a Lipschitz domain and $L$
(extended to be the Laplacian in $\Omega^c$) is a perturbation of the Laplacian satisfying (\ref{normfkp}) we know that $\omega_-\in A_\infty(d\sigma_-)$ by Theorem \ref{fkp1991}.
Therefore there are positive constants $\gamma$, $C_1$, $C_2$ such that
\begin{equation}\label{tt3}
C_1\bigg(\frac{\sigma_-(E_1)}{\sigma_-(\Delta_-)}\bigg)^{1/\gamma}\leq \frac{\omega_-(E_1)}{\omega_-(\Delta_-)}\leq C_2\bigg(\frac{\sigma_-(E_1)}{\sigma_-(\Delta_-)}\bigg)^\gamma
\end{equation}
where $\Delta_-=\partial \Omega\cap B(\Pi(P),h^-(\Pi(P)),r\sqrt{1+\eta^2})$ and $\sigma_-$ denotes the surface measure of $\partial \Omega^-$. The first term of (\ref{Pertusemmes}) is estimated combining (\ref{tt3}) and Lemma \ref{mainDJK},
\begin{eqnarray}\label{ul-A}
\frac{\sigma(E_1)}{\sigma(\Delta)} &\leq& \frac{\sigma_-(E_1)}{\sigma_-(\Delta_-)}\frac{\sigma_-(\Delta_-)}{\sigma(\Delta)}\\
&\lesssim &\bigg(\frac{\omega_-(E_1)}{\omega_-(\Delta_-)}\bigg)^{\gamma}\frac{\sigma_-(\Delta_-)}{\sigma(\Delta)}\nonumber\\
&\lesssim & \bigg(\frac{\omega_+(E_1)}{\omega_+(\Delta_+)}\bigg)^{\theta'}\frac{\sigma_-(\Delta_-)}{\sigma(\Delta)}\nonumber\\
&\lesssim &\alpha^{\theta'}(1+\eta^2)^{n+1/2}(1+\eta)\nonumber
\end{eqnarray}
where $\theta'=\theta'(\theta,\gamma)$. Finally the second term of (\ref{Pertusemmes}) is controlled using the Semmes' Decomposition estimate for
chord arc domains with small constant (see \cite{kt1}, Theorem
2.2). That is,
\begin{equation}\label{ul-B}
\frac{\sigma(E_2)}{\sigma(\Delta)}\leq \frac{c_1\exp(-c_2/\eta)\omega_nr^n}{\sigma(\Delta)}\lesssim (1+\eta)\exp(-c_2/\eta).
\end{equation}
Combining (\ref{ul-A}) and (\ref{ul-B}), and choosing $\alpha>0$ and $\delta>0$ small enough(recall $\eta\simeq \delta^{1/4}$) we conclude that $\sigma(E)/\sigma(\Delta)<\beta<1.$ \qed

\end{document}